\newenvironment{salign}{
    \begin{equation}
    \begin{aligned}
}{
    \end{aligned}
    \end{equation}
    \ignorespacesafterend
}
\numberwithin{equation}{section}
\theoremstyle{plain}
\newtheorem{theorem}[subsubsection]{Theorem}
\newtheorem{lemma}[subsubsection]{Lemma}
\newtheorem{proposition}[subsubsection]{{Proposition}}
\newtheorem{corollary}[subsubsection]{{Corollary}}
\theoremstyle{definition}
\newtheorem{definition}[subsubsection]{{Definition}}
\newtheorem{example}[subsubsection]{{Example}}
\theoremstyle{remark}
\newtheorem{remark}[subsubsection]{{Remark}}
\newtheoremstyle{RepTheorem} 
	{\topsep}{\topsep}
	{\itshape}
	{}
	{\bfseries}
	{.}
	{ }
	{\thmname{#1}\thmnote{ \bfseries #3}}
\theoremstyle{RepTheorem}
\newtheorem{reptheorem}[subsubsection]{Theorem}
\newcommand{\defeq}{\stackrel{\textnormal{def}}{=}}
\newcommand{\Mod}[1]{\ (\mathrm{mod}\ #1)}
\def\Z {{\mathbb Z}}
\def\F {{\mathbb F}}
\def\Q {{\mathbb Q}}
\def\R {{\mathbb R}}
\def\C {{\mathbb C}}
\def\A {{\mathbb A}}
\def\P {{\mathbb P}}
\def\G {{\mathbb G}}
\def\O {{\mathcal O}}
\def\mcA {{\mathcal A}}
\def\mcB {{\mathcal B}}
\def\mcC {{\mathcal C}}
\def\mcD {{\mathcal D}}
\def\mcE {{\mathcal E}}
\def\mcF {{\mathcal F}}
\def\mcG {{\mathcal G}}
\def\mcI {{\mathcal I}}
\def\mcJ {{\mathcal J}}
\def\mcM {{\mathcal M}}
\def\mcP {{\mathcal P}}
\def\mcS {{\mathcal S}}
\def\mcX {{\mathcal X}}
\def\mcY {{\mathcal Y}}
\def \a {{\mathfrak a}}
\def \b {{\mathfrak b}}
\def \c {{\mathfrak c}}
\def \d {{\mathfrak d}}
\def \n {{\mathfrak n}}
\def \p {{\mathfrak p}}
\def \q {{\mathfrak q}}
\def \g {{\mathfrak g}}
\def \t {{\mathfrak t}}
\def \mfA {{\mathfrak A}}
\def \mfD {{\mathfrak D}}
\def \mfI {{\mathfrak I}}
\def \w {{\textnormal{\textbf{w}}}}
\def \Cl {{\textnormal{Cl}}}
\newcommand{\aff}{\textnormal{aff}}
\newcommand{\GL}{\textnormal{GL}}
\newcommand{\SL}{\textnormal{SL}}
\newcommand{\pr}{\textnormal{pr}}
\newcommand{\Aut}{\textnormal{Aut}}
\newcommand{\Br}{\textnormal{Br}}
\newcommand{\Gal}{\textnormal{Gal}}
\newcommand{\Hom}{\textnormal{Hom}}
\newcommand{\Ht}{\textnormal{Ht}}
\newcommand{\Pic}{\textnormal{Pic}}
\newcommand{\Spec}{\textnormal{Spec}}
\newcommand{\Val}{\textnormal{Val}}
\newcommand{\lcm}{\textnormal{lcm}}
\begin{document}

\title{Points of bounded height in images of morphisms of weighted projective stacks \\
\textnormal{\small With Applications to Counting Elliptic Curves}}

\author{Tristan Phillips}
\email{tristanphillips72@gmail.com}

\subjclass[2000]{Primary 11G05, 14G05; Secondary 11G07, 11G35, 11G50, 11D45, 14D23,14G40.}

\begin{abstract}
Asymptotics are given for the number of rational points in the domain of a morphism of weighted projective stacks whose images have bounded height and satisfy a (possibly infinite) set of local conditions. 
As a consequence we obtain results for counting elliptic curves over number fields with prescribed level structures, including the cases of $\Gamma(N)$ for $N\in\{1,2,3,4,5\}$, $\Gamma_1(N)$ for $N\in\{1,2,\dots,10,12\}$, and $\Gamma_0(N)$ for $N\in\{1,2,4,6,8,9,12,16,18\}$. In all cases we give an asymptotic with an expression for the leading coefficient, and in many cases we also give a power-saving error term.
\end{abstract}

\maketitle

\section{Introduction}


\subsection{Counting points on weighted projective stacks}

In 1979, Schanuel \cite{Sch79} proved an asymptotic for the number of rational points of bounded height on projective spaces over number fields, which later motivated Manin's Conjecture on rational points of Fano varieties \cite{FMT89}. This was generalized to weighted projective spaces by Deng in 1998 \cite{Den98}. Bruin and Najman noted that Deng's argument could be slightly modified to address the case of weighted projective stacks \cite{BN22}. In their doctoral thesis, Darda \cite{Dar21}, gives an alternative proof using height zeta functions and applies it to a general class of height functions. In a different direction, Bright, Browning, and Loughran \cite{BBL16} proved a generalization of Schanuel's theorem which allows one to impose infinitely many local conditions on the points of projective space being counted. Recently, Peter Bruin and Manterola Ayala \cite{BMA23, MA21} have generalized Deng's result to count points of weighted projective stacks whose images, with respect to a morphism of weighted projective stacks, are of bounded height in the codomain. In this article we give a common generalization of these results. 
Before stating this generalization, we introduce some notation.

For any set $S$, let $\# S$ denote the cardinality of $S$. Let $K$ be a degree $d$ number field over $\Q$.
 Let $\Val(K)$ denote the set of equivalence classes of non-trivial valuations of $K$ (i.e., \emph{places} of $K$).  
 Let $\Val_0(K)$ denote the set of finite (non-archimedean) places of $K$ and let $\Val_\infty(K)$ denote the set of infinite (archimedean) places of $K$.
 For each place $v\in \Val(K)$ let $K_v$ denote the completion of $K$ at $v$ and let $m_v$ denote the usual Haar measure on $K_v$, normalized so that $m_v$ agrees with Lebesgue measure at all real places and is twice as large as Lebesgue measure at all complex places. 
 Let $K_\infty\defeq \prod_{v\in \Val_\infty(K)} K_v$ and let $m_\infty$ denote the \emph{usual Haar measure} on $K_\infty$ obtained by taking the product of the Haar measures at the infinite places.
 
Let $\w'=(w'_0,\dots,w'_n)$ and $\w=(w_0,\dots,w_n)$ be $(n+1)$-tuples of positive integers. Let $|\w'|\defeq \sum w'_i$ and $\w'_{\min}\defeq \min\{w'_0,\dots,w'_n\}$.
Let $\mcP(\w')$ and $\mcP(\w)$ be the weighted projective stacks with weights $\w'$ and $\w$. 
 For a place $v\in \Val(K)$ and a subset $\Omega\subseteq \mcP(\w')(K_v)$, let $\Omega^{\aff}\subset K_v^{n+1}$ denote the subset of the affine cone of $\mcP(\w')(K_v)$ above $\Omega$ and let $\partial\Omega^{\aff}$ denote the boundary of the affine cone $\Omega^{\aff}$. 
 Let $\Ht_\w$ denote the height function on $\mcP(\w)$ defined in Subsection \ref{subsec:WPSheights}.

We now state our result for counting rational points in the domain of a morphism of weighted projective stacks whose image is of bounded height and where we impose finitely many local conditions.

\begin{theorem}\label{thm:WProjFin}
Let $\varphi:\mcP(\w')\to \mcP(\w)$ be a morphism of weighted projective stacks that has \emph{finite defect} (see Subsection \ref{subsec:defects} for the definition of finite defect). For each infinite place $v\in \Val_\infty(K)$, let $\Omega_v\subset \mcP(\w')(K_v)$ be such that $\Omega_v^{\aff}$ is bounded and definable in an o-minimal structure (see Subsection \ref{subsec:GSo-minimal} for the definition of o-minimal structure). Let $S\subset \Val_0(K)$ be a finite set of finite places. For each place $v\in S$, let $\Omega_v\subset\mcP(\w')(K_v)$ be a subset for which $\Omega_v^{\aff}$ is measurable and $m_v(\partial \Omega_v^{\aff})=0$.
Then we have that
\begin{align*}
&\#\{x\in \mcP(\w')(K):\Ht_\w(\varphi(x))\leq B,\ x\in \varphi(\Omega_v) \text{ for all } v\in S\cup\Val_\infty(K)\}\\
&\hspace{15mm} =\kappa B^{|\w'|/e(\varphi)}+ \begin{cases}
O\left(B^{\frac{1}{e(\varphi)}}\log(B)\right) &  {\substack{\text{ if } \w'=(1,1)\\ \text{and}\ K=\Q,}}\\
O\left(B^{\frac{d|\w'|-w'_{\min}}{d e(\varphi)}}\right) & \text{ else }
\end{cases}
\end{align*}
as a function of $B$, where $\kappa$ is an explicit constant (which we will give an expression for in Section \ref{sec:WPROJ}), and where $e(\varphi)\in \Z_{\geq 1}$ is the reduced degree of $\varphi$ (see Definition \ref{def:ReducedDegree}).
\end{theorem}

Combining this theorem with the geometric sieve, one obtains a version of this result in which infinitely many \emph{admissible} local conditions may be imposed (see Theorem \ref{thm:WProjInfty}). In this case the condition that $\Omega_v$ is \emph{defininable in an o-minimal structure} for $v\in \Val_\infty(K)$ can be relaxed to the condition that $\Omega_v$ is \emph{measurable} for $v\in \Val_\infty(K)$. This generalizes (and gives an alternative proof of) a case of a result of Darda, who proved this in the case $\varphi$ is the identity morphism \cite[Theorem 8.3.2.2]{Dar21}. Darda uses height zeta functions to count points on weighted projective stacks for a class of height functions he calls \textit{quasi-toric}. Although the height we are using can be viewed as the pullback of a quasi-toric height, it seems unlikely that the leading coefficient in Theorem \ref{thm:WProjFin} would exactly agree with that in Darda since the pullback height only equals the height of the image, $\Ht_{\w}(\phi(x))$, up to a constant. Darda also does not obtain error terms in any cases. 

The proof of Theorem \ref{thm:WProjFin} and Theorem \ref{thm:WProjInfty} make use of geometry-of-numbers techniques. The general ideas are similar to those in the papers cited prior to the theorem, but some care is required to achieve the generalizations simultaneously. We also streamline part of Schanuel's proof by using o-minimal geometry via a lattice counting theorem of Barroero and Widmer \cite[Theorem 1.3]{BW14}. In order to deal with infinitely many local conditions, we prove a weighted version of the geometric sieve for integral points on affine spaces over number fields (Proposition \ref{prop:KEk}), which may be of independent interest. 

A stacky version of the Batyrev--Manin Conjecture has recently been formulated \cite{ESZB23,DY24,DY25+}. It would be interesting to verify to what extent Theorem \ref{thm:WProjFin} agrees with what those conjectures predict, although we do not pursue that here.


Motivated by counting twists of elliptic curves, we use our results for counting elliptic curves satisfying infinite sets of admissible local conditions (Theorem \ref{thm:WProjInfty}) in order to prove a result for counting points on weighted projective stacks with respect to a certain \emph{twisted height} function (see Theorem \ref{thm:WProjTwisted}).

\subsection{Counting elliptic curves with prescribed level structure}

Our results for counting points on weighted projective stacks have immediate consequences for counting elliptic curves \textit{equipped} with a prescribed level structure. With some additional work we are able to count elliptic curves which \textit{admit} a prescribed level structure.

Let $N$ be a positive integer and let $G$ be a subgroup of $\GL_2(\Z/N\Z)$. An elliptic curve $E$ over a number field $K$ is said to \textbf{admit a $G$-level structure} if there is a $(\Z/N\Z)$-basis for the (geometric) $N$-torsion subgroup, $E[N](\overline{K})$, such that the image of the mod-$N$ Galois representation,
\[
\rho_{E,N}: \Gal(\overline{K}/K)\to \Aut(E[N](\overline{K}))\cong \GL_2(\Z/N\Z),
\]
is contained in $G$. In the identification $\Aut(E[N](\overline{K}))\cong \GL_2(\Z/N\Z)$ we view geometric $N$-torsion points $E[N](\overline{K})\cong \Z/N\Z \oplus \Z/N\Z$ as row vectors which are acted upon on the right by elements of $\GL_2(\Z/N\Z)$. 

For example, if $M|N$ are positive integers and
\[
G(M,N)=\left\{g\in \GL_2(\Z/N\Z) : g=
\begin{pmatrix}
\ast & \ast\\
0 & 1
\end{pmatrix}
\text{ and }
g\equiv
\begin{pmatrix}
\ast & 0\\
0 & 1
\end{pmatrix}
\Mod{M}\right\},
\]
then, when $K$ contains the cyclotomic extension $\Q(\zeta_N)$, elliptic curves over $K$ which admit a $G(M,N)$-level structure are those whose torsion subgroup contains $\Z/M\Z\times \Z/N\Z$.

 As another example, if $N$ is a positive integer and
\[
G_0(N)=\left\{g\in \GL_2(\Z/N\Z) : g=
\begin{pmatrix}
\ast & \ast\\
0 & \ast
\end{pmatrix}\right\},
\]
then elliptic curves which admit a $G_0(N)$-level structure are those which have a rational cyclic $N$-isogeny. 

For any level structure $G$, let $\Gamma_G$ denote the inverse image of $G$ with respect to the canonical map \[
\SL_2(\Z)\to \SL_2(\Z/N\Z) \hookrightarrow \GL_2(\Z/N\Z).
\] 
 Let $\mcY_G$ be the moduli stack parameterizing isomorphism classes of elliptic curves with $G$-level structure, and let $\mcX_G$ be the Deligne--Mumford compactification of $\mcY_G$. Let $K_G$ denote the subfield of the cyclotomic field $\Q(\zeta_N)$ fixed by the action of $G$ given by
\begin{align*}
G\times \Q(\zeta_N)&\to \Q(\zeta_N)\\
(g, \zeta_N)&\mapsto \zeta_N^{\det(g)}.
\end{align*} 
We will view $\mcY_G$ and $\mcX_G$ as algebraic stacks over $K_G$.

There are many examples of modular curves $\mcX_G$ which are isomorphic to weighted projective stacks $\mcP(w_0,w_1)$ (see Table \ref{tab:EllipticCount} for some examples).
As the compactified moduli stack of elliptic curves, $\mcX_{\GL_2(\Z)}$, is isomorphic to the weighted projective stack $\mcP(4,6)$, the height $\Ht_{(4,6)}$ on $\mcP(4,6)$ induces a height on elliptic curves, and the 12-th power of this height coincides with the usual \textbf{naive height} of an elliptic curve, defined as follows: If $E$ is an (isomorphism class of) elliptic curve(s) over $K$ with reduced integral short Weierstrass equation $y^2=x^3+Ax+B$, then the naive height of $E$ is defined to be
\[
\Ht(E)\defeq \prod_{v\in \Val_\infty(K)} \max\{|A|_v^3, |B|_v^2\}=\Ht_{(4,6)}([A:B])^{12}.
\]

As a consequence of Theorem \ref{thm:WProjFin}, we obtain the following result for counting elliptic curves admitting prescribed level structures.

\begin{theorem}\label{thm:EllipticCount}
 Let $G$ be a subgroup of $\GL_2(\Z/N\Z)$ and let $K$ be a degree $d$ number field containing $K_G$. Suppose that for some pair of positive integers $\w=(w_0,w_1)$, there is an isomorphism of $K_G$-stacks from the modular curve $\mcX_G$ to the weighted projective stack $\mcP(\w)$. Suppose that the morphism that forgets the level structure, $\varphi_G:\mcX_G\to \mcX_{\GL_2(\Z)}$, satisfies $(w_0, w_1)=1$ or $e(\varphi_G))=1$. 
Then the number of isomorphism classes of elliptic curves over $K$ which admit a $G$-level structure and have naive  height less than $B$ is
\[
\kappa_G B^{\frac{|\w|}{12 e(\varphi_G)}}+O\left(B^{\frac{d|\w|-w_{\min}}{12 e(\varphi_G) d}}\right),
\]
where $\kappa_G$ is an explicit constant which we give an expression for in Section \ref{sec:EC}.
\end{theorem}


One can show that under the assumptions of Theorem \ref{thm:EllipticCount},
\[
e(\varphi_G)=\frac{w_0w_1}{24}[\SL_2(\Z):\Gamma_G],
\]
 and that $\varphi_G$ has finite defect if $(w_0,w_1)=(1,1)$ or $e(\varphi_G)=1$ (see Proposition \ref{prop:finite-defect}).

Table \ref{tab:EllipticCount} displays all level structures of the form $G(M,N)$ or $G_0(N)$ in which Theorem \ref{thm:EllipticCount} applies (see also \cite[Table 1]{BN22}, which contains some additional cases where the Theorem applies). 
 In particular, the theorem applies to the torsion subgroups in Mazur's classification of the possible torsion subgroups for elliptic curves over $\Q$, which correspond to the first 15 rows of Table \ref{tab:EllipticCount}.
Note that for all $M|N$ we have $K_{G(M,N)}=\Q$ and $K_{G_0(N)}=\Q$. In particular, for $G(M,N)$-level structures and $G_0(N)$-level structures, Theorem \ref{thm:EllipticCount} applies for $K$ an arbitrary number field.

\begin{table}
\centering
\begin{tabular}{cccccc}
\hline
$G$ & $\Gamma_G$ & $(w_0,w_1)$ & $e(\varphi_G)$ & $\frac{w_0+w_1}{12 e(G)}$ & $\frac{d(w_0+w_1)-w_{\min}}{12 d e(G)}$\\
\hline
$G(1,1)=G_0(1)$ & $\Gamma(1)=\SL_2(\Z)$ & $(4,6)$ & $1$ & $5/6$ & $5/6-1/3d$\\
$G(1,2)=G_0(2)$ & $\Gamma_1(2)=\Gamma_0(2)$ & $(2,4)$ & $1$ & $1/2$ & $1/2-1/6d$\\
$G(1,3)$ & $\Gamma_1(3)$ & $(1,3)$ & $1$ & $1/3$ & $1/3-1/12d$\\
$G(1,4)$ & $\Gamma_1(4)$ & $(1,2)$ & $1$ & $1/4$ & $1/4-1/12d$\\
$G(1,5)$ & $\Gamma_1(5)$ & $(1,1)$ & $1$ & $1/6$ & $1/6-1/12d$\\
$G(1,6)$ & $\Gamma_1(6)$ & $(1,1)$ & $1$ & $1/6$ & $1/6-1/12d$\\
$G(1,7)$ & $\Gamma_1(7)$ & $(1,1)$ & $2$ & $1/12$ & $1/12-1/24d$\\
$G(1,8)$ & $\Gamma_1(8)$ & $(1,1)$ & $2$ & $1/12$ & $1/12-1/24d$\\
$G(1,9)$ & $\Gamma_1(9)$ & $(1,1)$ & $3$ & $1/18$ & $1/18-1/36d$\\
$G(1,10)$ & $\Gamma_1(10)$ & $(1,1)$ & $3$ & $1/18$ & $1/18-1/36d$\\
$G(1,12)$ & $\Gamma_1(12)$ & $(1,1)$ & $4$ & $1/24$ & $1/24-1/48d$\\
$G(2,2)$ & $\Gamma(2)$ & $(2,2)$ & $1$ & $1/3$ & $1/3-1/6d$\\
$G(2,4)$ & $\Gamma(2,4)$ & $(1,1)$ & $1$ & $1/6$ & $1/6-1/12d$\\
$G(2,6)$ & $\Gamma(2,4)$ & $(1,1)$ & $2$ & $1/12$ & $1/12-1/24d$\\
$G(2,8)$ & $\Gamma(2,8)$ & $(1,1)$ & $4$ & $1/24$ & $1/24-1/48d$\\
$G(3,3)$ & $\Gamma(3)$ & $(1,1)$ & $1$ & $1/6$ & $1/6-1/12d$\\
$G(3,6)$ & $\Gamma(3,6)$ & $(1,1)$ & $3$ & $1/18$ & $1/18-1/36d$\\
$G(4,4)$ & $\Gamma(4)$ & $(1,1)$ & $2$ & $1/12$ & $1/12-1/24d$\\
$G(5,5)$ & $\Gamma(5)$ & $(1,1)$ & $5$ & $1/30$ & $1/30-1/60d$\\
$G_0(4)$ & $\Gamma_0(4)$ & $(2,2)$ & $1$ & $1/3$ & $1/3-1/6d$\\
\hline
\end{tabular}
\caption{Cases of Theorem \ref{thm:EllipticCount}}
\label{tab:EllipticCount}
\end{table}

Table \ref{tab:ModularCurvesWPS} gives lower bounds for the number of modular curves over $\Q$ isomorphic to various weighted projective stacks. Determining if a modular curve $\mcX_G$ is isomorphic to a weighted projective stack over $\Q$ amounts to determining the following:
\begin{itemize}
	\item the genus of $\mcX_G$,
	\item if $\mcX_G$ has a $\Q$ point,
	\item the number of order $2$ elliptic points of $\mcX_G$,
	\item the number of order $3$ elliptic points of $\mcX_G$, and
	\item the number of irregular cusps of $\mcX_G$.
\end{itemize}
Based on data from the L-functions and modular forms database (LMFDB) \cite[\href{https://beta.lmfdb.org/ModularCurve/Q/}{Modular curves over $\Q$}]{lmfdb}, there are $11567$ genus zero modular curves of level $\leq 50$ over $\Q$, and of those, at least $7105$ have a $\Q$-point. For these modular curves, the LMFDB gives the elliptic points, and the irregular cusps can be found using a GitHub repository of Zywina \cite{ZywinaGithub,Zyw25+} . When $\mcX_G\cong \mcP(1,1)$, we have $(w_0,w_1)=(1,1)$, so Theorem \ref{thm:EllipticCount} applies. In order to determine how often Theorem \ref{thm:EllipticCount} applies  when $(w_0,w_1)\neq (1,1)$, we compute $e(\varphi_G)$, which amounts to computing the index of $G$ in $\GL_2$. For example, when $\mcX_G\cong \mcP(2,2)$, in order for Theorem \ref{thm:EllipticCount} to apply, we must have that the index of $G$ in $\GL_2$ equals $6$.  

\begin{table}
\centering
\begin{tabular}{ccccccc}
\hline
 & $\mcP(1,1)$ & $\mcP(1,2)$ & $\mcP(1,3)$ & $\mcP(2,2)$ & $\mcP(2,6)$ & $\mcP(4,6)$\\
\hline
Total & $2568$ & $44$ & $16$ & $398$ & $1$ & $1$\\
Theorem \ref{thm:EllipticCount} applies& $2568$ & $44$ & $16$ & $38$ & $0$ & $1$\\
Theorem \ref{thm:EllipticTwistCount} applies&  &  &  & $398$  &  & \\
\hline
\end{tabular}
\caption{Modular curves over $\Q$ of level $\leq 50$ isomorphic to a weighted projective stack.}
\label{tab:ModularCurvesWPS}
\end{table}

 When the morphism $\varphi_G$ does not have finite defect, the problem of counting elliptic curves which admit a $G$-level structure becomes more difficult. Nevertheless, we are able to prove the following result in the case that $\mcX_G\cong \mcP(2,2)$, which does not assume the morphism $\mcX_G\to \mcX_{\GL_2(\Z)}$ forgetting the level structure has finite defect.
 
\begin{theorem}\label{thm:EllipticTwistCount}
Let $G$ be a subgroup of $\GL_2(\Z/N\Z)$.  Suppose that there is an isomorphism $\mcX_G\cong \mcP(2,2)$ defined over $K_G$. 
Define 
\begin{align*}
e'(G)\defeq \frac{[\SL_2(\Z):\Gamma_{G}]}{12}.
\end{align*}
Let $K$ be a number field of degree $d$ containing $K_G$.
Then there exists explicit, non-zero constants $\kappa_1$ and $\kappa_2$, such that the number of isomorphism classes of elliptic curves over $K$ admitting a $G$-level structure and which have naive height less than $B$ is asymptotic to
\[
\begin{cases}
\kappa_1 B^{1/6}\log(B) & \text{ if } e'(G)=1,\\
\kappa_2 B^{1/6} & \text{ if } e'(G)>1.
\end{cases}
\]
Expressions for the constants $\kappa_1$ and $\kappa_2$ will be given in Section \ref{sec:WPROJ}.
\end{theorem}

Although the condition $\mcX_G\cong \mcP(2,2)$ in Theorem \ref{thm:EllipticTwistCount} may seem restrictive, it turns out that any modular curve isomorphic to a $\mu_2$-gerbe over $\P^1$ must be isomorphic to $\mcP(2,2)$ (see Proposition \ref{prop:mu_2-gerbe_K-rational} and Proposition \ref{prop:mu_2-gerbe_non-trivial}).

In reference to Table \ref{tab:ModularCurvesWPS}, we see that Theorem \ref{thm:EllipticCount} and Theorem \ref{thm:EllipticTwistCount} together give asymptotics for counting elliptic curves over $\Q$ with $G$-level structure by naive height for nearly all $G$ of level bounded by $50$ and $\mcX_G$ isomorphic to a weighted projective stack, with the only exception being when $\mcX_G\cong \mcP(2,6)$. The single case when $\mcX_G\cong \mcP(2,6)$ is $G=G_0(3)$, which is dealt with in \cite{PPV20} over $\Q$ (see Remark \ref{rem:X0(3)} for further discussion).  

Theorem \ref{thm:EllipticTwistCount} allows one to count $K$-rational cyclic $N$-isogenies for $N\in \{6,8,9,12,16,18\}$, and Table \ref{tab:EllipticTwistCount} gives the values for $e'(G)$ in these cases. 

\begin{table}[h]
\centering
\begin{tabular}{ccc}
\hline
$G$ & $\Gamma_G$ &  $e'(G)$\\
\hline
$G_0(6)$ & $\Gamma_0(6)$ & $1$\\
$G_0(8)$ & $\Gamma_0(8)$ & $1$\\
$G_0(9)$ & $\Gamma_0(9)$ & $1$\\
$G_0(12)$ & $\Gamma_0(12)$ & $2$\\
$G_0(16)$ & $\Gamma_0(16)$ & $2$\\
$G_0(18)$ & $\Gamma_0(18)$ & $3$\\
\hline
\end{tabular}
\caption{Cases of Theorem \ref{thm:EllipticTwistCount}}
\label{tab:EllipticTwistCount}
\end{table}

Theorem \ref{thm:EllipticCount} and Theorem \ref{thm:EllipticTwistCount} recover and extend a number of previously known results, which we now briefly survey. 

Over $\Q$, many cases of Theorem \ref{thm:EllipticCount} were previously proven: In 1992, using a basic lattice point counting argument together with M\"obius inversion, Brumer counted isomorphism classes of elliptic curves (i.e., the case of $G(1,1)$-level structure) \cite{Bru92}. In 2000, Grant counted isomorphism classes of elliptic curves which admit a $G(1,2)$-level structure \cite{Gra00}; this was done by exploiting several properties of elliptic curves containing a rational two-torsion point, and ultimately reducing the problem to counting lattice points in a semi-algebraic region.  Harron and Snowden counted isomorphism classes of elliptic curves which admit a $G(1,3)$-level structure \cite{HS17}; they gave a general strategy for reducing such problems to lattice point counting problems which can be addressed using the Principle of Lipschitz \cite{Dav51} (see also \cite[VI. \S 2. Theorem 2]{Lan94}). The method of Harron and Snowden allowed them to give a uniform proof of the $G(1,1)$, $G(1,2)$, and $G(1,3)$ cases. Cullinan, Kenney, and Voight showed that the method of Harron and Snowden could be applied more widely, and doing so were able to count isomorphism classes of elliptic curves for all level structures $G$ for which $\mcX_G\cong \mcP(1,1)$, and also for the cases of $G(2,2)$-level structure and $G_0(4)$-level structure \cite{CKV22}. For other results on counting 4-isogenies see \cite{PS21}.

Over the Gaussian numbers, $\Q(i)$, Zhao \cite{Zha20} has proven results on counting elliptic curves with prescribed torsion, using methods similar to those of Harron and Snowden. Our Theorem \ref{thm:EllipticCount} extends these results (by also giving a leading coefficient and error term) in the cases for which the underlying modular curve has genus zero. However, an interesting feature of Zhao's work is that he also considers cases in which the underlying modular curve has genus one (and positive rank). Such cases lie outside the scope of this paper, but studying points of bounded height on genus one modular curves with positive rank over more general number fields would be an interesting avenue for future research, and could perhaps be related to work in Iwasawa theory on the growth of Mordell--Weil ranks in towers of number fields.

  Over arbitrary number fields, Bruin and Najman \cite{BN22} showed that, if there is a representable morphism between a genus zero modular curve $\mcX_G$ and the weighted projective stack $\mcP(w_0,w_1)$, then the asymptotic growth rate for counting isomorphism classes of elliptic curves with prescribed torsion is $\frac{w_0+w_1}{12e(G)}$ (where $e(G)$ is as in Theorem \ref{thm:EllipticCount}). A key idea of Bruin and Najman is to exploit the fact that many genus zero modular curves are weighted projective stacks. Our results will also exploit this relation with weighted projective stacks, though we take a more direct strategy than Bruin and Najman, which allows us to compute the leading coefficient and, in many cases, a power-saving error term.

For the the level structures listed in Table \ref{tab:EllipticTwistCount}, Boggess and Sankar have given the growth rates for counting isomorphism classes of elliptic curves over $\Q$ \cite{BS24}. What typically makes counting elliptic curves with an $N$-isogeny more difficult than counting elliptic curves with a prescribed torsion subgroup, is that the morphism $\mcX_{G_0(N)}\to\mcX_{\GL_2(\Z)}$ rarely has finite defect (the exceptions being $N\in\{1,2,4\}$). This occurs more frequently because the moduli stack $\mcX_{G_0(N)}$ has generic inertia stack $B\mu_2$ (since $-I\in \Gamma_0(N)$). Boggess and Sankar are able to deal with this for some $N$ by constructing a double cover $\mcX_{G_{1/2}(N)}$ of $\mcX_{G_0(N)}$ for which every elliptic curve that admits a $G_0(N)$-level structure has a unique twist which admits a $G_{1/2}(N)$-level structure \cite[Lemma 2.1]{BS24}.\footnote{The $\mcX_{G_{1/2}(N)}$ are \textit{quadratic refinements} of the $\mcX_{G_0(N)}$, i.e., $G_0(N)=\langle -I,G_{1/2}(N)\rangle$.} Then they count twists of elliptic curves admitting a $G_{1/2}$-level structure by using an elementary (but complicated) counting argument, which generalizes a similar argument appearing in the work of Harron and Snowden. Theorem \ref{thm:EllipticTwistCount} not only extends the results of \cite{BS24} for counting elliptic curves over $\Q$ with $N$-isogenies for $N\in \{6,8,9,12,16,18\}$ to elliptic curves over arbitrary number fields, but also provides a leading coefficient for the asymptotics, which refines their result for counting elliptic curves over $\Q$. 

There are some results for counting elliptic curves over $\Q$ with prescribed level structure which we were not able to generalize to arbitrary number fields, but are worth mentioning. Pomerance, Pizzo, and Voight \cite{PPV20} gave a very precise asymptotic for counting elliptic curves which admit a $G_0(3)$-level structure, which gives several lower order terms. In \cite[\S 5]{BS24}, Boggess and Sankar  develop a method (different from their method mentioned earlier) for counting elliptic curves admitting a $G_0(N)$-level structure, which makes use of the structure of the ring of modular forms. With this method they are able to give an asymptotic growth rate in the cases that $N\in\{2,3,4,5,6,8,9\}$. The case of $G_0(5)$-level structure is particularly interesting since the modular curve $\mcX_{G_0(5)}$ has two (quadratic) elliptic points. 

Since the initial draft of this article, several more relevant results have appeared. In \cite{MV23} Molnar and Voight introduce several new ideas in order to count elliptic curves over $\Q$ with $G_0(7)$-level structure. These ideas were further explored in in Molnar's thesis \cite{Mol23}, where asymptotics (with power-saving error) are given for counting elliptic curves with $G_0(N)$-level structure for $G\in\{6,7,8,9,10,12,13,16,18,25\}$. This improves upon Theorem \ref{thm:EllipticTwistCount} when $K=\Q$ in these cases by providing an error term, and in some cases a secondary term as well. Further work of Arango-Piñeros, Han, Padurariu, and Park find an asymptotic for counting elliptic curves over $\Q$ with $G_0(5)$-level structure which gives three leading terms and an error term \cite{APHPP25+}.

In \cite{CJP24+}, Cho, Jeong, and Park build off the results of this article to conditionally bound the average analytic rank of elliptic curves with prescribed level structure over number fields.


\begin{remark}
By Faltings's Theorem, if the coarse space $X_G$ of $\mcX_G$ contains infinitely many $K$-rational points, then $X_G$ must have genus zero or genus one. In this article we have focus on the genus zero case.
The case in which $X_G$ has genus one is interesting in its own right, but would likely need to be studied using different methods than those presented here. In particular, the number of rational points on a genus one curve is largely dictated by its Mordell-Weil rank. For example, the number of points of bounded height (with respect to the canonical bundle) on an elliptic curve $E$ is $\sim \log(B)^{r/2}$, where $r$ is the rank of $E$. 
\end{remark}

Finally, we mention work of Olechnowicz, which counts points of bounded height in the image of a morphism of projective spaces, $\varphi:\P^n\to\P^m$ \cite{Ole24+}. We suspect that Theorem \ref{thm:WProjFin} can be modified to allow morphisms between weighted projective stacks of different dimensions. Olechnowiz's gives applications of his results to arithmetic dynamics \cite{Ole23+}, see also the work of Siu \cite{Siu24+} for further results in this direction.

\subsection{Organization}

In Section \ref{sec:Weighted-Projective-Spaces} we recall and prove some facts about weighted projective stacks, focusing on morphisms between weighted projective stacks and heights defined on weighted projective stacks. 
In Section \ref{sec:Geometric-Sieve} we prove several results for counting points on affine spaces over number fields satisfying prescribed local conditions. 
In Section \ref{sec:WPROJ} we prove our results for counting points of bounded height on weighted projective stacks. This is where we prove Theorem \ref{thm:WProjFin} and Theorem \ref{thm:WProjTwisted}.
In Section \ref{sec:EC} we apply the results of Section \ref{sec:WPROJ} to count points of bounded height on genus zero modular curves. This is where we prove our results for counting elliptic curves with prescribed level structures, Theorem \ref{thm:EllipticCount} and Theorem \ref{thm:EllipticTwistCount}.

\subsection{Acknowledgements}
The author would like to thank Brandon Alberts, Peter Bruin, Peter Cho, Jordan Ellenberg, Keunyoung Jeong, Robert Lemke Oliver, Irati Manterola Ayala, Grant Molnar, Filip Najman, Matt Olechnowicz, and John Voight for helpful conversations and comments on this work. The author is also grateful for the anonymous referee's thoughtful comments and corrections. Special thanks to Bryden Cais and Doug Ulmer for their encouragement during this project. 
During the completion of this work the author was supported by the National Science Foundation, via grant DMS-2303011.

\section{Preliminaries on weighted projective stacks}\label{sec:Weighted-Projective-Spaces}

In this section we recall facts about weighted projective stacks with an emphasis on morphisms between weighted projective stacks and heights on weighted projective stacks.

\subsection{Weighted projective stacks}

Let $\A^n$ denote affine $n$-space and let $\mathbb{G}_m$ denote the multiplicative group scheme. Given an $(n+1)$-tuple of positive integers $\w=(w_0,\dots,w_n)$, the \textbf{weighted projective stack} $\mcP(\w)$ is defined to be the quotient stack
\[
\mcP(\w)\defeq [(\A^{n+1}-\{0\})/\mathbb{G}_m],
\]
where $\mathbb{G}_m$ acts on $\A^{n+1}-\{0\}$ by the \textbf{$\w$-weighted action}, 
\begin{align*}
\mathbb{G}_m\times(\A^{n+1}-\{0\}) &\to (\A^{n+1}-\{0\})\\
(\lambda,(x_0,\dots,x_n)) &\mapsto \lambda\ast_\w (x_0,\dots,x_n)\defeq (\lambda^{w_0}x_0,\dots, \lambda^{w_n}x_n).
\end{align*}
For example, when $\w=(1,\dots,1)$ we have that $\mcP(\w)$ is the usual projective space $\P^n$. 

The algebraic stack $\mcP(\w)$ is
 smooth (since it is a quotient stack of a smooth scheme by a smooth group scheme) and
proper (by the valuative criterion for stacks). The point $[(a_0,\dots,a_n)]\in\mcP(w_0,\dots,w_n)$ has stabilizer $\mu_m$ where $m=\gcd(w_i : a_i\neq 0)$. When $K$ is a field of characteristic zero we have that $\mu_m$ is finite and reduced over $K$. It follows that, over fields of characteristic zero, $\mcP(w_0,\dots,w_n)$ is a Deligne--Mumford stack. 

For any field $F$ let $\mcP(\w)(F)$ denote the set of isomorphism classes of the groupoid of $F$-points of $\mcP(\w)$. More concretely, $\mcP(\w)(F)$ is in bijection with the quotient
\[
\mcP(\w)(F)=(F^{n+1}-\{0\})/F^\times
\]
where $F^\times$ acts on $F^{n+1}-\{0\}$ by the weighted action
\begin{align*}
F^\times \times (F^{n+1}-\{0\}) &\to (F^{n+1}-\{0\})\\
(\lambda, (a_0,\dots,a_n)) &\mapsto (\lambda^{w_0}a_0,\dots,\lambda^{w_n}a_n).
\end{align*}

Throughout this article, unless otherwise stated, we will assume all weighted projective stacks $\mcP(\w)$ are defined over a number field $K$. 

\subsection{Morphisms of weighted projective stacks}\label{subsec:WPSmorphisms}

\sloppy Let $\w=(w_0,\dots,w_n)$ and $\w'=(w'_0,\dots,w'_n)$ be $(n+1)$-tuples of positive integers and let $\mcP(\w)$ and $\mcP(\w')$ be the corresponding weighted projective stacks over $K$. Let $\Hom_K(\mcP(\w'),\mcP(\w))$ denote the set of morphisms of stacks from $\mcP(\w')$ to $\mcP(\w)$ defined over $K$.

Let $K[x'_0,\dots,x'_n]$ be the graded $K$-algebra where $x'_i$ has weight $w'_i$ for each $i$. For each $i\in\{0,\dots, n\}$ let $f_i\in K[x'_0,\dots,x'_n]$, and let $f=(f_i)_{i=0}^n$. If there exists an $e(f)\in \Z_{\geq 0}$ such that each $f_i$ is a weighted homogeneous polynomial of degree $e(f)w_i$, and if the homogeneous ideal $\sqrt{(f_0,\dots,f_n)}\subseteq K[x'_0,\dots,x'_n]$ contains $(x'_0,\dots,x'_n)$, then (by the same argument as \cite[Lemma 4.1]{BN22}) the graded ring homomorphism
\begin{align*}
K[x_0,\dots,x_n]&\to K[x'_0,\dots,x'_n]\\
x_i &\mapsto f_i
\end{align*}
induces a morphism of weighted projective stacks
\[
\varphi_f:\mcP(\w')\to \mcP(\w).
\]

Let $\Phi_{\w',\w}(K)$ denote the set of $f=(f_i)_{i=0}^n$ with 
\begin{equation*}
f_i\in K[x_0',\dots,x'_n]\qquad \text{ and }\qquad (x_0',\dots, x_n') \subseteq \sqrt{(f_0,\dots, f_n)}.
\end{equation*}
 There is a natural (weighted) action of $\G_m(K)$ on $\Phi_{\w',\w}(K)$, given by
\begin{align*}
\G_m(K)\times \Phi_{\w',\w}(K)&\to \Phi_{\w',\w}(K)\\
(\lambda, (f_i)) & \mapsto \lambda\ast_{\w} (f_i)=(\lambda^{w_i} f_i).
\end{align*} 
The following theorem gives a description of morphisms between weighted projective stacks:
 
\begin{theorem}\label{thm:WProjMorphisms}
Using the notation from above, there is a bijection
\begin{align*}
\Phi_{\w',\w}(K)/\G_m(K) &\to \Hom_K(\mcP(\w'),\mcP(\w))\\
f=(f_i) &\mapsto \varphi_f.
\end{align*}
\end{theorem}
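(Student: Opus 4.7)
The plan is to verify the claimed bijection by independently checking well-definedness, surjectivity, and injectivity, leveraging the universal property of the quotient stack presentation $\P(\w)=[(\A^{n+1}\setminus\{0\})/\G_m]$.

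Well-definedness is straightforward: replacing $f=(f_i)$ with $\lambda\ast_\w f=(\lambda^{w_i}f_i)$ amounts to post-composing the induced morphism on affine cones $\A^{n+1}\setminus\{0\}\to \A^{n+1}\setminus\{0\}$ with the $\G_m$-action of $\lambda$ on the target, and $\P(\w)$ is by definition the quotient by this action, so $\varphi_{\lambda\ast_\w f}=\varphi_f$.

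For surjectivity, given $\varphi:\P(\w')\to\P(\w)$, I would pull back along the universal $\G_m$-torsor $\pi:\A^{n+1}\setminus\{0\}\to\P(\w')$ to obtain $\widetilde\varphi:\A^{n+1}\setminus\{0\}\to\P(\w)$. By the universal property of the target quotient stack this corresponds to a $\G_m$-torsor $\mathcal{T}$ on $\A^{n+1}\setminus\{0\}$ together with a $\G_m$-equivariant morphism $\mathcal{T}\to\A^{n+1}\setminus\{0\}$. Since $\textnormal{Pic}(\A^{n+1}\setminus\{0\})=0$ for $n\geq 1$, the torsor $\mathcal{T}$ is trivial; a trivialization yields an honest morphism $\A^{n+1}\setminus\{0\}\to\A^{n+1}\setminus\{0\}$, which by Hartogs extends to a polynomial morphism $x\mapsto(f_0(x),\ldots,f_n(x))$ on $\A^{n+1}$. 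The fact that $\widetilde\varphi$ was pulled back along $\pi$ forces, for each $\lambda\in\G_m$, the existence of some $\mu(\lambda)\in\G_m$ with
\[
f_i(\lambda\ast_{\w'} x)=\mu(\lambda)^{w_i}f_i(x)
\]
for all $i$ and all $x$. Since both sides are polynomials in $\lambda$, the assignment $\mu$ must be a character $\G_m\to\G_m$, hence $\mu(\lambda)=\lambda^{e(f)}$ for a unique integer $e(f)$. This shows each $f_i$ is weighted homogeneous of degree $e(f)w_i$ with respect to $\w'$, and the fact that $\widetilde\varphi$ lands in $\A^{n+1}\setminus\{0\}$ translates exactly into the radical condition $\sqrt{(f_0,\ldots,f_n)}\supseteq(x_0,\ldots,x_n)$.

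For injectivity, if $\varphi_f=\varphi_g$, then the two polynomial lifts arise as two trivializations of the same trivial torsor $\mathcal{T}$; these differ by an element of $\G_m(K)=K^\times$ acting on the target, and tracing through the weighted $\G_m$-action shows $g=\lambda\ast_\w f$. The main technical subtleties I anticipate are: (i) identifying the integer $e(f)$ and verifying $e(f)\geq 0$ — this follows because a negative exponent in the identity $f_i(\lambda\ast_{\w'} x)=\lambda^{e(f)w_i}f_i(x)$ would force the $f_i$ to be Laurent rather than polynomial whenever some $f_i$ is non-constant; and (ii) handling the edge case $n=0$ where $\P(w'_0)=B\mu_{w'_0}$, which requires interpreting $\G_m$-torsors on $\G_m$ directly but admits a parallel argument.
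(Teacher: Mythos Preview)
Your proposal is correct and follows essentially the same approach as the paper: the paper's proof simply cites the explicit description of morphisms between quotient stacks from Bruin--Najman \cite[Lemma A.2]{BN22} and observes that their argument for weighted projective lines \cite[Lemma 4.1]{BN22} carries over, whereas you unpack that description directly via the universal property of $[(\A^{n+1}\setminus\{0\})/\G_m]$, triviality of $\G_m$-torsors on $\A^{n+1}\setminus\{0\}$, and Hartogs. Your argument is a self-contained spelling-out of the cited lemma rather than a genuinely different route.
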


\begin{proof}
This follows from an explicit description of morphisms between quotient stacks due to Bruin and Najman \cite[Lemma A.2]{BN22}. In particular, the proof in the case of weighted projective lines, \cite[Lemma 4.1]{BN22}, works for higher dimensional weighted projective stacks.  
\end{proof}

\begin{definition}[Reduced degree]\label{def:ReducedDegree}
Let $\varphi:\mcP(\w')\to\mcP(\w)$ be a morphism of weighted projective stacks over $K$. Let $f=(f_i)\in\Phi_{\w',\w}(K)/\G_m(K)$ correspond to $\varphi$. Then the \textbf{reduced degree}, denoted $e(\varphi)$ or $e(f)$, is the constant $e(f)$ defined above.
\end{definition}

\begin{remark}
Equivalently, $e(\varphi)$ can be defined as the integer for which the pullback of the hyperplane bundle $\varphi^{\ast} \O_{\P(\w)}(1)$ equals $\O_{\P(\w')}(e(\varphi))$.
\end{remark}

For morphisms of $1$-dimensional weighted projective stacks, the degree and reduced degree are related as follows:
\begin{equation}
	\deg(\varphi) = e(\varphi)\frac{\lcm(w_0,w_1)}{\lcm(w'_0,w'_1)},
	\end{equation}
where $\lcm(w_0,w_1)=w_0\cdot w_1/\gcd(\w)$ accounts for the generic $\mu_{\gcd(\w)}$-gerbe of $\mcP(\w)$, and $\lcm(w'_0,w'_1)$  accounts for the $\mu_{\gcd(\w')}$-gerbe of $\mcP(\w')$.






\subsection{Heights on weighted projective stacks}\label{subsec:WPSheights}

For each place $v\in \Val_0(K)$ let $\p_v$ denote the corresponding prime ideal of the ring of integers $\O_K$ of $K$, and let $\pi_v$ be a uniformizer for the completion $K_v$ of $K$ at $v$. Define $\pi_{\p_v}\defeq \pi_v$ and $K_{\p_v}\defeq K_v$. For $x=(x_0,\dots,x_n)\in K^{n+1}-\{0\}$, define $|x_i|_{\w,v}\defeq |\pi_v|_v^{\lfloor v(x_i)/w_i\rfloor}$. For $v\in \Val_\infty(K)$, let $|\cdot|_v$ denote the usual absolute value. Define
\[
 |x|_{{\w},v}\defeq \begin{cases}
 \max_i\{|x_i|_{\w,v}\} & \text{ if } v\in \Val_0(K),\\
 \max_i\{|x_i|_v^{1/w_i}\} & \text{ if } K_v\cong \R,\\
 \max_i\{|x_i|_v^{2/w_i}\} & \text{ if } K_v\cong \C,
 \end{cases}
 \]
and 
\[
|x|_{\w,\infty}\defeq \prod_{v\in \Val_\infty(K)}|x|_{\w,v}.
\]
 
 \begin{definition}[Height]\label{def:height}
  The \textbf{(exponential) height} of a point $x=[x_0:\cdots:x_n]\in \mcP(\w)(K)$ is defined as
\[
\Ht_\w(x)\defeq \prod_{v\in \Val(K)} |(x_0,\dots,x_n)|_{\w,v}.
\]
\end{definition}
It is straightforward to check that this height function does not depend on the choice of representative of $x$.

\begin{definition}[Scaling ideal]
Define the \textbf{scaling ideal} $\mfI_{\w}(x)$ of $x=(x_0,\dots,x_n)\in K^{n+1}-\{0\}$ to be the fractional ideal
\[
\mfI_{\w}(x)\defeq \prod_{v\in \Val_0(K)} \p_v^{\min_i\{\lfloor v(x_i)/w_i\rfloor\}}.
\]
\end{definition}

Note that different representatives for the same point of $\mcP(\w)(K)$ can have different scaling ideals. 

The scaling ideal $\mfI_{\w}(x)$ can be characterized as the intersection of all fractional ideals $\a$ of $\O_K$ such that $x\in \a^{w_0}\times \cdots\times\a^{w_n}$. It has the property that
\[
\mfI_\w(x)^{-1}=\{a\in K : a^{w_i}x_i\in \O_K \text{ for all } i\}.
\]
Let $N_{K/\Q}$ denote the norm function defined on fractional ideals of $K$.
It is straightforward to check that the height from Definition \ref{def:height} can be written in terms of the scaling ideal as follows:
\[
\Ht_\w([x_0:\dots:x_n])=\frac{|x|_{\w,\infty}}{N_{K/\Q}(\mfI_\w(x))}.
\]

\begin{remark}
The height $\Ht_{\w}$ is known to have the Northcott property, i.e., for all $B\in R_{>0}$ the set
\begin{equation*}
	\{ x\in \mcP(\w)(K) : \Ht_{\w}(x)\leq B\}
\end{equation*}
is finite. In fact Deng \cite{Den98}, who first defined this height, gave an asymptotic for the number of points of bounded height. In the case of projective spaces $\P^n=\mcP(1,\dots,1)$, this height corresponds to the usual Weil height. In more geometric terms, this height on weighted projective stacks can be viewed as the `stacky height' associated to the line bundle $\O_{\mcP(\w)}(1)$ of $\mcP(\w)$ (see \cite[\S 3.3]{ESZB23} for this and much more about heights on stacks). 
\end{remark}

Recall that the (compactified) moduli stack of elliptic curves, $\mcX_{\GL_2(\Z)}$, is isomorphic (over $K$) to the weighted projective stack $\mcP(4,6)$. This isomorphism can be given explicitly as
\begin{align*}
\mcX_{\GL_2(\Z)} &\xrightarrow{\sim} \mcP(4,6)\\
y^2=x^3+Ax+B & \mapsto [A:B].
\end{align*}
Under this isomorphism, the hyperplane bundle on $\mcP(4,6)$ corresponds to the Hodge bundle on $\mcX_{\GL_2(\Z)}$. If an elliptic curve $E$ over $K$ admits a reduced, integral, short Weierstrass model $E:y^2=x^3+Ax+B$, then the \textbf{naive height} of $E$ is defined as
\[
\Ht(E)\defeq \prod_{v\in \Val_\infty(K)} \max\{|A|_v^3,|B|_v^2\},
\]
which is the same as the height with respect to the twelfth power of the Hodge bundle. For \textit{any} short Weierstrass equation $y^2=x^3+ax+b$ of $E$, which need not be integral or reduced, the naive height is given by
\[
\Ht(E)\defeq \Ht_{(4,6)}([a:b])^{12}.
\] 
This is well defined on isomorphism classes of elliptic curves since the moduli stack of elliptic curves, $\mcX_{\GL_2(\Z)}$, is isomorphic to the weighted projective stack $\mcP(4,6)$.
The advantage of defining the naive height in this way, is that it does not require one to choose a reduced integral short Weierstrass model. 



\subsection{Defects}\label{subsec:defects}

\begin{definition}[Defect]\label{def:Defect}
Let $\varphi_f:\mcP(\w')\to\mcP(\w)$ be a morphism of weighted projective stacks with $f=(f_i)\in \Phi_{\w',\w}(K)$ where each $f_i$ has coefficients in $\O_K$, and let $[a]\in \mcP(\w')(K)$. We define the \textbf{defect} of $[a]$ with respect to $f$ to be the ideal
\[
\delta_f(a)\defeq  \frac{\mfI_\w(f(a))}{\mfI_{\w'}(a)^{e(f)}}\subseteq \O_K.
\]
\end{definition}

Defects are well defined, since for any constant $\lambda$
\[
\delta_f(\lambda\ast_{\w'} a)=\frac{\mfI_\w(f(\lambda\ast_{\w'} a))}{\mfI_{\w'}(\lambda\ast_{\w'} a)^{e(f)}}=\frac{\lambda^{e(f)}\mfI_\w(f( a))}{\lambda^{e(f)}\mfI_{\w'}( a)^{e(f)}}=\delta_f(a).
\]
From this we can show that the defect is truly an integral ideal. As each $f_i$ has integral coefficients, if we choose a representative $(a_0,\dots,a_n)$ of $[a]$ with $a_i\in \O_K$ for each $i$, then $\mfI_\w(f( a))$ will be an integral ideal. For any prime ideal $\p\subset \O_K$ we may choose our representative $(a_0,\dots,a_n)$ of $[a]$ so that it is minimal at $\p$ (i.e., such that $\min_i \lfloor v_\p(a_i)/w'_i\rfloor=0$), and hence $\p\nmid \mfI_{\w'}(a)^{e(f)}$. By the well-definedness of $\delta_f(a)$, it follows that $\p$ does not divide the denominator of the a priori fractional ideal $\delta_f(a)$. Doing this for each prime ideal of $\O_K$ shows that $\delta_f(a)$ is indeed an integral ideal.

\begin{remark}\label{rem:defects}
The defect depends on the choice of $f$. In particular, it is possible for $f,g$ to both give rise to the same morphism $\varphi:\mcP(\w')\to\mcP(\w)$, but for their sets of defects to be different. 
\end{remark}

Let $\mfD_f$ denote the \textbf{set of defects of $f$},
\[
\mfD_f\defeq \{\delta_f(a) : [a]\in \mcP(\w')(K)\}.
\]

\begin{definition}\label{def:finite-defect}
We will say that a map $f$ has \textbf{finite defect} if the set of defects $\mfD_f$ is finite. We will say that a morphism $\varphi:\mcP(\w')\to\mcP(\w)$ has \textbf{finite defect} if there exists an $f$ with finite defect such that $\varphi=\varphi_f$.
\end{definition}

Roughly speaking, set of defects helps keep track of ideals $\d$ of $\O_K$ for which there exists a $\d$-minimal representative $(a_0,\dots,a_n)$ of a point $[a]\in \mcP(\w')(K)$, with the property that the representative $f(a_0,\dots,a_n)$ of the point $[f(a)]\in \mcP(\w)(K)$ is not $\d$-minimal.

\begin{remark}
For some morphisms of weighted projective lines the set of defects was implicitly studied in the work of Cullinan, Kenney, and Voight \cite{CKV22}, and later by Cho and Jeong \cite{CJ23b}. In these cases the defects can be determined using resultants. The study of defects for more general weighted projective stacks was introduced by Manterola Ayala \cite{MA21}. 
\end{remark}

 The following example shows that not all morphisms have finite defect.

\begin{example}[Morphisms with an infinite defect set.]
\sloppy Let $w>1$, let $\w'=\w=(w,\dots, w)\in \Z_{>1}^{n+1}$, and let $e\in \Z_{>1}$. Consider the morphism
\begin{align*}
\varphi_f:\mcP(\w')(K)\to \mcP(\w)(K),
\end{align*}
induced by the map
\begin{align*}
f:K[x_0,\dots,x_n] &\to K[y_0,\dots,y_n]\\
x_i &\mapsto {y}_i^e.
\end{align*}
Then, for any principal prime ideal $(p)\subseteq \O_K$, with $p\in \O_K$, we have that
\[
\delta_f(p^{w-1})=\frac{\mfI_\w\left(f\left(p^{w-1}\right)\right)}{\mfI_{\w'}(p^{w-1})^{e(f)}}= (p)^{\left\lfloor \frac{e(w-1)}{w}\right\rfloor}/(1)=(p)^{\left\lfloor \frac{e(w-1)}{w}\right\rfloor}
\]
is a non-trivial ideal, since $e(w-1)\geq w$ as $e\geq 2$ and $w>1$. As there are infinitely many principal prime ideals in $\O_K$ (by Dirichlet's theorem for prime ideals in number fields) this shows that the set of defects $\mfD_f$ is infinite in these cases.
\end{example}

The next result gives a large class of morphisms whose set of defects is finite.

\begin{proposition}\label{prop:finite-defect}
Let $\varphi_f:\mcP(\w')\to \mcP(\w)$ be a morphism of weighted projective stacks with $e(f)=1$ or $\w'=(1,\dots,1)$. Then $f$ has finite defect.
\end{proposition}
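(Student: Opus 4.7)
The plan is to exhibit a nonzero integral ideal $\mathfrak{C}_f \subseteq \O_K$, depending only on $f$, such that $\delta_f(a) \mid \mathfrak{C}_f$ for every $[a] \in \P(\w')(K)$. Since a nonzero integral ideal in $\O_K$ has only finitely many integral divisors, this will yield $|\mfD_f| < \infty$.

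In the case $e(f) = 1$, the cleanest route is to compare the two heights $\Ht_\w \circ \varphi_f$ and $\Ht_{\w'}$ on $\P(\w')(K)$. Because $\varphi_f^{*}\O_{\P(\w)}(1) \cong \O_{\P(\w')}(e(f)) = \O_{\P(\w')}(1)$, both are heights attached to the same line bundle on $\P(\w')$; the stacky Weil height machine then supplies positive constants $c_1, c_2$ with $c_1 \Ht_{\w'}(x') \leq \Ht_\w(\varphi_f(x')) \leq c_2 \Ht_{\w'}(x')$ for all $x'$. On the other hand, unpacking the definitions of both heights in terms of scaling ideals and archimedean factors yields
\[
\frac{\Ht_\w(\varphi_f(x'))}{\Ht_{\w'}(x')} \;=\; \frac{C_\infty(x')}{N(\delta_f(x'))},
\]
where
\[
C_\infty(x') \;=\; \prod_{v \in \Val_\infty(K)} \frac{\max_i |f_i(a)|_v^{1/w_i}}{\max_i |a_i|_v^{1/w'_i}}
\]
is bounded above and below by positive constants, using continuity of $\varphi_f$ together with the non-vanishing $\max_i |f_i(a)|_v \neq 0$ (which holds since $\varphi_f$ is a morphism), on the compact spaces $\P(\w')(K_v)$. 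Combining the two displays, $N(\delta_f(x'))$ is uniformly bounded; since there are only finitely many integral ideals of $\O_K$ of bounded norm, $\mfD_f$ is finite.

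In the case $\w' = (1, \dots, 1)$, we argue by reduction modulo $\p$. Since $\varphi_f$ is a morphism, $\sqrt{(f_0, \dots, f_n)} \supseteq (x'_0, \dots, x'_n)$, so there are $N_j \in \Z_{\geq 1}$ and $g_{j, i} \in K[x']$ with $(x'_j)^{N_j} = \sum_i g_{j, i}(x') f_i(x')$. After inverting a fixed finite set $S_0$ of primes, we may take $g_{j, i} \in \O_K[x']$. For $\p \notin S_0$ and a $\p$-minimal representative $(a_0, \dots, a_n)$ of $[a]$, the condition $\min_i v_\p(a_i) = 0$ (forced by $w'_i = 1$) gives $v_\p(a_{i_0}) = 0$ for some $i_0$, so $\overline{a} \in \P^n(\kappa(\p))$ is a genuine nonzero point. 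Reducing $\varphi_f$ to a morphism over $\kappa(\p)$, the image $\overline{\varphi_f}(\overline{a}) \in \P(\w)(\kappa(\p))$ is well-defined, so not all $\overline{f_i(a)}$ vanish; hence $v_\p(f_{i_0}(a)) = 0$ for some $i_0$ and $v_\p(\delta_f(a)) = 0$. Thus $\delta_f(a)$ is supported on the finite set $S_0$, and a local Nullstellensatz estimate accounting for the denominators at each $\p \in S_0$ bounds the local exponents uniformly in $[a]$, yielding the required $\mathfrak{C}_f$.

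The principal technical point is the verification of the archimedean boundedness of $C_\infty(x')$ together with the line-bundle height comparison in the $e(f) = 1$ case; both rely on the stacky height formalism (as in Ellenberg--Satriano--Zureick-Brown) and the morphism hypothesis (to ensure that $\max_i |f_i(a)|_v$ never vanishes). The remainder is a direct translation of this comparison into the claimed integral-ideal bound on $\delta_f$.
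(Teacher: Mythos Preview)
Your argument is correct in both cases, but the $e(f)=1$ route is genuinely different from the paper's. The paper treats both hypotheses uniformly through an explicit Nullstellensatz identity $\alpha_i x_i^{r_i}=\sum_j g_{i,j}f_j$: a careful iterative local argument first pins down the finite set of primes that can divide any defect, and then the same identity bounds the $\p$-adic exponent at each such prime. Your $\w'=(1,\dots,1)$ case is essentially the easy instance of this argument---the iteration collapses to a single step because every weight is $1$---though you leave the exponent bound at primes of $S_0$ as a sketch. Your $e(f)=1$ case, by contrast, replaces the local analysis entirely with a global height comparison: since $\varphi_f^*\O_{\P(\w)}(1)\cong\O_{\P(\w')}(1)$, the stacky height machine forces $\Ht_\w\circ\varphi_f$ and $\Ht_{\w'}$ to have bounded ratio, and combining this with the archimedean boundedness of $C_\infty$ (which is just compactness) bounds $N(\delta_f)$ outright. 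This is slick and conceptual, but it imports the Ellenberg--Satriano--Zureick-Brown formalism, and implicitly the spreading-out of $\varphi_f$ over $\O_K[1/N]$ needed to match the integral models of $\O(1)$ at almost all places---which is itself a disguised ``finitely many bad primes'' statement. The paper's route is more elementary and self-contained, and handles both hypotheses with a single argument; your approach bifurcates and must treat the two cases by entirely different methods.
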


To prove this proposition we will first prove a lemma. Define the set of prime ideals 
\[
\widetilde{\mfD}_f\defeq \{\p\subset \O_K : \p|\d \text{ for some } \d\in \mfD_f\}.
\]

\begin{lemma}\label{lem:defect_primes}
If $\w'=(1,\dots,1)$ or $e(f)=1$, then the set $\widetilde{\mfD}_f$ is finite.
\end{lemma}

\begin{proof}
Without loss of generality, we may order the $(n+1)$-tuples $\w'=(w'_0,\dots,w'_n)$ and $\w=(w_0,\dots,w_n)$ so that $w'_i\leq w'_j$ and $w_i\leq w_j$ for all $i\leq j$. Let $i_0=0, i_1,\dots, i_m$ be the set of indices for which
\[
w'_0=\cdots=w'_{i_1-1}\ <\ w'_{i_1}=\cdots=w'_{i_2-1}\ <\ \cdots\ <\ w'_{i_m}=\cdots=w'_n.
\] 
Let $\varphi_f:\mcP(\w')\to\mcP(\w)$ be the morphism of weighted projective stacks corresponding to the $(n+1)$-tuple of weighted homogeneous polynomials $f=(f_0,\dots,f_n)$. By possibly rescaling, we may assume that the $f_i$ have integral coefficients. For any $(n+1)$-tuple of non-negative integers $z=(z_0,\dots,z_n)$, let $x^z\defeq x_0^{z_0}x_1^{z_1}\cdots x_n^{z_n}$. Then there exist algebraic integers $\alpha_{i,z}\in \O_K$ for which
\[
f_i(x)=\sum_{\substack{ z=(z_0,\dots,z_n)\in \Z^{n+1}_{\geq 0}\\ z_0w'_0+\cdots+z_n w'_n=e(f) w_i}} \alpha_{i,z} x^z.
\] 

As $\sqrt{(f_0,\dots,f_n)}=(x_0,\dots,x_n)$, there exist positive integers $r_i\in \Z_{>0}$, non-zero algebraic integers $\alpha_i\in \O_K-\{0\}$, and polynomials $g_{i,j}\in \O_K[x_1,\dots,x_n]$, such that, for each $i\in \{0,\dots, n\}$,
\begin{align}\label{eq:radical}
\alpha_{i} x_i^{r_i}=\sum_{j=0}^n g_{i,j}(x) f_j(x). 
\end{align}

Let $\mfA\subseteq \O_K$ be the integral ideal  obtained by taking the intersection of the principal ideals $(\alpha_{i})$ and $(\alpha_{j,z})$. To prove the lemma it suffices to prove that each prime ideal in the set $\widetilde{\mfD}_f$ divides $\mfA$, and to do this it suffices to prove that any prime ideal which does not divide $\mfA$ is not in $\widetilde{\mfD}_f$. So let $\p$ be a prime ideal of $\O_K$ that does not divide the ideal $\mfA$. Let $a\in \mcP(\w')(K)$ and let $(a_0,\dots,a_n)$ be a representative of $a$ which is minimal at $\p$ (i.e., such that $\min_i \lfloor v_\p(a_i)/w'_i\rfloor=0$ for all $i$). 
 For the remainder of the proof we will work in the completion $K_\p$.

To prove the lemma it will suffice to show that, if $v_\p(f_i(a))\geq w_i$ for all $i$, then $v_\p(a_i)\geq {w'_i}$ for all $i$. Assuming that $v_\p(f_i(a))\geq w_i$ for all $i$, equation (\ref{eq:radical}) implies that $v_\p(a_i)\geq 1$ for all $i$. Thus, in the case that $\w'=(1,1\dots,1)$ we are done. So suppose that not all weights $w'_i$ are equal to one, and assume that $e(f)=1$. Let $a_{i,1}$ be such that $a_i=\pi_\p a_{i,1}$.

First we consider the $n-i_m$ weights $w'_{i_m},w'_{i_m+1},\dots, w'_n$, which are all equal to $w'_n$. Define
\[
f_j^{(m)}(x_{i_m}, x_{i_m+1},\dots,x_n)\defeq f_j(0,\dots,0,x_{i_m},x_{i_m+1},\dots,x_n)
\]
and
\[
g_{i,j}^{(m)}(x_{i_m}, x_{i_m+1},\dots,x_n)\defeq g_{i,j}(0,\dots,0,x_{i_m},x_{i_m+1},\dots,x_n).
\]
By equation (\ref{eq:radical}), for $i\geq i_m$, we have 
\begin{align}\label{eq:radical(m)}
\alpha_i x_i^{r_i} = \sum_{j=0}^n g_{i,j}^{(m)}(x) f_j^{(m)}(x).
\end{align}
As $w'_{i_m},\dots, w'_n$ are the largest weights, the terms of the form $\alpha_{j,z} x_{i_m}^{z_{i_m}}\cdots x_{n}^{z_{n}}$ are those of lowest unweighted degree in the weighted homogeneous polynomials $f_j(x)$. In particular, these terms have unweighted degree $w_j/w'_n$. From this, since $v_\p(a_i)\geq 1$ for all $i$ and $v_\p(f_j(a))\geq w_j$ for all $j$, it follows that
\begin{align}\label{eq:divisibility_m}
v_\p(f_j^{(m)}(a_{i_m,1}\dots,a_{n,1}))\geq w_j-w_j/w'_n
\end{align}
for all $j$. By equation (\ref{eq:radical(m)}), this implies $v_\p(a_{i,1})\geq 1$ for all $i\geq i_m$, and thus $v_\p(a_i)\geq 2$. If $w'_n=2$ we are done. So suppose $w'_n>2$. For $i\geq i_m$ let $a_{i,2}$ be such that $a_i=\pi_\p^2 a_{i,2}$. Note that the non-zero $f_j^{(m)}$ are homogeneous of (non-weighted) degree $w_j/w_n'$. Therefore, by equation (\ref{eq:divisibility_m}) and the assumption $w'_n>2$, we have
\[
v_\p(f_j^{(m)}(a_{i_m,2},\dots,a_{n,2}))\geq w_j-2w_j/w'_n >0.
\]
By equation (\ref{eq:radical(m)}), this implies  $v_\p(a_{i,2})\geq 2$ for all $i\geq i_m$, and thus $v_\p(a_i)\geq 3$. If $w'_n=3$ we are done. Continuing in this way, we will eventually have that $v_\p(a_i)\geq w'_i$ for all $i\geq i_m$.

We now consider the weights $w'_{i_{m-1}},\dots, w'_{i_{m}-1}$, which are all equal to $w'_{i_{m-1}}$. Let 
\[
f_j^{(m-1)}(x_{i_{m-1}}, x_{i_{m-1}+1},\dots,x_{i_{m}-1})\defeq f_j(0,\dots,0,x_{i_{m-1}}, x_{i_{m-1}+1},\dots,x_{i_{m}-1},0,\dots,0)
\]
and
\[
g_{i,j}^{(m-1)}(x_{i_{m-1}}, x_{i_{m-1}+1},\dots,x_{i_{m}-1})\defeq g_{i,j}(0,\dots,0,x_{i_{m-1}}, x_{i_{m-1}+1},\dots,x_{i_{m}-1},0,\dots,0).
\]
Then, by equation (\ref{eq:radical}), for $i\geq i_m$ we have 
\begin{align}\label{eq:radical(m-1)}
\alpha_i x_i^{r_i} = \sum_{j=0}^n g_{i,j}^{(m-1)}(x) f_j^{(m-1)}(x).
\end{align}
Observe that all coefficients of the polynomials
\[
f_j(\pi_\p x_{0},\dots, \pi_\p x_{i_{m-1}}, \pi_\p^{w'_n} x_{i_m},\dots,\pi_\p^{w'_n} x_{n})\in K[x_{0}, x_{1},\dots, x_{n}]
\]
are divisible by $\pi_\p^{w_j/w'_{i_m}}$, and the only coefficients divisible by $\pi_\p^{w_j/w'_{i_m}}$ exactly are those of the terms $x_{i_{m-1}}^{z_{i_{m-1}}}\cdots x_{i_{m-1}-1}^{z_{i_m-1}}$. From this, together with the fact that $v_\p(f_j(a))\geq w_j$ for all $j$, it follows that
\begin{align}\label{eq:divisibility_m-1}
v_\p( f_{j}^{(m-1)}(a_{i_{m-1},1},\dots, a_{i_{m}-1}))\geq w_j-w_j/w'_{i_{m-1}}
\end{align}
for all $j$. By equation (\ref{eq:radical(m-1)}), this implies that $v_\p(a_{i,1})\geq 1$ for all $i_{m-1}\leq i < i_m$, and thus $v_\p(a_{i,1})\geq  2$. So if $w'_{i_{m-1}}=2$, then we are done. Therefore, suppose $w'_{i_{m-1}}>2$.  For $i_{m-1}\leq i < i_m$ let $a_{i,2}$ be such that $a_i=\pi_\p^2 a_{i,2}$. As the $f_j^{(m-1)}$ are homogeneous of (non-weighted) degree $w_j/w'_{i_{m-1}}$, it follows from equation (\ref{eq:divisibility_m-1}) and the assumption $w'_{i_{m-1}}>2$, that
\[
v_\p(f_j^{(m-1)}(a_{i_{m-1},2},\dots, a_{i_m-1,2}))\geq w_j-2 w_j/w'_{i_{m-1}}>0.
\]
By equation (\ref{eq:radical(m-1)}), this implies that $v_\p(a_{i,2})\geq 1$ for all $i_{m-1}\leq i < i_m$, and therefore $v_\p(a_i)\geq 3$. Therefore we are done if $w'_{i_{m-1}}=3$. Continuing in this way, we eventually find that $v_\p(a_i)\geq w'_i$ for all $i_{m-1}\leq i < i_m$. 

Repeating the above argument with $i_{m-1}$ replaced by each of $i_{m-2},\dots, i_0$ shows that $v_\p(a_i)\geq w'_i$ for each $i$, which is what we wanted to show.
\end{proof}

\begin{proof}[Proof of Proposition \ref{prop:finite-defect}]

By the proof of Lemma \ref{lem:defect_primes} it follows that each $\d\in \mfD_f$ is divisible only by the finitely many primes which divide the integral ideal $\mfA\subseteq \O_K$, generated by all the $\alpha_i$ and $\alpha_{j,z}$. 

Let $\p$ be a prime ideal dividing $\mfA$. To prove the proposition it remains to show that the set of positive integers
\begin{align}\label{eq: s-finite}
\{s: \p^s|\d \text{ for some } \d\in \mfD_f\}
\end{align}
is bounded. By equation (\ref{eq:radical}), for any $s\geq \max_i \{r_i w'_i+v_\p(\alpha_i)\}$ we will have that, if $v_\p(f_j(a))\geq s w_j$ for all $j$, then $v_\p(a_i)\geq w'_i$ for all $i$. It follows that the set (\ref{eq: s-finite}) is finite, and $\mfD_f$ is contained within the finite set of ideals 
\[
\left\{\prod_{\p|\mfA} \p^{s_\p} : 0\leq s_\p<\max_i \{r_i w'_i+v_\p(\alpha_i)\}\right\}.
\]
\end{proof}

We complete this section with a proposition comparing the weights $\w$ and $\w'$.

\begin{proposition}\label{prop:w'<e(f)w}
Let $\varphi_f : \mcP(\w')\to\mcP(\w)$ be a morphism of weighted projective stacks, with weighted degree $e(f)$. Suppose that the weights $\w$ and $\w'$ are  ordered so that $w_i\leq w_j$ and $w'_i\leq w'_j$ for all $i\leq j$. Then $w_i'\leq e(f) w_i$ for all $i$.
\end{proposition}

\begin{proof}
By way of contradiction, suppose that $w'_s> e(f)w_s$ for some $s$, and assume that $s$ is minimal amongst the indices with this property. Let
\[
f_j^{(s)}(x_s, x_{s+1},\dots, x_{n})\defeq f_j(0,\dots, 0, x_{s},x_{s+1},\dots, x_n) 
\]
and
\[
g_{i,j}^{(s)}(x_{s}, x_{s+1},\dots, x_n)\defeq g_{i,j}(0,\dots,0,x_{s}, x_{s+1},\dots, x_n).
\] 
Note that $w'_i>e(f)w_s$ for all $i\geq s$, so that $f_j^{(s)}=0$ for all $j\leq s$.
Therefore, by equation (\ref{eq:radical}), we have that
\begin{align}\label{eq:radical(s)}
\alpha_i x_i^{r_i} = \sum_{j=s+1}^n g_{i,j}^{(s)}(x) f_j^{(s)}(x).
\end{align}
 But this is impossible since there are at most $n-s$ non-zero $f_j^{(s)}$, but there are $n-s+1$ non-zero $x_i$.
\end{proof}

\section{Weighted geometric sieve over number fields}\label{sec:Geometric-Sieve}

In this section we prove a weighted version of some results of Bright, Browning, and Loughran \cite[\S 3]{BBL16}, which are proven along the same lines as a lemma of Poonen and Stoll \cite[Lemma 20]{PS99}. 

Throughout this section $\w= (w_1,w_2,\dots, w_n)$ will be an $n$-tuple of positive integers, $|\w|\defeq w_1+w_2+\cdots +w_n$, and $w_{\min}=\min_i\{w_i\}$. Similarly, $\w'$ will be an $n$-tuple of positive integers with $|\w'|$ and $w'_{\min}$ defined analogously. 

\subsection{O-minimal geometry}\label{subsec:GSo-minimal}

We briefly cover the basics of o-minimal geometry which will be needed in this article. A general reference for this subsection is \cite[\S 1]{vdD98}.
Let $m_L$ denote Lebesgue measure on $\R^n$.

\begin{definition}[Semi-algebraic set]
A (real) \textbf{semi-algebraic subset} of $\R^n$ is a finite union of sets of the form
\[
\{x\in \R^n: f_1(x)=\cdots=f_k(x)=0 \text{ and } g_1(x)>0,\dots,g_l(x)>0\},
\]
where $f_1,\dots,f_k,g_1,\dots,g_l\in \R[X_1,\dots,X_n]$.
\end{definition}

\begin{definition}[Structure]
A \textbf{structure} is a sequence, $\mcS=(\mcS_n)_{n\in \Z_{>0}}$, where each $\mcS_n$ is a set of subsets of $\R^n$ with the following properties:
\begin{enumerate}[label=(\roman*)]
\item If $A,B\in \mcS_n$, then $A\cup B\in \mcS_n$ and $\R^n - A\in \mcS_n$ (i.e., $\mcS_n$ is a Boolean algebra).
\item If $A\in \mcS_m$ and $B\in \mcS_n$, then $A\times B\in \mcS_{n+m}$.
\item If $\pi:\R^n\to\R^m$ is the projection to $m$ distinct coordinates and $A\in \mcS_n$, then $\pi(A)\in \mcS_m$.
\item All real semi-algebraic subsets of $\R^n$ are in $\mcS_n$.
\end{enumerate}
A subset is \textbf{definable} in $\mcS$ if it is contained in some $\mcS_n$. Let $D\subseteq S_n$. A function $f\colon D\to \R^m$ is said to be \textbf{definable} in $\mcS$ if its graph, $\Gamma(f)\defeq\{(x,f(x)):x\in D\}\subseteq \R^{m+n}$, is definable in $\mcS$.
\end{definition}

Note that the intersection of definable sets is definable by property (i).

\begin{definition}[O-minimal structure]
An \textbf{o-minimal structure} is a structure in which the following additional property holds:
\begin{enumerate}[label=(v)]
\item The boundary of each set in $\mcS_1$ is a finite set of points.
\end{enumerate}
\end{definition}

The class of semialgebraic sets is an example of an o-minimal structure (see, e.g., \cite[\S 2]{vdD98}).
The main structure we will use is $\R_{\exp}$, which is defined to be the smallest structure in which the real exponential function, $\exp\colon\R\to\R$, is definable. Observe that the function $\log(x)$ is definable in $\R_{\exp}$ since its graph,
\[
\Gamma(\log(x))=\{(x,\log(x)) : x\in \R_{>0}\}=\{(\exp(y), y) : y\in \R\}\subset \R^2,
\]
is clearly definable in $\R_{\exp}$.

\begin{theorem}[\cite{Wil96}]\label{thm:RexpO-minimal}
The structure $\R_{\exp}$ is o-minimal.
\end{theorem}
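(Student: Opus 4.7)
The plan is to follow Wilkie's original strategy, which splits o-minimality of $\R_{\exp}$ into two deep ingredients: a model-completeness theorem for the theory of $\R_{\exp}$, and Khovanskii's finiteness theorem for Pfaffian functions. Once both are in hand the combination yields axiom (v) with very little extra work; the other structure axioms hold by construction of $\R_{\exp}$ as the smallest structure containing $\exp$.

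The first and hardest step is to prove model completeness: every formula in the language $\{+,\cdot,<,0,1,\exp\}$ is equivalent to an existential one. Concretely this means that every set $S\subseteq\R^n$ definable in $\R_{\exp}$ has the form $\pi(V)$, where $\pi:\R^{n+k}\to\R^n$ is a coordinate projection and $V\subseteq\R^{n+k}$ is the real zero set of a finite system of exponential polynomials, i.e.\ polynomials in the variables $x_1,\ldots,x_{n+k}$ together with $\exp(x_1),\ldots,\exp(x_{n+k})$. Wilkie establishes this by a delicate induction on formula complexity, powered by Weierstrass-preparation--type analytic results for exponential polynomials together with a careful treatment of smooth charts; this is where the real technical difficulty lies, and I expect it to be the main obstacle. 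The second ingredient, Khovanskii's theorem on fewnomials, then provides an explicit finite bound on the number of connected components of the zero set of any system of exponential polynomials in $\R^n$, the bound depending only on combinatorial data (number of exponentials, polynomial degrees) of the defining system. The exponential function satisfies the ODE $y'=y$ and is therefore Pfaffian in Khovanskii's sense, so his theorem applies directly.

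To conclude, let $S\subseteq\R$ be any set definable in $\R_{\exp}$. Model completeness exhibits $S=\pi(V)$ for some semi-exponential-polynomial $V\subseteq\R^{1+k}$, where $\pi:\R^{1+k}\to\R$ is projection to the first coordinate. Khovanskii's theorem bounds the number of connected components of $V$, and the continuous image of a set with finitely many connected components has finitely many connected components. Hence $S$ is a finite union of points and open intervals, so its boundary in $\R$ is a finite set of points, which is exactly the o-minimality condition on $\mcS_1$.
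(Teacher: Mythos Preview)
Your outline is a correct high-level sketch of Wilkie's original argument: model completeness of the theory of the real exponential field reduces every definable set to a projection of the zero set of a system of exponential polynomials, and Khovanskii's theorem then bounds the number of connected components, yielding axiom (v). The paper, however, does not prove this theorem at all; it simply states it and cites \cite{Wil96}. So there is nothing to compare against beyond noting that your sketch faithfully summarizes the cited source.
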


From now on we will call a subset of $\R^n$ \textbf{definable} if it is definable in some o-minimal structure.

 The following proposition gives the key measure theoretic properties of bounded definable subsets in $\R^n$.

\begin{proposition}{\cite[Lemma 5.3]{BW14}}\label{prop:DefinableMeasurable}
If $R\subset \R^n$ is a bounded definable subset, then $R$ is measurable and the measure of its boundary $\partial R$ is $m_L(\partial R)=0$.
\end{proposition}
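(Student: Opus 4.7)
The plan is to invoke the cell decomposition theorem for o-minimal structures, which states that any bounded definable subset of $\R^n$ admits a finite partition into definable cells. A cell is, after a permutation of coordinates, either the graph of a definable continuous function over a lower-dimensional cell, or an open band between two such graphs. Applied to $R$, this produces a finite partition of a box containing $R$ into cells, each of which is either entirely contained in $R$ or disjoint from $R$.

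The measurability of $R$ will follow by analyzing the cells of this partition. Top-dimensional cells (those of dimension $n$) are open in $\R^n$ and hence measurable. Cells of dimension strictly less than $n$ are graphs of continuous functions over lower-dimensional sets (iteratively, by induction on $n$); a standard Fubini argument shows that such a graph has $n$-dimensional Lebesgue measure zero. Since $R$ is a finite disjoint union of such cells, it is measurable.

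For the boundary statement, I first note that definability is preserved under topological closure and interior, so $\partial R = \overline{R}\setminus R^{\circ}$ is itself a bounded definable set. The crucial input from o-minimality is the frontier dimension inequality: the frontier $\overline{C}\setminus C$ of any definable cell $C$ has dimension strictly less than $\dim C$. Applied to the cells of the decomposition of $R$, this forces $\partial R$ to have empty interior in $\R^n$. Consequently, when I apply cell decomposition to $\partial R$, every cell appearing must be of dimension strictly less than $n$, since any top-dimensional cell would be a nonempty open subset of $\R^n$ contained in $\partial R$, contradicting the empty interior. Invoking the Fubini-type measure-zero argument from the previous paragraph then yields $m_\infty(\partial R) = 0$.

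The main obstacle is that both of the key ingredients, namely the cell decomposition theorem and the frontier dimension inequality, are nontrivial structural results in o-minimal geometry which would require substantial work to verify from scratch. In a short proof they are simply cited from a standard reference such as van den Dries's book on tame topology; once they are granted, the remainder of the argument reduces to elementary Fubini-type estimates. Thus the structural input from o-minimality carries essentially all the weight of the proof.
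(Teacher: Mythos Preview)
The paper does not give its own proof of this proposition; it is simply quoted as \cite[Lemma 5.3]{BW14} with no argument. Your sketch via the cell decomposition theorem is correct and is the standard route (essentially what one finds in van den Dries or in the cited Barroero--Widmer paper), so there is nothing to compare.

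One minor comment on efficiency: for the boundary claim you detour through the frontier dimension inequality to conclude that $\partial R$ has empty interior, and then apply a second cell decomposition to $\partial R$. It is slightly cleaner to take a single cell decomposition of $\R^n$ compatible with $\{R, R^c\}$: each $n$-dimensional cell is open and lies entirely in $R$ or entirely in $R^c$, hence in $R^\circ$ or in $(R^c)^\circ$; therefore $\partial R$ is already contained in the union of the lower-dimensional cells of that one decomposition, which has measure zero by your Fubini argument. This avoids invoking the frontier inequality and a second decomposition, though of course both arguments are valid.
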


\subsection{Weighted geometry-of-numbers}\label{subsec:GSgeometry-of-numbers}

The following is a version of the Principle of Lipschitz, which gives estimates for the number of lattice points in a weighted homogeneous space:

\begin{proposition}\label{prop:o-Lip}
Let $R\subset \R^{n}$ be a bounded definable set. Let  $\Lambda\subset \R^n$ be a rank $n$ lattice. For any positive real number $B$, define the set 
\[
R(B)\defeq B\ast_{\w'}R=\{B\ast_{\w'}x : x\in R\}.
\] 
Then
\[
\#(\Lambda\cap R(B))=\frac{m_L(R)}{\det \Lambda} B^{|\w'|} + O\left(B^{|\w'|-w'_{\min}}\right),
\]
as a function of $B$, where the implied constant depends only on $R$, $\Lambda$, and $\w'$. 
\end{proposition}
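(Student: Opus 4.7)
The plan is to apply the lattice-counting theorem of Barroero and Widmer \cite[Theorem 1.3]{BW14} to the one-parameter family $\{R(B)\}_{B\geq 1}$ parametrized by $B$. The first step is to check that this is a definable family in whatever o-minimal structure contains $R$: since each weight $w'_i$ is a positive integer, the scaling $(B,x)\mapsto B\ast_{\w'}x$ is polynomial in $B$ and $x$, and so the total space $\{(B,y) : y\in R(B)\}$ is the image of $[1,\infty)\times R$ under this polynomial map and is definable in the same o-minimal structure as $R$.

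Applying the Barroero--Widmer theorem yields, for every $B\geq 1$,
\[
\left| \#(\Lambda\cap R(B)) - \frac{m_\infty(R(B))}{\det\Lambda} \right| \ll \sum_{j} V_j(R(B)),
\]
where $V_j(R(B))$ is the supremum of the $j$-dimensional volume of the orthogonal projection of $R(B)$ onto a $j$-dimensional coordinate subspace, with implied constant depending only on the family and on the successive minima of $\Lambda$. A linear change of variables gives $m_\infty(R(B))=B^{|\w'|}m_\infty(R)$, which produces the main term.

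For the error, if $L$ is a proper coordinate subspace spanned by the axes indexed by a subset $I$, then $\pi_L(R(B))$ is obtained from $\pi_L(R)$ by scaling the $i$-th coordinate by $B^{w'_i}$ for each $i\in I$, so
\[
m_L(\pi_L(R(B))) = B^{\sum_{i\in I}w'_i}\, m_L(\pi_L(R)) \ll B^{\sum_{i\in I}w'_i},
\]
since $R$ is bounded. The exponent $\sum_{i\in I}w'_i$ over proper subsets $I$ is maximized by omitting only an index achieving $w'_{\min}$, giving a bound $O(B^{|\w'|-w'_{\min}})$; all other contributions are strictly smaller and are absorbed. Combining the main term and error bound gives the claimed asymptotic, with implied constant depending only on $R$, $\Lambda$, and $\w'$. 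The only subtlety is the definability check for the family, but this is automatic from the fact that the weighted scaling is polynomial in $B$; the actual counting is entirely packaged in the Barroero--Widmer theorem.
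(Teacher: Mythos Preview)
Your proof is correct and follows essentially the same approach as the paper: both apply the Barroero--Widmer theorem to the family $\{R(B)\}$, extract the main term from $m_\infty(R(B))=B^{|\w'|}m_\infty(R)$, and bound the projection volumes $V_j(R(B))$ by the dominant contribution $B^{|\w'|-w'_{\min}}$. Your explicit verification that the family is definable (via the polynomial scaling map) is a useful detail that the paper leaves implicit.
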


\begin{proof}
This follows from a general version of the Principle of Lipschitz due to Barroero and Widmer \cite[Theorem 1.3]{BW14}. Since $R(B)$ is the image of $B$ under the linear transformation
\begin{equation*}
	M=\begin{pmatrix}
	 B^{w'_{1}} & & \\ & \ddots & \\ & & B^{w'_{n}}			   \end{pmatrix},
\end{equation*}
by \cite[Theorem 2.20 (e) and \S 2.23]{Rud87} we have
\begin{equation*}
 	m_L(R(B))=\det(M) m_L(R) = B^{|\w'|} m_L(R).
 \end{equation*}
Therefore, \cite[Theorem 1.3]{BW14} gives the desired leading term. Let $V_j(R(B))$ denote the sum of the $j$-dimensional volumes of the orthogonal projections of $R(B)$ onto each $j$-dimensional coordinate subspace of $\R^n$. The error term given in \cite[Theorem 1.3]{BW14} is
\begin{align}\label{eq:o-lip error}
O\left(1+\sum_{j=1}^{n-1} V_j(R(B))\right),
\end{align}
where the implied constant depends only on $R$ and $\Lambda$.
 In our case, one observes that $V_i(R(B))=O(V_{j}(R(B))$ for all $i\leq j$. Moreover, 
 \[
 V_{n-1}(R(B))=O(\sum_{i\leq n} B^{|\w'|-w_i'})=O(B^{|\w'|-w'_{\min}}),
 \]
  where the implied constant depends only on $R$ and ${\w'}$. By dimension considerations, we see that if $B^{|\w'|-w'_{\min}}=O(V_i(R(B)))$, then we must have $i\geq n-1$. Combining these observations with the error term (\ref{eq:o-lip error}) we obtain the desired error term.
\end{proof}

We define a \textbf{($\Z_p^n$)-box} $\mcB_p\subset \Z_p^n$ to be a Cartesian product $\prod_{j=1}^n \mcB_{p,j}$ of finitely many closed balls $\mcB_{p,j}\defeq \{x\in \Z_p : |x-a_{p,j}|_p\leq b_{p,j}\}\subset \Z_p$, where $a_{p,j}\in \Z_p$ and $b_{p,j}\in \{p^k: k\in \Z\}$. Let $S$ be a finite set of primes. We define a \textbf{box} $\mcB\subset \prod_{p\in S} \Z_p^n$ to be a Cartesian products of finitely many $\Z_p^n$-boxes,
\[
\mcB=\prod_{p\in S} \mcB_{p}=\prod_{p\in S}\prod_{j=1}^n \mcB_{p,j}.
\]


\begin{lemma}[Box Lemma]\label{lem:box}
Let $\Omega_\infty\subset \R^n$ be a bounded definable subset. Let $S$ be a finite set of primes and $\mcB=\prod_{p\in S} \mcB_p$ a box (as defined above). Then, for each positive real number $B\in \R_{>0}$, we have
\begin{align*}
\#\left\{x\in \Z^n \cap B\ast_\w \Omega_\infty: x\in \prod_{p\in S} \mathcal{B}_p \right\}
=\left(m_L(\Omega_\infty)\prod_{p\in S} m_p(\mathcal{B}_p)\right)B^{|\w|}
 + O\left(B^{|\w|-w_{\min}} \right),
\end{align*}
as a function of $B$, where the implied constant depends only on $\Omega_\infty$, $\mathcal{B}$, and $\w$.
\end{lemma}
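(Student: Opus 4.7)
The plan is to reduce the constraint $x \in \prod_{p\in S} \mcB_p$ to a system of congruences that cuts out a single coset of a full-rank sublattice $\Lambda \subseteq \Z^n$, and then apply the weighted Principle of Lipschitz (Proposition \ref{prop:o-Lip}) to $\Lambda$ and the region $B \ast_\w \Omega_\infty$. The key point is that the covolume of $\Lambda$ will turn out to be exactly $\prod_{p\in S} m_p(\mcB_p)^{-1}$, so the main term of the resulting estimate absorbs the local factors automatically.

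To carry this out, consider a single factor $\mcB_{p,j} = \{x \in \Z_p : |x - a_{p,j}|_p \leq b_{p,j}\}$ with $a_{p,j} \in \Z_p$ and $b_{p,j} = p^{k_{p,j}}$. If $k_{p,j} \geq 0$ then $\mcB_{p,j} = \Z_p$ and imposes no condition, while if $k_{p,j} = -m_{p,j} < 0$ then $\mcB_{p,j} = a_{p,j} + p^{m_{p,j}}\Z_p$, corresponding to the congruence $x_j \equiv a_{p,j} \pmod{p^{m_{p,j}}}$; in either case $m_p(\mcB_{p,j}) = p^{-\max(0,m_{p,j})}$. Setting $m_{p,j}=0$ in the trivial case and applying the Chinese remainder theorem across the finitely many $p \in S$, the condition $x \in \prod_{p\in S}\mcB_p$ on $x \in \Z^n$ becomes a congruence system $x_j \equiv v_j \pmod{N_j}$ where $N_j = \prod_{p\in S} p^{m_{p,j}}$. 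Hence the integer points satisfying the box condition form the coset $v + \Lambda$ of the rank-$n$ sublattice $\Lambda := N_1\Z \times \cdots \times N_n\Z$, and
\[
\det \Lambda \;=\; \prod_{j=1}^n N_j \;=\; \prod_{p\in S} \prod_{j=1}^n p^{m_{p,j}} \;=\; \prod_{p\in S} m_p(\mcB_p)^{-1}.
\]

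It then remains to count $\#\bigl(\Lambda \cap (B \ast_\w \Omega_\infty - v)\bigr)$. The translated region $B \ast_\w \Omega_\infty - v$ is still bounded and definable in the same o-minimal structure (translation preserves definability), with Lebesgue measure $B^{|\w|} m_\infty(\Omega_\infty)$. The Barroero--Widmer estimate behind Proposition \ref{prop:o-Lip} depends only on Lebesgue measure and on the coordinate projection volumes $V_j$, and both are translation-invariant; consequently the argument in the proof of Proposition \ref{prop:o-Lip} applies verbatim to $\Lambda$ and this translated region, yielding
\[
\#\bigl(\Lambda \cap (B\ast_\w \Omega_\infty - v)\bigr) \;=\; \frac{B^{|\w|} m_\infty(\Omega_\infty)}{\det \Lambda} + O\bigl(B^{|\w|-w_{\min}}\bigr),
\]
with implied constant depending only on $\Omega_\infty$, $\Lambda$ (equivalently $\mcB$), and $\w$. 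Substituting the computed value of $\det \Lambda$ in terms of the local measures gives the claimed main term.

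There is no serious obstacle; the only subtlety is a bookkeeping one, namely verifying that Proposition \ref{prop:o-Lip} remains valid for the translated lattice $v + \Lambda$ (equivalently for the translated target region). Because both the measure and the Barroero--Widmer error-controlling projection volumes are unchanged under translation, this causes no loss. The overall argument is thus a straightforward assembly of the CRT reduction with the weighted Lipschitz principle already established.
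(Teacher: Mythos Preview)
Your proposal is correct and follows essentially the same approach as the paper: the paper's proof simply notes that $\Z^n \cap \prod_{p\in S}\mcB_p$ is a translate of a sublattice of determinant $\prod_{p\in S} m_p(\mcB_p)^{-1}$ and then invokes Proposition~\ref{prop:o-Lip}. Your version is a more detailed execution of the same idea, and in fact you are more careful than the paper in explicitly justifying why the translation by $v$ does not affect the application of the Barroero--Widmer estimate.
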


\begin{proof}
Note that the intersection $\Z^n\cap \prod_{p\in S} \mathcal{B}_p$ is a translate of a sub-lattice of $\Z^n$. This lattice has determinant $\prod_{p\in S}m_p(\mathcal{B}_p)^{-1}$.
 The lemma then follows from Proposition \ref{prop:o-Lip}.
\end{proof}


We now give a version of Lemma \ref{lem:box} which applies for more general sets $\Omega_\infty$, but without an error term.

\begin{corollary}\label{cor:box}
Let $\Omega_\infty\subset \R^n$ be a bounded measurable subset with $m_L(\partial \Omega_\infty)=0$. Let $S$ be a finite set of primes and $\mcB=\prod_{p\in S} \mcB_p$ a box. Then we have that
\begin{align*}
\#\left\{x\in \Z^n \cap B\ast_\w \Omega_\infty: x\in \prod_{p\in S} \mathcal{B}_p \right\}\sim\left(m_L(\Omega_\infty)\prod_{p\in S} m_p(\mathcal{B}_p)\right)B^{|\w|},
\end{align*}
as a function of $B$.
\end{corollary}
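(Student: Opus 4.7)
The plan is to sandwich $\Omega_\infty$ between definable approximations and apply Lemma \ref{lem:box} to the sandwich. Since $\Omega_\infty$ is bounded and $m_\infty(\partial\Omega_\infty)=0$, it is Jordan measurable. Consequently, for every $\varepsilon>0$ one can find finite unions of closed axis-aligned boxes $R^-_\varepsilon\subseteq \Omega_\infty \subseteq R^+_\varepsilon$ with
\[
m_\infty(R^+_\varepsilon)-m_\infty(R^-_\varepsilon)<\varepsilon.
\]
(This is the standard statement that the inner and outer Jordan contents of $\Omega_\infty$ coincide with $m_\infty(\Omega_\infty)$.) Each $R^\pm_\varepsilon$ is semi-algebraic, hence bounded and definable in any o-minimal structure (for example in $\R_{\exp}$), so Lemma \ref{lem:box} applies to them.

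Because $R^-_\varepsilon\subseteq\Omega_\infty\subseteq R^+_\varepsilon$, for every $B>0$ we have the inclusions $B\ast_\w R^-_\varepsilon \subseteq B\ast_\w \Omega_\infty \subseteq B\ast_\w R^+_\varepsilon$, and hence the sandwich
\[
\#\bigl\{x\in\Z^n\cap B\ast_\w R^-_\varepsilon:x\in\textstyle\prod_{p\in S}\mcB_p\bigr\} \le N(B) \le \#\bigl\{x\in\Z^n\cap B\ast_\w R^+_\varepsilon:x\in\textstyle\prod_{p\in S}\mcB_p\bigr\},
\]
where $N(B)$ denotes the cardinality we want to estimate. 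By Lemma \ref{lem:box}, each extreme equals $m_\infty(R^\pm_\varepsilon)\prod_{p\in S}m_p(\mcB_p)\,B^{|\w|}+O_{R^\pm_\varepsilon,\mcB,\w}\!\bigl(B^{|\w|-w_{\min}}\bigr)$.

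Dividing through by $B^{|\w|}$ and letting $B\to\infty$ gives
\[
m_\infty(R^-_\varepsilon)\!\!\prod_{p\in S}\!m_p(\mcB_p)\le\liminf_{B\to\infty}\frac{N(B)}{B^{|\w|}}\le\limsup_{B\to\infty}\frac{N(B)}{B^{|\w|}}\le m_\infty(R^+_\varepsilon)\!\!\prod_{p\in S}\!m_p(\mcB_p).
\]
Since $|m_\infty(R^\pm_\varepsilon)-m_\infty(\Omega_\infty)|<\varepsilon$, letting $\varepsilon\to 0$ squeezes both $\liminf$ and $\limsup$ to the value $m_\infty(\Omega_\infty)\prod_{p\in S}m_p(\mcB_p)$, which is the claimed asymptotic.

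The only point requiring care is that the implied constant in the error term from Lemma \ref{lem:box} depends on the approximating set $R^\pm_\varepsilon$ and can blow up as $\varepsilon\to 0$; this is precisely why the corollary only asserts an asymptotic rather than a uniform error term. Otherwise the argument is essentially routine Jordan-content approximation, so I do not anticipate any genuine obstacle.
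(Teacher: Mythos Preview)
Your proof is correct and follows essentially the same approach as the paper: approximate $\Omega_\infty$ by finite unions of boxes (which are semi-algebraic, hence definable) and invoke Lemma~\ref{lem:box}. The paper's proof is terser---it only writes out the outer covering of $\overline{\Omega}_\infty$ explicitly---while you carry out the full two-sided Jordan approximation, which makes the squeeze argument cleaner and also makes explicit why no error term survives.
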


\begin{proof}
Since the closure $\overline{\Omega}_\infty$ of $\Omega_\infty$ is compact, it can be covered by a finite collection of definable sets $(R_i)_{i\in I}$ (e.g., by boxes in $\R^n$) such that the measure of the union of those definable sets $m_\infty\left(\bigcup R_i\right)$ is arbitrarily close to $m_\infty(\Omega_\infty)$. Applying Lemma \ref{lem:box} to each $R_i$ and summing over the indices $i\in I$ gives the desired asymptotic.
\end{proof}

Let $\Lambda$ be a free $\Z$-module of finite rank and let $\Lambda_\infty\defeq \Lambda\otimes_\Z \R$ and $\Lambda_p\defeq \Lambda\otimes_\Z \Z_p$. Equip $\Lambda_\infty$ and $\Lambda_p$ with Haar measures $m_\infty$ and $m_p$, normalized so that $m_p(\Lambda_p)=1$ for all but finitely many primes $p$.

 An \textbf{(affine) local condition} at a prime $p$ will refer to a subset $\Omega_p\subseteq \Lambda_p$. We are now going to prove two lemmas for counting lattice points in weighted homogeneous spaces with prescribed local conditions. The two cases will be \textit{finitely many local conditions} and \textit{infinitely many local conditions}. The case of infinitely many local conditions addresses a more general situation, but at the cost of not giving an error term. Later in this section we will definite local conditions over number fields.

\begin{lemma}\label{lem:ZnCRTfin}
Let $\Lambda$ be a free $\Z$-module of rank $n$, let $\Omega_\infty\subset \Lambda_\infty$ be a bounded definable subset, and let $S$ be a finite set of primes. For each prime $p$ in the set $S$, let $\Omega_p\subset \Lambda_p$ be a measurable subset with $m_p(\partial\Omega_p)=0$.
Then 
\begin{align*}
&\#\{x\in \Lambda\cap (B \ast_\w \Omega_\infty): x\in \Omega_p \text{ for all primes } p\in S\}\\
&\hspace{5mm} =
\left(\frac{m_\infty(\Omega_\infty)}{m_\infty(\Lambda_\infty/\Lambda)}\prod_{p\in S} \frac{m_p(\Omega_p)}{m_p(\Lambda_p)}\right)B^{|\w|} + O\left(\frac{B^{|\w|-w_{\min}}}{m_\infty(\Lambda_\infty/\Lambda)} \right),
\end{align*}
as a function of $B$, where the implied constant depends only on $\Lambda$, $\Omega_\infty$, $\Omega_p$, and $\w$.
\end{lemma}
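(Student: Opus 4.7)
The strategy is to approximate each local condition $\Omega_p$ by a finite disjoint union of $\Z_p^n$-boxes---made possible by the hypothesis $m_p(\partial\Omega_p)=0$---and then to reduce to Lemma \ref{lem:box}. First, by choosing an integral basis of $\Lambda$, I would identify $\Lambda\cong\Z^n$ and $\Lambda_v\cong K_v^n$; this rescales measures by the covolume of $\Lambda$, accounting for the factor $m_\infty(\Lambda_\infty/\Lambda)$ appearing in the denominator of both the main and error terms.

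For each $p\in S$, the assumption $m_p(\partial\Omega_p)=0$ together with the compactness of $\Lambda_p$ allows one to construct, for every $\delta>0$, finite disjoint unions of boxes $\mcB_p^-(\delta)\subseteq\Omega_p\subseteq\mcB_p^+(\delta)$ with $m_p(\mcB_p^+(\delta)\setminus\mcB_p^-(\delta))<\delta$; concretely, one may take the unions of cosets of $p^k\Lambda_p$ that are contained in, respectively meet, $\Omega_p$, for $k$ sufficiently large. Taking products over $p\in S$ yields global approximations $\mcB^\pm(\delta)=\prod_p\mcB_p^\pm(\delta)$ from inside and outside. Summing Lemma \ref{lem:box} over the finitely many constituent boxes of $\mcB^\pm(\delta)$ gives
\[
N_{\mcB^\pm(\delta)}(B)=m_\infty(\Omega_\infty)\prod_{p\in S}m_p(\mcB_p^\pm(\delta))\,B^{|\w|}+O_\delta\!\bigl(B^{|\w|-w_{\min}}\bigr),
\]
and sandwiching the desired count between $N_{\mcB^-(\delta)}(B)$ and $N_{\mcB^+(\delta)}(B)$ and taking $\delta\to 0$ recovers the leading term $m_\infty(\Omega_\infty)\prod_{p\in S}m_p(\Omega_p)\,B^{|\w|}$.

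The delicate point, and the main obstacle, is extracting the sharp error term $O(B^{|\w|-w_{\min}})$. For fixed $\delta$ the discrepancy between the leading coefficients of $\mcB^\pm(\delta)$ and of $\prod_p\Omega_p$ introduces an error of order $\delta B^{|\w|}$, which is a priori larger than the target $B^{|\w|-w_{\min}}$. To resolve this I would fix a single, sufficiently fine approximation with $\delta$ depending on $\Omega_p$, and track carefully how the Box Lemma's implied constant depends on the chosen decomposition: because each constituent box is a coset of some $p^k\Lambda_p$, the corresponding sublattice of $\Z^n$ has all successive minima equal, so the Barroero--Widmer bound applied coset-by-coset and then summed remains of the form $O(B^{|\w|-w_{\min}})$. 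Both this aggregate Box Lemma error and the approximation discrepancy $\delta B^{|\w|}$ can then be absorbed into the implied constant, which the statement permits to depend on $\Omega_p$.
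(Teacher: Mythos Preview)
Your overall approach is the same as the paper's: reduce to $\Lambda=\Z^n$ via a choice of basis, use compactness and $m_p(\partial\Omega_p)=0$ to approximate each $\Omega_p$ by finite unions of boxes, apply the Box Lemma, and sandwich. The paper phrases the sandwich slightly differently---it covers the closures of $\prod_p\Omega_p$ and of its complement $\prod_p(\Z_p^n\setminus\Omega_p)$ by boxes from the outside, obtaining upper bounds for the count and for its complement---but this is equivalent to your inner/outer approximation.

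You are right to flag the tension between the approximation gap and the claimed error term; however, your proposed resolution does not work. For any fixed $\delta>0$ the discrepancy $\delta\,B^{|\w|}$ is of strictly larger order than $B^{|\w|-w_{\min}}$, so it cannot be ``absorbed into the implied constant'' no matter what that constant is allowed to depend on: the inequality $\delta\,B^{|\w|}\le C\,B^{|\w|-w_{\min}}$ fails for all $B>(C/\delta)^{1/w_{\min}}$. The paper's proof is equally terse at this point, asserting only that the two bounds ``imply the desired asymptotic'' without explaining how the power-saving error survives.

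In fact, under the stated hypotheses the sharp error term can fail. Take $n=1$, $\w=(1)$, $\Lambda=\Z$, $\Omega_\infty=[0,1]$, $S=\{2\}$, and $\Omega_2=\Z_2\setminus C$ with $C=\{0\}\cup\{2^k:k\ge 0\}$. Then $C$ is closed of measure zero, so $m_2(\partial\Omega_2)=0$ and $m_2(\Omega_2)=1$, yet the count equals $\lfloor B\rfloor+1-|C\cap[0,B]|=B-\log_2 B+O(1)$ rather than $B+O(1)$. Your box-approximation argument (and the paper's) does yield the correct main term in general, and it gives the full error term whenever each $\Omega_p$ is a \emph{finite} union of boxes---equivalently clopen---since then one can take $\delta=0$. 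That clopen case is what is actually needed in the later applications.
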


\begin{proof}
Fix an isomorphism $\Lambda\cong \Z^n$. The measures $m_\infty$ and $m_p$ on $\Lambda_\infty$ and $\Lambda_p$ induce measures on $\R^n$ and $\Z_p^n$ which differ from the usual Haar measures by $m_\infty(\Lambda_\infty/\Lambda)$ and $m_p(\Lambda_p)$ respectively. It therefore suffices to prove the result in the case $\Lambda=\Z^n$, $m_\infty=m_L$, and $m_p$ are the usual Haar measures.

 Let $P$ denote the product $\prod_{p\in S} \Omega_p$ and let $Q=\bigcup_{p\in S} \left(\Z_p^n-\Omega_p\right)\prod_{q\in S-\{p\}}\Z_q^n$ be the complement of $P$ in $\Z_p^n$. 
 By assumption, the boundary of $\Omega_p$, $\partial \Omega_p$, which coincides with the boundary of its complement, $\partial(\Z_p^n-\Omega_p)$, has measure zero for all $p\in S$. Thus, by compactness, we may cover the closure of $P$ by a finite number of boxes, $\left(\mcI^{(i)}\right)_{i\in I}$, such that the sum of their measures is arbitrarily close to $\prod_{p\in S} m_p(\Omega_p)$.
Similarly, we may cover the closure of $Q$ by finitely many boxes, $\left(\mcJ^{(j)}\right)_{j\in J}$, such that the sum of their measures is arbitrarily close to $1-\prod_{p\in S} m_p(\Omega_p)$.

 Applying the Box Lemma (Lemma \ref{lem:box}) to the boxes $(\mcI^{(i)})_{i\in I}$, and then summing over the boxes, gives an upper bound for the set
\[
\#\{x\in \Lambda\cap B \ast_\w \Omega_\infty: x\in \Omega_p \text{ for all primes } p\in S\}.
\]
Similarly, applying the Box Lemma to the boxes $(\mcJ^{(j)})_{j\in J}$ and summing, gives an upper bound for the complement of the above set. Together these bounds imply the desired asymptotic.
\end{proof}

Replacing the use of Lemma \ref{lem:box} with Corollary \ref{cor:box} in the above proof, we obtain the following corollary:

\begin{corollary}\label{cor:ZnCRTfin}
Let $\Lambda$ be a free $\Z$-module of finite rank $n$, let $\Omega_\infty\subset \Lambda_\infty$ be a bounded measurable subset with $m_\infty(\partial \Omega_\infty)=0$, and, for each prime $p$ in a finite subset $S$, let $\Omega_p\subset \Lambda_p$ be a measurable subset with $m_p(\partial\Omega_p)=0$.
Then 
\[
\#\{x\in \Lambda\cap B \ast_\w \Omega_\infty: x\in \Omega_p \text{ for all primes } p\in S\}\sim
\left(\frac{m_\infty(\Omega_\infty)}{m_\infty(\Lambda_\infty/\Lambda)}\prod_{p\in S} \frac{m_p(\Omega_p)}{m_p(\Lambda_p)}\right)B^{|\w|}.
\]
\end{corollary}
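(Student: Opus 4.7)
The proof plan is to imitate the argument given for Lemma \ref{lem:ZnCRTfin} essentially verbatim, but replace every appeal to Lemma \ref{lem:box} (the Box Lemma) with its weaker asymptotic counterpart, Corollary \ref{cor:box}, which allows the archimedean set to be merely bounded and measurable with boundary of measure zero (rather than definable). Since the conclusion we are aiming for is also just an asymptotic (no error term), this substitution is legitimate.

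First, I would reduce to the case $\Lambda=\Z^n$ with the standard Haar measures. Choosing any $\Z$-module isomorphism $\Lambda\cong\Z^n$ identifies $\Lambda_\infty\cong\R^n$ and $\Lambda_p\cong\Z_p^n$; the Haar measures $m_\infty$ and $m_p$ pulled back from $\Lambda_\infty$ and $\Lambda_p$ differ from the standard Haar measures on $\R^n$ and $\Z_p^n$ by the constant factors $m_\infty(\Lambda_\infty/\Lambda)$ and $m_p(\Lambda_p)$ respectively. These factors are exactly the ones appearing in the claimed asymptotic, so it suffices to establish the statement when $\Lambda=\Z^n$ with the usual Haar measures.

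Next, I would handle the finite set $S$ of local conditions by approximation. For each prime $p\in S$, the hypothesis $m_p(\partial\Omega_p)=0$ together with compactness of $\Z_p^n$ allows $\overline{\Omega}_p$ to be covered by finitely many $\Z_p^n$-boxes whose total measure lies within an arbitrary $\epsilon>0$ of $m_p(\Omega_p)$; likewise the closure of the complement $\Z_p^n-\Omega_p$ can be covered by finitely many boxes of total measure within $\epsilon$ of $1-m_p(\Omega_p)$. Taking Cartesian products over $p\in S$ produces finite collections $(\mcI^{(i)})_i$ and $(\mcJ^{(j)})_j$ of boxes in $\prod_{p\in S}\Z_p^n$ whose total measures approximate $\prod_{p\in S} m_p(\Omega_p)$ and its complement in $\prod_{p\in S}m_p(\Z_p^n)=1$.

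Now I would apply Corollary \ref{cor:box} to each individual box against the archimedean region $\Omega_\infty$, using the fact that $\Omega_\infty$ is bounded, measurable, and has boundary of measure zero. Summing the resulting asymptotics over the covering $(\mcI^{(i)})_i$ yields an upper bound for
\[
\#\{x\in \Z^n\cap B\ast_\w \Omega_\infty : x\in \Omega_p \text{ for all } p\in S\},
\]
while summing over $(\mcJ^{(j)})_j$ bounds from above the corresponding count for points violating at least one local condition, which together with the count of all points in $\Z^n\cap B\ast_\w \Omega_\infty$ (itself an instance of Corollary \ref{cor:box} with trivial boxes) produces a matching lower bound. Letting $\epsilon\to 0$ pinches these bounds together and delivers the asserted asymptotic. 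There is no genuine obstacle here beyond bookkeeping the approximations: the loss of the error term in Corollary \ref{cor:box} is precisely why the statement is weakened from the equality-with-error-term form of Lemma \ref{lem:ZnCRTfin} to a bare asymptotic.
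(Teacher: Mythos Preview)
Your proposal is correct and matches the paper's approach exactly: the paper's proof is the single sentence ``Replacing the use of Lemma \ref{lem:box} with Corollary \ref{cor:box} in the above proof, we obtain [the corollary],'' and you have spelled out precisely what that replacement entails.
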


We now address the case of infinitely many local conditions.

\begin{lemma}\label{lem:ZnCRTinfty}
 Let $\Lambda$ be a free $\Z$-module of finite rank $n$, let $\Omega_\infty\subset \Lambda_\infty$ be a bounded measurable subset, and for each prime $p$ let $\Omega_p\subset \Lambda_p$ be a measurable subset. Assume that the measures of the boundaries $m_\infty(\partial \Omega_\infty)$ and $m_p(\partial\Omega_p)$ equal zero,
and that
\begin{align}\label{eq:ZnEk}
\lim_{M\to\infty}\limsup_{B\to\infty} \frac{\#\{x\in \Lambda\cap B\ast_\w \Omega_\infty : x\not\in \Omega_p \text{ for some prime }p>M\}}{B^{|\w|}}=0. 
\end{align}
Then 
\[
\#\{x\in \Lambda\cap B \ast_\w \Omega_\infty: x\in \Omega_p \text{ for all primes } p\}\sim\frac{m_\infty(\Omega_\infty)}{m_\infty(\Lambda_\infty/\Lambda)} \left(\prod_p \frac{m_p(\Omega_p)}{m_p(\Lambda_p)}\right)B^{|\w|},
\]
as a function of $B$.
\end{lemma}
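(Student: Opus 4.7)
The plan is a sandwich argument: approximate the set of lattice points meeting every local condition by the subset meeting only the conditions at primes $p \leq M$, estimate the latter via Corollary \ref{cor:ZnCRTfin}, and use hypothesis (\ref{eq:ZnEk}) to force the tail contribution to vanish as $M \to \infty$.

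Concretely, for each positive integer $M$ I introduce the counting functions
\begin{align*}
N(B) &:= \#\{x \in \Lambda \cap B \ast_\w \Omega_\infty : x \in \Omega_p \text{ for every prime } p\},\\
N_M(B) &:= \#\{x \in \Lambda \cap B \ast_\w \Omega_\infty : x \in \Omega_p \text{ for every prime } p \leq M\},\\
E_M(B) &:= \#\{x \in \Lambda \cap B \ast_\w \Omega_\infty : x \notin \Omega_p \text{ for some prime } p > M\},
\end{align*}
which satisfy the inclusion--exclusion sandwich $N_M(B) - E_M(B) \leq N(B) \leq N_M(B)$ for every $B$ and every $M$. Applying Corollary \ref{cor:ZnCRTfin} to the finite set $S = \{p : p \leq M\}$ yields
\[
\lim_{B \to \infty} \frac{N_M(B)}{B^{|\w|}} = \frac{m_\infty(\Omega_\infty)}{m_\infty(\Lambda_\infty/\Lambda)} \prod_{p \leq M} \frac{m_p(\Omega_p)}{m_p(\Lambda_p)}.
\]
Because each $\Lambda_p$ is compact and $\Omega_p \subseteq \Lambda_p$, every factor lies in $[0,1]$, so the partial products are non-increasing in $M$ and therefore converge to $\prod_p m_p(\Omega_p)/m_p(\Lambda_p)$ (possibly to zero).

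Taking $\limsup_{B \to \infty}$ in the upper half of the sandwich and then letting $M \to \infty$ gives
\[
\limsup_{B \to \infty} \frac{N(B)}{B^{|\w|}} \leq \frac{m_\infty(\Omega_\infty)}{m_\infty(\Lambda_\infty/\Lambda)} \prod_p \frac{m_p(\Omega_p)}{m_p(\Lambda_p)}.
\]
For the matching lower bound I take $\liminf_{B \to \infty}$ in the lower half:
\[
\liminf_{B \to \infty} \frac{N(B)}{B^{|\w|}} \geq \frac{m_\infty(\Omega_\infty)}{m_\infty(\Lambda_\infty/\Lambda)} \prod_{p \leq M} \frac{m_p(\Omega_p)}{m_p(\Lambda_p)} - \limsup_{B \to \infty} \frac{E_M(B)}{B^{|\w|}},
\]
and letting $M \to \infty$, the tail error on the right tends to zero by hypothesis (\ref{eq:ZnEk}) while the product converges to the full Euler product. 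Combining the two bounds gives the claimed asymptotic.

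The real content of the lemma has been packaged into hypothesis (\ref{eq:ZnEk}), which serves precisely as the uniform tail bound needed to exchange the two limits $M \to \infty$ and $B \to \infty$. This is also the reason no error term is available, in contrast to Lemma \ref{lem:ZnCRTfin}: (\ref{eq:ZnEk}) is a purely qualitative double-limit statement, so one cannot extract an effective trade-off between the prime cutoff $M$ and the height bound $B$ without further quantitative input.
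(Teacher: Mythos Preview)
Your proof is correct and follows essentially the same strategy as the paper: both use Corollary \ref{cor:ZnCRTfin} on the finitely many conditions at primes $p\leq M$, then invoke hypothesis (\ref{eq:ZnEk}) to control the tail and exchange the limits $B\to\infty$ and $M\to\infty$. Your explicit sandwich $N_M(B)-E_M(B)\leq N(B)\leq N_M(B)$ is a clean way to package the argument; the paper phrases the same thing via the functions $f_{M,M'}(B)$, and establishes convergence of the Euler product by Cauchy's criterion rather than your monotonicity observation, but these are cosmetic differences.
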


\begin{proof}
As in the proof of Lemma \ref{lem:ZnCRTfin}, we may reduce to the case in which $\Lambda=\Z^n$ and $m_\infty$ and $m_p$ are the usual Haar measures on $\R^n$ and $\Z_p^n$.

 For $M\leq M'\leq \infty$ and $B>0$, define the function
\[
f_{M,M'}(B)\defeq \frac{1}{B^{|\w|}}\#\{x\in \Z^n\cap B\ast_\w\Omega_\infty : x\in \Omega_p \text{ for all primes } p\in [M,M')\}
\]
and let $f_M(B)\defeq f_{1,M}(B)$. Note that $f_M(B)\geq f_{M+r}(B)$ for all $r\in\Z_{\geq 0} \cup \{\infty\}$. The hypothesis (\ref{eq:ZnEk}) implies that
\begin{align}\label{eq:pfZnEk1}
\lim_{M\to\infty} \limsup_{B\to\infty} (f_M(B)-f_\infty(B))=0.
\end{align}
From Corollary \ref{cor:ZnCRTfin} it follows that, for all $M<M'<\infty$,
\begin{align}\label{eq:pfZnEk2}
\lim_{B\to \infty} f_{M,M'}(B)=m_\infty(\Omega_\infty) \prod_{M\leq p<M'} m_p(\Omega_p).
\end{align}
As $f_\infty(B)$ is non-decreasing, 
\[
\limsup_{B\to \infty} f_\infty(B) = \lim_{B\to \infty} f_\infty(B).
\]
Combining this with the limits (\ref{eq:pfZnEk1}) and (\ref{eq:pfZnEk2}) gives
\[
\lim_{B\to\infty} f_\infty(B)=\lim_{M\to\infty}\lim_{B\to\infty} f_M(B)=m_\infty(\Omega_\infty)\lim_{M\to\infty}\prod_{p<M} m_p(\Omega_p).
\]

The above infinite product converges by Cauchy's criterion since the limits (\ref{eq:ZnEk}) and (\ref{eq:pfZnEk2}) together imply that
\[
\lim_{M\to\infty}\sup_{r\in \Z_{\geq 1}}\left|1-\prod_{M\leq p<M+r} m_p(\Omega_p)\right|=\frac{1}{m_\infty(\Omega_\infty)} \lim_{M\to\infty} \sup_{r\in \Z_{\geq 1}}\lim_{B\to\infty} |f_M(B)-f_{M,M+r}(B)|=0.
\]
The desired result then follows.
\end{proof}

If $K$ is a number field of degree $d$ over $\Q$ with $r_1$ real embeddings and $r_2$ pairs of complex embeddings into $\C$ and with discriminant $\Delta_K$, then its ring of integers $\O_K$ may naturally be viewed as a rank $d$ lattice in $K_\infty\defeq \O_K\otimes_{\Z} \R=\prod_{v|\infty} K_v$, with covolume $|\Delta_K|^{1/2}$ with respect to the usual Haar measure $m_\infty$ on $K_\infty$ (which differs from Lebesgue measure on $K_\infty\cong \R^{r_1+2r_2}$ by a factor of $2^{r_2}$) \cite[Chapter I Proposition 5.2]{Neu99}. 
More generally, any integral ideal $\a\subseteq \O_K$ may be viewed as a lattice in $K_\infty$ with covolume $N_{K/\Q}(\a)|\Delta_K|^{1/2}$. Define the lattice
\[
\a^\w\defeq \a^{w_1}\times\cdots \times \a^{w_n}\subset K_\infty^{n}.
\]
This lattice has covolume $N_{K/\Q}(\a)^{|\w|}|\Delta_K|^{n/2}$. For example, in the case that $\a=\O_K$ and $w_i=1$ for all $i$, one has the lattice $\Lambda_\a=\O_K^{n}$ of covolume $|\Delta_K|^{n/2}$.

In later applications we will be interested in certain sets of lattices. 

\begin{definition}
Let $\mfD\subseteq \O_K$ be an ideal. A \textbf{$\mfD$-admissible system of lattices} $(\Lambda(\a))_{\a\subseteq \O_K}$ is a set of sublattices $\Lambda(\a)\subset \a^\w$ indexed by ideals $\a\subset \O_K$, with the property that if $\a,\c,\n_0,\n_1 \subseteq \O_K$ are ideals satisfying
 \begin{itemize}
 	\item $\a=\c\n_0\n_1$,
 	\item $\n_0$ relatively prime to $\mfD$, and
 	\item $\n_1\subseteq \mfD$,  	
 \end{itemize}
  then $\Lambda(\a) = \Lambda(\c\n_1) \cap (\c\n_0)^{\w}$.
\end{definition}

 For $(\Lambda(\a))_{\a\subseteq \O_K}$ a $\mfD$-admissible systems of lattices, 
\begin{align*}
m_\infty(K_\infty^{n}/\Lambda(\a))
&=[\c^\w:\Lambda(\a)]\cdot m_\infty(K_\infty^{n}/\c^\w)\\
&=[\c^\w:\Lambda(\c\n_1)\cap (\c\n_0)^{\w}]\cdot m_\infty(K_\infty^{n}/\c^\w)\\
&=[\c^\w:(\c\n_0)^\w]\cdot [\c^\w : \Lambda(\c\n_1)] \cdot N_{K/\Q}(\c)^{|\w|}|\Delta_K|^{n/2}\\
&=N_{K/\Q}(\n_0)^{|\w|} \frac{[\O_K^{n}:\Lambda(\c\n_1)]}{N_{K/\Q}(\c)^{|\w|}} N_{K/\Q}(\c)^{|\w|}|\Delta_K|^{n/2}\\
&=N_{K/\Q}(\n_0)^{|\w|} [\O_K^{n}:\Lambda(\c\n_1)]\cdot |\Delta_K|^{n/2}.
\end{align*}

 For any rational prime $p$, we have $\O_K\otimes_{\Z} \Z_p=\prod_{\p|p} \O_{K,\p}$. Equip each $\O_{K,\p}$ with the Haar measure $m_\p$ normalized so that $m_\p(\O_{K,\p})=1$. These measures induce measures on $\O_{K,\p}^n$, which we will also denote by $m_\p$.
 
 An \textbf{(affine) local condition} at a finite place $v\in \Val_0(K)$ will refer to a subset $\Omega_v\subseteq \O_{K,\p_v}^n$. We will sometimes write $\Omega_{\p_v}$ for $\Omega_v$.
 
Applying Lemma \ref{lem:ZnCRTfin} with $\Lambda_\p=\O_{K,\p}^n$, we obtain the following:

\begin{proposition}\label{prop:KCRTfin}
Let $K/\Q$ be a number field of discriminant $\Delta_K$ and degree $d$ over $\Q$. Let $(\Lambda(\a))_{\a\subseteq \O_K}$ be a $\mfD$-admissible system of lattices, and let $\a,\c,\n_0,\n_1\subseteq \O_K$ be integral ideals such that $\Lambda(\a)=\Lambda(\c\n_1)\cap (\c\n_0)^{\w}$. Let $\Omega_\infty\subset K_\infty^{n}$ be a bounded definable subset. Let $S$ be a finite set of prime ideals of $\O_K$, and for each $\p\in S$ let $\Omega_\p\subset \O_{K,\p}^n$ be a subset whose boundary 
$\partial \Omega_\p$ has measure $m_\p(\partial \Omega_\p)=0$.
Then 
\begin{align*}
&\#\{x\in \Lambda(\a)\cap (B \ast_\w \Omega_\infty) : x\in \Omega_\p \text{ for all primes } \p\in S\}\\
&\hspace{0cm}=\frac{m_\infty(\Omega_\infty)}{N_{K/\Q}(\n_0)^{|\w|} [\O_K^{n}:\Lambda(\c\n_1)]\cdot |\Delta_K|^{n/2}} \left(\prod_{\p\in S} m_\p(\Omega_\p)\right)B^{d|\w|}+ 
 O\left(B^{d|\w|-w_{\min}} \right),
\end{align*}
as a function of $B$, where the implied constant depends only on $K$, $\Omega_\infty$, $\Omega_\p$, and $\w$.
\end{proposition}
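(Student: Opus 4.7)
The plan is to deduce this from Lemma \ref{lem:ZnCRTfin} by choosing a $\Z$-basis of $\O_K$ and translating everything to a lattice-point count in $\R^{dn}$. Concretely, a choice of integral basis gives a $\Z$-linear identification $K_\infty = \O_K \otimes_\Z \R \cong \R^d$, and hence $K_\infty^{n} \cong \R^{dn}$; under this identification the integral ideal $\a \subseteq \O_K$ becomes a sublattice of $\R^d$ and $\Lambda(\a) \subseteq \a^\w \subset K_\infty^n$ becomes a rank $dn$ lattice in $\R^{dn}$. The covolume is exactly the quantity
\[
m_\infty(K_\infty^n/\Lambda(\a)) = N_{K/\Q}(\n_0)^{|\w|}\, [\O_K^n : \Lambda(\c\n_1)] \cdot |\Delta_K|^{n/2}
\]
already computed in the paragraph preceding the proposition, so no new covolume calculation is needed.

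Next I would translate the weighted scaling and the local conditions. The action $B \ast_\w$ on $K_\infty^n$ scales the $i$-th factor $K_\infty$ by $B^{w_i}$; under the identification $K_\infty^n \cong \R^{dn}$ this becomes the weighted scaling by the \emph{expanded} weight vector $\tilde\w = (w_1,\ldots,w_1,w_2,\ldots,w_2,\ldots,w_n,\ldots,w_n) \in \Z_{>0}^{dn}$, in which each $w_i$ is repeated $d$ times. Thus $|\tilde\w| = d|\w|$ and $\tilde w_{\min} = w_{\min}$, which gives precisely the exponents $d|\w|$ and $d|\w| - w_{\min}$ appearing in the target formula. For the finite set $S$ of prime ideals, group them by the rational prime below: for each rational prime $p$ with some $\p \in S$ lying above it, set $\tilde\Omega_p = \prod_{\p | p} \Omega'_\p \subset (\O_K \otimes_\Z \Z_p)^n \cong \Z_p^{dn}$, where $\Omega'_\p = \Omega_\p$ if $\p \in S$ and $\Omega'_\p = \O_{K,\p}^n$ otherwise. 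Since $m_\p(\partial \Omega_\p) = 0$ for $\p \in S$, this product satisfies $m_p(\partial \tilde\Omega_p) = 0$, and the normalizations $m_\p(\O_{K,\p}) = 1$ yield $m_p(\tilde\Omega_p) = \prod_{\p \in S,\, \p | p} m_\p(\Omega_\p)$.

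With these translations in place I would apply Lemma \ref{lem:ZnCRTfin} to the lattice $\Lambda(\a) \subset \R^{dn}$, the bounded definable set $\Omega_\infty \subset \R^{dn}$, the finitely many local conditions $\tilde\Omega_p$, and the weight vector $\tilde\w$. The main term from the lemma is
\[
\frac{m_\infty(\Omega_\infty)}{m_\infty(\R^{dn}/\Lambda(\a))} \prod_p \frac{m_p(\tilde\Omega_p)}{m_p(\Lambda(\a) \otimes \Z_p)} \cdot B^{|\tilde\w|},
\]
and substituting the covolume formula above together with $m_p(\tilde\Omega_p) = \prod_{\p|p,\,\p \in S} m_\p(\Omega_\p)$ (and the fact that all but finitely many local factors are $1$) reproduces the stated leading coefficient; the error term $O(B^{|\tilde\w| - \tilde w_{\min}})$ becomes $O(B^{d|\w| - w_{\min}})$. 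The implied constant depends on $\Omega_\infty$, the $\Omega_\p$, $\Lambda(\a)$, and $\tilde\w$, equivalently on $K$, $\Omega_\infty$, the $\Omega_\p$, and $\w$.

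The main obstacle is really just bookkeeping: verifying that the archimedean Haar measure $m_\infty$ on $K_\infty^n$ (normalized so that $m_\infty(K_\infty/\O_K) = |\Delta_K|^{1/2}$) and the product of the local measures $m_\p$ on $\O_{K,\p}$ agree with the standard choices used in Lemma \ref{lem:ZnCRTfin} once a $\Z$-basis has been fixed, and that the covolume factor $[\O_K^n : \Lambda(\c\n_1)]$ absorbs the discrepancy between $\Lambda(\a)$ and $\a^\w$. No further geometric input is required since the definability of $\Omega_\infty$ and the vanishing $m_\p(\partial \Omega_\p) = 0$ transfer verbatim to the $\R^{dn}$ side.
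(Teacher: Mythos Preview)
Your proposal is correct and is precisely the paper's approach: the paper states the proposition as an immediate consequence of Lemma~\ref{lem:ZnCRTfin} (the one-line ``proof'' is literally ``Applying Lemma~\ref{lem:ZnCRTfin} we obtain the following Proposition''), and you have simply spelled out the bookkeeping---the expanded weight vector $\tilde\w$ with $|\tilde\w|=d|\w|$ and $\tilde w_{\min}=w_{\min}$, the grouping of $\p\in S$ by the rational prime below, and the covolume identity already computed in the preceding paragraph---that makes this application go through.
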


Similarly, applying Lemma \ref{lem:ZnCRTinfty} gives the corresponding proposition for infinitely many local conditions.

\begin{proposition}\label{prop:KEk}
Let $K/\Q$ be a number field of discriminant $\Delta_K$ and degree $d$ over $\Q$. 
Let $(\Lambda(\a))_{\a\subseteq \O_K}$ be a $\mfD$-admissible system of lattices, and let $\a,\c,\n_0,\n_1\subseteq \O_K$ be integral ideals such that $\Lambda(\a)=\Lambda(\c\n_1)\cap (\c\n_0)^{\w}$.
 Let $\Omega_\infty\subset K_\infty^n$ be a bounded subset and let $\Omega_\p\subset \O_{K,\p}^n$ for each prime ideal $\p$ of $\O_K$. Assume that $m_\infty(\partial \Omega_\infty)=0$ and $m_\p(\partial \Omega_\p)=0$ for each prime $\p$
and that
\begin{align}\label{eq:KEk}
\lim_{M\to\infty}\limsup_{B\to\infty} \frac{\#\{x\in \Lambda(\a)\cap (B\ast_\w \Omega_\infty) : x\not\in \Omega_{\p} \text{ for some prime $\p$ with }N_{K/\Q}(\p)>M\}}{B^{d|\w|}}=0.
\end{align}
Then 
\begin{align*}
&\#\{x\in \Lambda(\a)\cap (B \ast_\w \Omega_\infty): x\in \Omega_\p \text{ for all primes } \p\}\\
&\qquad \sim\frac{m_\infty(\Omega_\infty)}{N_{K/\Q}(\n_0)^{|\w'|} [\O_K^n : \Lambda(\c\n_1)]\cdot |\Delta_K|^{n/2}} \left(\prod_\p m_\p(\Omega_\p)\right) B^{d|\w|},
\end{align*}
as a function of $B$.
\end{proposition}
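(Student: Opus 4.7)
The plan is to adapt the proof of Lemma \ref{lem:ZnCRTinfty} to the number field setting, invoking Proposition \ref{prop:KCRTfin} (the finitely-many-conditions analogue) in place of Corollary \ref{cor:ZnCRTfin}. For $1 \leq M \leq M' \leq \infty$ and $B > 0$, define
\[
f_{M,M'}(B) := \frac{1}{B^{d|\w|}}\#\{x\in\Lambda(\a)\cap B\ast_\w \Omega_\infty : x\in \Omega_\p \text{ for all primes $\p$ with } M \leq N(\p) < M'\},
\]
and write $f_M(B) := f_{1,M}(B)$. Set
\[
C := \frac{m_\infty(\Omega_\infty)}{N(\n_0)^{|\w|}[\O_K^n:\Lambda(\c\n_1)]\,|\Delta_K|^{n/2}},
\]
which is the constant appearing in the conclusion.

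Since imposing more local conditions cannot increase the count, we have $f_M(B)\geq f_{M+r}(B)\geq f_\infty(B)$ for all $r\in\Z_{\geq 0}\cup\{\infty\}$. The hypothesis (\ref{eq:KEk}) then gives
\[
\lim_{M\to\infty}\limsup_{B\to\infty}(f_M(B)-f_\infty(B)) = 0,
\]
since any point counted by $f_M(B)-f_\infty(B)$ satisfies the conditions at primes of norm $<M$ but fails at some prime of norm $\geq M$. For any fixed $M<M'<\infty$, the set of primes $\p$ with $M\leq N(\p)<M'$ is finite, so Proposition \ref{prop:KCRTfin} yields
\[
\lim_{B\to\infty} f_{M,M'}(B) = C\prod_{M\leq N(\p)<M'} m_\p(\Omega_\p).
\]
Combining the two displays gives
\[
\lim_{B\to\infty} f_\infty(B) = \lim_{M\to\infty}\lim_{B\to\infty} f_M(B) = C\lim_{M\to\infty}\prod_{N(\p)<M} m_\p(\Omega_\p),
\]
which is the desired asymptotic provided the infinite product converges.

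For convergence, I apply Cauchy's criterion. Using $\lim_{B\to\infty}f_0(B) = C$ (Proposition \ref{prop:KCRTfin} with no local conditions) together with the formula for $\lim_{B\to\infty}f_{M,M+r}(B)$, we find
\[
\Bigl|1 - \prod_{M\leq N(\p)<M+r} m_\p(\Omega_\p)\Bigr| = \frac{1}{C}\lim_{B\to\infty}\bigl|f_0(B)-f_{M,M+r}(B)\bigr|,
\]
and the right-hand side is bounded uniformly in $r$ by $C^{-1}\limsup_{B\to\infty}g_M(B)$, where $g_M(B)$ denotes the normalized count appearing inside the limit in hypothesis (\ref{eq:KEk}); this tends to $0$ as $M\to\infty$. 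The main obstacle is bookkeeping: one has to verify that the covolume factor $N(\n_0)^{|\w|}[\O_K^n:\Lambda(\c\n_1)]\,|\Delta_K|^{n/2}$ is carried consistently through each application of Proposition \ref{prop:KCRTfin}, but this was already computed in the preamble preceding that proposition, so no new calculation is needed.
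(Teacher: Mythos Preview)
Your argument reproduces the proof of Lemma~\ref{lem:ZnCRTinfty} in the number field setting, which is precisely the paper's approach: the paper simply says ``applying Lemma~\ref{lem:ZnCRTinfty} gives the corresponding proposition,'' meaning one views $\Lambda(\a)$ as a rank-$dn$ free $\Z$-module, bundles the conditions at primes $\p\mid p$ into a single condition at $p$ via $\O_K\otimes\Z_p=\prod_{\p\mid p}\O_{K,\p}$, and invokes the already-proved $\Z$-lattice result. So the overall strategy matches.

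There is one small but genuine slip. You invoke Proposition~\ref{prop:KCRTfin} to compute $\lim_{B\to\infty} f_{M,M'}(B)$, but that proposition requires $\Omega_\infty$ to be \emph{definable}, whereas Proposition~\ref{prop:KEk} only assumes $\Omega_\infty$ is bounded with $m_\infty(\partial\Omega_\infty)=0$. The correct input at this step is the number-field analogue of Corollary~\ref{cor:ZnCRTfin} (which drops definability in exchange for losing the error term), obtained from Corollary~\ref{cor:ZnCRTfin} exactly as Proposition~\ref{prop:KCRTfin} is obtained from Lemma~\ref{lem:ZnCRTfin}. Alternatively, and more directly, you can bypass the rerun entirely and cite Lemma~\ref{lem:ZnCRTinfty} itself, as the paper does; the covolume identity $m_\infty(K_\infty^n/\Lambda(\a))=N(\n_0)^{|\w|}[\O_K^n:\Lambda(\c\n_1)]\,|\Delta_K|^{n/2}$ computed just before Proposition~\ref{prop:KCRTfin} is all that is needed to extract the stated constant.
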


The following lemma gives a useful criterion for checking the limit condition (\ref{eq:KEk}) of Proposition \ref{prop:KEk}.

\begin{lemma}\label{lem:Ek-Bha}
Let $K$ be a number field and $Y\subset \A_{\O_K}^n$ a closed subscheme of codimension $k>1$. Let $\Omega_\infty\subset K_\infty^n$ be a bounded subset with $m_\infty(\partial \Omega_\infty)=0$ and $m_\infty(\Omega_\infty)>0$. For each prime ideal $\p\subset \O_K$, let
\[
\Omega_\p\defeq \{x\in \O_{K,\p}^n : x\Mod{\p}\not\in Y(\F_\p)\}.
\]
Then
\begin{align*}
& \#\{x\in \O_K^n \cap B\ast_\w\Omega_\infty : x\Mod{\p} \not\in \Omega_\p \text{ for some prime } \p \text{ with } N_{K/\Q}(\p)>M\} \\
&\qquad =O\left(\frac{B^{|\w|d}}{M^{k-1}\log(M)}\right),
\end{align*}
as a function of $B$, where the implied constant depends only on $\Omega_\infty$ and $Y$. In particular, the limit condition (\ref{eq:KEk}) holds in this situation.
\end{lemma}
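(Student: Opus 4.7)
The proof is a weighted, number-field incarnation of the Ekedahl--Bhargava geometric sieve. By a union bound the quantity to estimate is at most $\sum_{N(\p) > M} N_\p(B)$, where
\[
N_\p(B) := \#\{x \in \O_K^n \cap B \ast_\w \Omega_\infty : x \bmod \p \in Y(\F_\p)\}.
\]
The plan is to split this sum at a threshold $T = T(B)$ (chosen large, say super-polynomial in $\log B$) into a small-prime range $M < N(\p) \leq T$ and a large-prime range $N(\p) > T$, and to bound each range separately.

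For the small primes, since $Y$ has codimension $k$ in $\A^n_{\O_K}$, away from a finite set of bad primes one has $|Y(\F_\p)| = O(N(\p)^{n-k})$ by an elementary dimension count (or by Lang--Weil). For each residue $y \in Y(\F_\p)$, the set of $x \in \O_K^n$ with $x \equiv y \pmod \p$ is a coset of the sublattice $\p \O_K^n$ of covolume $N(\p)^n |\Delta_K|^{n/2}$ in $K_\infty^n \cong \R^{dn}$; applying the weighted Principle of Lipschitz (Proposition~\ref{prop:o-Lip}) to this coset bounds the count of such $x$ lying in $B \ast_\w \Omega_\infty$ by $O(B^{|\w|d}/N(\p)^n)$ plus a lower-order error. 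Summing over the $O(N(\p)^{n-k})$ cosets gives $N_\p(B) \ll B^{|\w|d}/N(\p)^k$, and the prime ideal theorem together with partial summation yields
\[
\sum_{M < N(\p) \leq T} \frac{B^{|\w|d}}{N(\p)^k} \ll \frac{B^{|\w|d}}{M^{k-1}\log M},
\]
since $k > 1$.

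For the large primes the box-count bound becomes useless, so the plan is to invoke a polynomial/resultant trick. Because $Y$ has codimension at least $2$, its vanishing ideal in $K[x_1,\ldots,x_n]$ contains two polynomials $F_1, F_2$ with no common irreducible factor, and after clearing denominators we may take $F_1, F_2 \in \O_K[x_1,\ldots,x_n]$. Any $x$ with $x \bmod \p \in Y(\F_\p)$ satisfies $\p \mid (F_1(x))$ and $\p \mid (F_2(x))$; eliminating $x_1$ produces a nonzero resultant $R \in \O_K[x_2,\ldots,x_n]$ such that $\p \mid (R(x_2,\ldots,x_n))$ whenever the latter is nonzero. The locus $\{R = 0\}$ has codimension $\geq 1$ in $\A^{n-1}_{\O_K}$, and Proposition~\ref{prop:o-Lip} applied to an $(n-1)$-dimensional slice (iterated if necessary) shows its contribution is of strictly lower order in $B$. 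For the remaining $x$, the algebraic integer $R(x_2,\ldots,x_n)$ has absolute norm at most $B^C$ for some $C$ depending on $\deg R$ and $\w$, so it has $O(\log B/\log T)$ prime ideal divisors of norm exceeding $T$. Taking $T$ to grow faster than any polynomial in $\log B$ forces $\log B/\log T \to 0$, and the large-prime contribution becomes $o(B^{|\w|d}/(M^{k-1}\log M))$ for fixed $M$ as $B \to \infty$.

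Combining both ranges yields the stated bound $O(B^{|\w|d}/(M^{k-1}\log M))$, and condition~(\ref{eq:KEk}) follows at once: dividing by $B^{|\w|d}$, taking $\limsup_B$, and then $\lim_M$, returns $0$ since $(M^{k-1}\log M)^{-1} \to 0$ as $M \to \infty$. The main obstacle I anticipate is the large-prime step: one must verify that $F_1, F_2$ can be chosen with $\O_K$-integral coefficients of controlled size (which uses that $\O_K$ is Dedekind together with the codimension $\geq 2$ hypothesis) and carefully bound the weighted integer-point count on the resultant locus $\{R = 0\}$, potentially iterating the polynomial trick to peel off successive subvarieties until a zero-dimensional stratum is reached.
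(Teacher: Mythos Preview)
Your sketch is correct and is precisely the Ekedahl--Bhargava geometric sieve argument; the paper's own ``proof'' of this lemma is simply a citation to Bhargava \cite[Theorem 3.3]{Bha14} together with the remarks that the method extends to arbitrary number fields \cite[Theorem 21]{BSW15} and to the weighted setting \cite[p.~4]{BSW21}. In other words, you have unpacked the content of the references the paper invokes, and your small-prime/large-prime dichotomy with the resultant trick in the tail is exactly Bhargava's strategy.
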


\begin{proof}
The case $K=\Q$ is due to Bhargava \cite[Theorem 3.3]{Bha14} (which generalized a result of Ekedahl \cite{Eke91}). Bhargava's method generalizes to arbitrary number fields (as noted in \cite[Theorem 21]{BSW15}) and to the weighted case (as noted in \cite[pg. 4]{BSW22}).
\end{proof}

\section{Counting points on weighted projective stacks}\label{sec:WPROJ}

In this section we prove our results for counting points of bounded height on weighted projective stacks which satisfy prescribed local conditions.

We fix some notation that will be used throughout this section. Let $K$ be a degree $d$ number field over $\Q$ with ring of integers $\O_K$. Let $\Cl_K$ denote the ideal class group of $K$ and let $h_K\defeq \#\Cl_K$ be the class number. Let $\c_1,\dots,\c_{h_K}$ be a set of integral ideal representatives for $\Cl_K$. Let $\varpi_K$ be the number of roots of unity in $K$ and let
\[
 \varpi_{K,\w}\defeq \frac{\varpi_K}{\gcd(w_0,w_1,\dots,w_n,\varpi_K)}.
 \]

Let $\varphi_f:\mcP(\w')\to \mcP(\w)$ be a morphism of $n$-dimensional weighted projective stacks with finite defect (i.e., $f$ has finite defect). Recall that $\mfD_f$ is the set of defects of $f$ (Definition \ref{def:Defect}) and $\widetilde{\mfD}_f$ is the set of prime ideals which divide at least one defect. Define $\widehat{\mfD}_f$ to be the set of ideals of $\O_K$ divisible only by prime ideals in $\widetilde{\mfD}_f$.

A subset $\Omega_v\subseteq \mcP(\w')(K_v)$ will be referred to as a \textbf{(projective) local condition} at the place $v\in \Val(K)$. Let $\Omega_v^{\aff}\subseteq K_v^{n+1}$ denote the affine cone of $\Omega_v$.
For any set of local conditions, $(\Omega_v)_{v\in V}$,  define the sets
\begin{align*}
\Omega &\defeq\{x \in \mcP(\w')(K): x\in \Omega_v \text{ for all } v\in V\},\\
\Omega_\infty &\defeq \{x \in \mcP(\w')(K) : x\in \Omega_v \text{ for all } v\in V\cap \Val_\infty(K)\}.
\end{align*}
Let $\Omega_\infty^{\aff}\defeq \prod_{v\in \Val_\infty(K)} \Omega_v^{\aff}\subseteq K_\infty^{n+1}$. 

For $\a\subset \O_K$ and ideal let
\[
\mu(\a)=\begin{cases}
(-1)^k & \text{ if } \a \text{ is the product of $k$ distinct prime ideals},\\
0 & \text{ if } \a \text{ is divisible by the square of a prime ideal}.
\end{cases}
\]
denote the M\"obius function.

\subsection{Finitely many local conditions}\label{subsec:WPROJfinite}

We will call a finite set of projective local conditions $(\Omega_v)_{v\in V}$ \textbf{admissible} if it satisfies each of the following properties:
\begin{enumerate}[label=(\roman*)]
\item The set $V$ contains all infinite places (i.e., $\Val_\infty(K)\subseteq V$), and $\Omega_\infty^{\aff}$ is definable in $K_\infty^{n+1}$.
\item For each finite place $v\in V\cap \Val_0(K)$, the set $\Omega_v^{\aff}$ is measurable in $K_v^{n+1}$ with boundary of measure zero.
\end{enumerate}

\begin{reptheorem}[\ref{thm:WProjFin}]
Let $(\Omega_v)_{v\in V}$ be an admissible set of projective local conditions with $V=\Val_\infty(K)\sqcup S$. 
Then there exists an explicit constant $\kappa$ such that
\begin{align*}
\#\{x\in \mcP(\w')(K): \Ht_{\w}(\varphi_f(x))\leq B, x\in \Omega\}
= \kappa B^{|\w'|/e(f)}
 + \begin{cases}
O\left(B^{\frac{1}{e(f)}}\log(B)\right) &  {\substack{\text{ if } \w'=(1,1)\\ \text{and}\ K=\Q,}}\\
O\left(B^{\frac{d|\w'|-w'_{\min}}{d e(f)}}\right) & \text{ else, }
\end{cases}
\end{align*}
as a function of $B$. 
\end{reptheorem}

In the proof of the above theorem we will define lattices $\Lambda(\c\n_1)$, a counting function $\# V(\c_i\n_1,\d)$ (which counts certain translates of the lattices $\Lambda(\c\n_1)$), and construct a fundamental domain $\mcF(1)$  for the $\w'$-weighted action of $\O_K^\times$ on $K_\infty^{n+1}$. Then, setting
\[
\kappa'=\frac{m_\infty(\mcF(1))}{\varpi_{K,\w'} |\Delta_K|^{(n+1)/2}} 
m_\infty(\Omega_\infty^{\aff}\cap \mcF(1)) 
\prod_{v\in S} m_v(\Omega^{\aff}_v \cap \O_{K,v}^{n+1}),
\]
an expression for the leading coefficient $\kappa$ is
\[
\kappa=\frac{\kappa'}{\zeta_K(|\w'|)}
\sum_{i=1}^{h_K}\sum_{\d\in \mfD_f}
\sum_{\n_1\in \widehat{\mfD}_f}\mu(\n_1) \frac{\# V(\c_i\n_1,\d) N_{K/\Q}(\c_i)^{|\w'|} N_{K/\Q}(\d)^{\frac{|\w'|}{e(f)}}}{[\O_K^{n+1}:\Lambda(\c_i\n_1)]} \prod_{\p\in \widetilde{\mfD}_f} \frac{1}{1-N_{K/\Q}(\p)^{-|\w'|}}.
\]

\begin{proof}
We have the following partition of the set of $K$-rational points $\mcP(\w')(K)$ into points whose scaling ideals are in the same ideal class:
\[
\mcP(\w')(K)=\bigsqcup_{i=1}^h \{x\in \mcP(\w')(K): [\mfI_{\w'}(x)]=[\c_i]\}.
\] 
This is well defined since the ideal class of a scaling ideal $[\mfI_\w(x)]$ does not depend on the representative of $x$ \cite[Proposition 3.3]{Den98}.

For each $\c\in \{\c_1,\dots,\c_h\}$ consider the counting function
\[
M(\Omega,\c,B)\defeq \#\{x\in \mcP(\w')(K): \Ht_\w(\varphi_f(x))\leq B,\ [\mfI_{\w'}(x)]=[\c],\ x\in \Omega\}.
\]

Consider the (weighted) action of the unit group $\O_K^\times$ on $K^{n+1}-\{0\}$ given by 
\[
u\ast_{\w'}(x_0,\dots,x_n)=(u^{w'_0}x_0,\dots,u^{w'_n}x_n),
\]
 and let $(K^{n+1}-\{0\})/\O_K^\times$ denote the corresponding set of orbits. Let $\Omega^\aff$ be the affine cone of $\Omega$ (i.e., the pullback of $\Omega$ along the map $\A^{n+1}-\{0\}\to \mcP(\w')$). We may describe $M(\Omega,\c,B)$ in terms of $\O_K^\times$-orbits of an affine cone. In particular, there is a bijection between the set
\[
 \{[(x_0,\dots,x_n)]\in (K^{n+1}-\{0\})/\O_K^\times : \Ht_\w(\varphi_f(x))\leq B,\ \mfI_{\w'}(x)=\c,\ x\in \Omega^\aff\}
 \]
and the set
\[
\{[x_0:\cdots:x_n]\in \mcP(\w')(K): \Ht_\w(\varphi_f(x))\leq B,\ [\mfI_{\w'}(x)]=[\c],\ x\in \Omega\},
\]
 given by
 \[
 [(x_0,\dots,x_n)] \mapsto [x_0:\dots:x_n].
 \]
Therefore
\[
M(\Omega,\c,B)=\#\{x\in (K^{n+1}-\{0\})/\O_K^\times : \Ht_\w(\varphi_f(x))\leq B,\ \mfI_{\w'}(x)=\c,\ x\in \Omega^\aff\}.
\]
Recall that the defect of $x$ is defined as 
\[
\delta_f(x)\defeq \frac{\mfI_\w(f(x))}{\mfI_{\w'}(x)^{e(f)}},
\]
and the set of defects is denoted
\[
\mfD_f\defeq \{\delta_f(x) : x\in \mcP(\w')(K)\}.
\]
For ideals $\d\in \mfD_f$ and $\a\subseteq \O_K$, define the counting function
\begin{align*}
&M( \Omega, \a, \d, B)\\
&\quad \defeq \#\{x\in (K^{n+1}-\{0\})/\O_K^\times : \Ht_\w(\varphi_f(x))\leq B,\ \mfI_{\w'}(x)=\a,\ \delta_f(x)=\d,\ x\in \Omega^\aff\}\\
&\quad =\#\left\{x\in (K^{n+1}-\{0\})/\O_K^\times : \frac{|\varphi_f(x)|_{\w,\infty}}{N_{K/\Q}(\a^{e(f)}\d)}\leq B,\ \mfI_{\w'}(x)=\a,\ \delta_f(x)=\d,\ x\in \Omega^\aff\right\}.
\end{align*}
The counting functions we have defined are related as follows:
\[
M(\Omega,\c,B)=\sum_{\d\in \mfD_f} M(\Omega, \c,\d,B).
\]
Our general strategy will be to first find an asymptotic for the counting function
\begin{align*}
& M'(\Omega,\a,\d,B)\\
&\quad \defeq \#\left\{x\in (K^{n+1}-\{0\})/\O_K^\times : \frac{|\varphi_f(x)|_{\w,\infty}}{N_{K/\Q}(\a^{e(f)}\d)}\leq B,\ \mfI_{\w'}(x)\subseteq \a,\ \delta_f(x)=\d,\ x\in \Omega^\aff\right\},
\end{align*}
and then use M\"obius inversion to obtain an asymptotic for $M(\Omega,\a,\d,B)$. This will allow us to give an asymptotic for $M(\Omega,\c,B)$, from which we will obtain Theorem \ref{thm:WProjFin} by summing over the ideal class representatives $\c\in \{\c_1, \c_2,\dots,\c_h\}$.

Define the set
\[
\mcM(\a,\d)\defeq \{x\in K^{n+1}-\{0\} : \mfI_{\w'}(x)\subseteq \a,\ \delta_f(x)=\d\}.
\]
Then
\[
M(\Omega,\a, \d, B)=\#\{x\in \mcM(\a,\d)/\O_K^\times : \Ht_\w(\varphi_f(x))\leq B,\ \mfI_{\w'}(x)=\a,\ x\in \Omega^\aff\}
\]
and
\[
M'(\Omega,\a, \d, B)=\#\{x\in \mcM(\a,\d)/\O_K^\times : |\varphi_f(x)|_{\w,\infty}\leq B N_{K/\Q}(\a^{e(f)}\d),\ x\in \Omega^\aff\}.
\]
The following lemma shows that $\mcM(\a,\d)$ is the disjoint union of finitely many translates of a lattice.

\begin{lemma}
Fix an ideal $\a\subseteq \O_K$. Then there exists a lattice $\Lambda(\a)\subset K^{n+1}_\infty$ such that for any ideal $\d\in \mfD_f$ there exists a finite set $V(\a,\d)\subset \O_K^{n+1}$ for which 
\[
\mcM(\a,\d)\cup\{0\}=\bigsqcup_{v\in V(\a,\d)} (v+\Lambda(\a) ). 
\]
\end{lemma}

\begin{proof}

For each prime ideal $\p\in \widetilde{\mfD}_f$ define
\[
\q\defeq \p^{\max\{v(\d) : \d\in \mfD_f\}}.
\]
For any prime ideal $\p\subset \O_K$ define a lattice in $K_\p^{n+1}$ by
\[
\Lambda_\p(\a)\defeq \begin{cases}
\left(\q \a_\p^{e(f)}\right)^{\w} & \text{ if } \p\in \widetilde{\mfD}_f,\\
\a_\p^{\w'} & \text{ if } \p\not\in \widetilde{\mfD}_f.
\end{cases}
\]
Define sets
\[
\mcM_\p(\a,\d)\defeq \{a\in K_\p^{n+1} : \mfI_{\w'}(a)\subseteq \a_\p,\ \delta_f(a)=\d_\p\},
\]
and note that
\[
	\mcM(\a,\d) = \bigcap_\p \mcM_\p(\a,\d).
\]

If $\p\not\in \widetilde{\mfD}_f$, then $\d_\p=(1)$ and so $\mcM_\p(\a,\d)=\a_\p^{\w'}=\Lambda_\p(\a)$.

If $\p\in \widetilde{\mfD}_f$, then the conditions $\mfI_{\w'}(a)\subseteq \a_\p$ and $\delta_f(a)=\d_\p$ are together equivalent to satisfying the congruence conditions $a_i\equiv 0\mod{\a_\p^{w'_i}}$ and $f_i(a)\equiv 0 \mod{\left(\a_\p^{e(f)}\d_\p\right)^{w_i}}$ for all $i$. 

Possibly re-ordering the elements of $\w$ and $\w'$ we may apply Proposition \ref{prop:w'<e(f)w}, so that $w'_i\leq e(f) w_i$ for all $i$. This, together with the fact that $\d_\p|\q$, shows that the set $\mcM_\p(\a,\d)$ can be determined by finitely many congruence conditions modulo $\Lambda_\p(\a)$. In particular, there exists a finite set $V_{\p}(\a,\d)\subset K_\p^{n+1}$ for which
\[
\mcM_\p(\a,\d)\cup\{0\}=\bigsqcup_{v\in V_\p(\a,\d)} \left(v+\Lambda_\p(\a)\right).
\]
Letting
\[
\Lambda(\a)\defeq \bigcap_\p \left(\Lambda_\p(\a)\cap K^{n+1}\right)=\a^{\w'} \cap \bigcap_{\p\in \widetilde{\mfD}_f} \Lambda_\p(\a)
\]
and
\[
V(\a,\d)\defeq \bigcap_{\p\in \widetilde{\mfD}_f} \left(V_\p(\a,\d)\cap K^{n+1}\right),
\]
we obtain the desired result.
\end{proof}

Note that the set of lattices $(\Lambda(\a))_{\a\subset \O_K}$ is a $\mfD_f$-admissible system of lattices. 

 We are now going to construct a fundamental domain for the $\w'$-weighted action of the unit group $\O_K^{\times}$ on $K_\infty^{n+1}-\{0\}$. This will be done using Dirichlet's Unit Theorem, which we now recall.

\begin{theorem}[Dirichlet's Unit Theorem]
 Let $K$ be a number field with $r_1$ real embeddings and $r_2$ pairs of complex embeddings of $K$ into $\C$. Then the image $\Lambda$ of the map
\begin{align*}
\lambda: \O_K^\times &\to \R^{r_1+r_2}\\
u &\mapsto (\log |u|_v)_{v\in \Val_\infty(K)}
\end{align*}
is a rank $r\defeq r_1+r_2-1$ lattice in the hyperplane $H$ defined by $\sum_{v\in \Val_\infty(K)} x_v=0$, and the kernel of $\lambda$   is the group of roots of unity in $K$, $\varpi(K)$.
\end{theorem}

For each $v\in \Val_\infty(K)$ define a map
\begin{align*}
\eta_v: K_v^{n+1}-\{0\} &\to \R \\
x_v &\mapsto \log\max_i |f_i(x_v)|_v^{1/w_i}.
\end{align*}
Combine these maps to obtain a single map
\begin{align*}
\eta: \prod_{v\in \Val_\infty(K)} (K_v^{n+1}-\{0\}) &\to \R^{r_1+r_2}\\
x & \mapsto (\eta_v(x_v)).
\end{align*}
Let $H$ be the hyperplane in Dirichlet's unit theorem, and let
\begin{align*}
\pr: \R^{r_1+r_2} \to H
\end{align*}
be the projection along the vector $(d_v)_{v\in \Val_\infty(K)}$, where $d_v=1$ if $v$ is real and $d_v=2$ if $v$ is complex. More explicitly,
\[
(\pr(x))_v = x_v-\left(\frac{1}{d}\sum_{v'\in\Val_\infty(K)} x_{v'}\right) d_v.
\]

Let $\{u_1,\dots,u_r\}$ be a basis for the image of the units of $K$, $\O_K^\times$, in the hyperplane $H$, and let $\{\check{u}_1,\dots,\check{u_r}\}$ be the corresponding dual basis. Then the set
\[
\tilde{\mcF}\defeq \{y\in H: 0\leq \check{u}_j(y)<1 \text{ for all }j\in\{1,\dots,r\}\}
\]
is a fundamental domain for the hyperplane $H$ modulo the lattice $\Lambda$, and $\mcF\defeq (\pr\circ\eta)^{-1}\tilde{\mcF}$ is a fundamental domain for the $\w'$-weighted action of the unit group $\O_K^\times$ on the space $\prod_{v\in \Val_\infty(K)}(K_v^{n+1}-\{0\})$. 

Define sets
\[
\mcD(B)\defeq \left\{x\in \prod_{v\in \Val_\infty(K)} (K_v^{n+1}-\{0\}) : |\varphi_f(x)|_{\w,\infty}=\prod_{v\in \Val_\infty(K)} \max_i|f_i(x_{v})|_v^{1/w_i}\leq B\right\}
\]
and $\mcF(B)\defeq \mcF\cap \mcD(B)$. The sets $\mcD(B)$ are $\O_K^\times$-stable, in the sense that if $u\in \O_K^{\times}$ and $x\in \mcD(B)$, then $u\ast_{\w'} x\in \mcD(B)$; this can be seen by the following computation: 
\[
\prod_{v|\infty} \max_i |f_i(u^{w'_i}x_{v})|_v^{1/w_i}=\prod_{v|\infty} |u|_v^{e(f)}\prod_{v|\infty}\max_i |f_i(x_{v})|_v^{1/w_i}=\prod_{v|\infty}\max_i |f_i(x_{v})|_v^{1/w_i}.
\]
 Similarly, for any $t\in \R$, we have that
\[
\prod_{v|\infty} \max_i |f_i(t^{w'_i}x_{v})|_v^{1/w_i}=\prod_{v|\infty} |t|_v^{e(f)}\prod_{v|\infty}\max_i |f_i(x_{v})|_v^{1/w_i}=|t|^{d\cdot e(f)}\prod_{v|\infty}\max_i |f_i(x_{v})|_v^{1/w_i}.
\]
This shows that $|\varphi_f(t\ast_{\w'} x)|_{\w,\infty}=|t|^{e(f)d}|\varphi_f(x)|_{\w,\infty}$. Therefore $\mcD(B)=B^{\frac{1}{e(f) d}}\ast_{\w'}\mcD(1)$ for all $B>0$. On the other hand, the fundamental domain $\mcF$ is stable under the weighted action of $t\in \R^\times$, in the sense that $t\ast_{\w'} \mcF=\mcF$. To see this, note that for any $x\in \prod_{v|\infty} (K_v^{n+1}-\{0\})$, we have that
\[
\eta(t\ast_{\w'} x)=(e(f) d_v)_{v|\infty}\log(|t|)+\eta(x).
\]
Since the projection map $\pr(x)$ is linear and annihilates the vector $(d_v)_{v|\infty}$, we have that
\[
\pr\circ\eta(t\ast_{\w'}x)=\pr\circ\eta(x),
\]
as desired. From our observations we obtain the following lemma:

\begin{lemma}\label{lem:F(B)WHom}
The regions $\mcF(B)$ are weighted homogeneous, in the sense that $\mcF(B)=B^{1/e(f)d}\ast_{\w'} \mcF(1)$ for all $B>0$.
\end{lemma}

We are now going to count lattice points in $\mcF(B)$. In \cite{Sch79} this is done by using the classical Principle of Lipschitz \cite{Dav51} (see also \cite[VI \S 2 Theorem 2]{Lan94}). One of the cruxes of Schanuel's argument is verifying that a fundamental domain (analogous to our $\mcF(1)$) has  Lipschitz parameterizable boundary, so that he can apply the Principle of Lipschitz. Though one can modify this part of Schanuel's argument to work in our case, we will instead take a slightly different route, using an o-minimal version of the Principle of Lipschitz (Proposition \ref{prop:o-Lip}). This allows one to give a more streamlined proof of this part of Schanuel's argument. Techniques from o-minimal geometry have previously been used to study Manin's Conjecture for varieties over number fields (see, e.g., the work of Frei and Pieropan \cite{FP16}).

\begin{lemma}\label{lem:F(1)Bounded}
The set $\mcF(1)$ is bounded.
\end{lemma}

\begin{proof}
Let $\tilde{H}\subset \prod_{v|\infty} K_v$ be the subset defined by $\sum_{v|\infty} x_v\leq 0$. Note that the domain $\mcF(1)$ is equal to $\eta^{-1}(\tilde{H} \cap  \pr^{-1}(\tilde{\mcF}))$. It follows from the definition of $\eta$ that, in order to show $\mcF(1)$ is bounded, it suffices to show that the set 
\[
S\defeq\tilde{H} \cap  \pr^{-1}(\tilde{\mcF})\subset \R^{r_1+r_2}
\]
 is bounded above (i.e., there exist constants $c_1,\dots,c_{r_1+r_2}$ such that for each 
 \[
 x=(x_1,\dots,x_{r_1+r_2})\in S
 \]
  we have $x_i\leq c_i$ for all $i$). For this, note that any $x\in S$ can be written as 
\[
x=\pr(x) + (d_v)_v \frac{1}{d} \sum_{v'|\infty} x_{v'},
\]
and thus the components of $S$ are bounded above, noting that the first term, $\pr(x)$, has components bounded above, and the second term has negative components. 
\end{proof}

\begin{lemma}\label{lem:F(1)Definable}
The set $\mcF(1)$ is definable in $\R_{\exp{}}$.
\end{lemma}

\begin{proof}
We make the following straightforward observations:
\begin{itemize}
\item The set $\mcD(1)$ is semi-algebraic.
\item The subset
\small
\begin{align*}
&\mcF=\{x\in \prod_{v|\infty} (K_v^{n+1}-\{0\}): \pr\circ\eta(x)\in \tilde{\mcF}\}\subset (\R^{r_1+r_2})^{n+1}
\end{align*}
consists of all $x\in \prod_{v|\infty} (K_v^{n+1}-\{0\})$ such that
\begin{align*}
 0\leq \check{u}_j \tiny{\left(\log(\max_i |f_i(x)|_v^{1/w_i})-d_v\left(\frac{1}{d} \sum_{v|\infty}\log(\max_i |f_i(x)|_{v'}^{1/w_i}))\right)\right)_v} <1
\end{align*}
for all $j$.
This set is definable in $\R_{\exp}$, since it can be described in terms of polynomials and $\log$, and $\log$ is definable in $\R_{\exp}$.
\end{itemize}

It follows that the intersection $\mcF(1)=\mcF\cap\mcD(1)$ is definable in $\R_{\exp}$.
\end{proof}

 Recall that $\varpi(K)$ is the group of roots of unity in $K$ and  that $\varpi_K\defeq \#\varpi(K)$ is the number of roots of unity in $K$. Note that each $\varpi(K)$-orbit (with respect to the $\w'$-weighted action) of an element of $(K-\{0\})^{n+1}$ contains 
 \[
 \varpi_{K,\w'}\defeq \frac{\varpi_K}{\gcd(w'_0,\dots,w'_n, \varpi_K)}
 \]
 elements. Thus we may rewrite our counting function as
\begin{align}\label{eq:orbits}
M'(\Omega,\a,\d,B)
=
\frac{\# \left( \Omega^{\aff} \cap \mcF\left(B\cdot N_{K/\Q}(\a^{e(f)}\d)\right)\cap \mcM(\a,\d)\right)}{\varpi_{K,\w'}}
 + O\left(B^{\frac{d|\w'|-w'_{\min}}{e(f)d}}\right),
\end{align}
where the error term accounts for the points counted by $M'(\Omega,\a, \d, B)$ which are contained in the subvariety of the affine cone of $\mcP(\w')(K)$ consisting of points with at least one coordinate equal to zero; by Proposition \ref{prop:KCRTfin} this error is at most $O\left(\left(B^{1/e(f)d}\right)^{d|\w'|-w'_{\min}}\right)$.

For $\a \subseteq \c$ ideals of $\O_K$, write $\a/\c=\n_0\n_1$, where $\n_0$ is the largest factor of $\a/\c$ which is relatively prime to all elements of the set of defects $\mfD_f$.
By Lemma \ref{lem:F(1)Bounded} and Lemma \ref{lem:F(1)Definable} we may apply Proposition \ref{prop:KCRTfin} to the $\#V(\a,\d)$ translates of the lattice $\Lambda(\a)$ and with the local conditions, $\Omega^{\aff}_v\cap\mcF(1)$ if $v|\infty$, and $\Omega^{\aff}_v$ if $v\in S$, to obtain the following asymptotic for $M'(\Omega,\a, \d, B)$ from (\ref{eq:orbits}):
\begin{salign}\label{eq:I<=c asymptotic'}
&\frac{\#V(\a,\d)\cdot m_\infty(\Omega^\aff_\infty\cap \mcF(1))\cdot N_{K/\Q}(\a^{e(f)}\d)^{\frac{|\w'|}{e(f)}}}{\varpi_{K,\w'} N_{K/\Q}(\n_0)^{|\w'|}[\O_K^{n+1} : \Lambda(\c\n_1)]\cdot |\Delta_K|^{(n+1)/2}} \left(\prod_{\p\in S} m_\p(\Omega^{\aff}_\p \cap \O_{K,\p}^{n+1})\right) B ^{\frac{|\w'|}{e(f)}} \\
&\qquad + O\left(B^{\frac{d|\w'|-w'_{\min}}{e(f) d}}\right).
\end{salign}

By the definition of $\kappa'$,  we have
\begin{align}\label{eq:I<=c asymptotic}
M'(\Omega,\a,\d,B)=\kappa' \frac{\#V(\a,\d)\cdot N_{K/\Q}(\c\n_1)^{|\w'|}N_{K/\Q}(\d)^{\frac{|\w'|}{e(f)}}}{[\O_K^{n+1}:\Lambda(\c\n_1)]} B^{\frac{|\w'|}{e(f)}}
+ O\left(B^{\frac{d|\w'|-w'_{\min}}{e(f) d}}\right).
\end{align}
Note that for 
\[
x\in \c^{\w'}=\c^{w'_0}\times \c^{w'_1}\times\cdots\times \c^{w'_n}, 
\]
the scaling ideal $\mfI_{\w'}(x)$ equals $\c\n$ for some ideal $\n\subseteq \O_K$, and thus
\[
\Ht_\w(\varphi_f(x))=\frac{|\varphi_f(x)|_{\w,\infty}}{N_{K/\Q}(\n^{e(f)} \c^{e(f)}\d)}.
\]
 Therefore 
\[
M'(\Omega,\c,\d,B)=\sum_{\n\subseteq \O_K} M(\Omega,\c\n,\d, B/N_{K/\Q}(\n)^{e(f)}).
\]
We now apply M\"obius inversion and use our asymptotic (\ref{eq:I<=c asymptotic}) for $M'(\Omega,\a, \d, B)$ to obtain an asymptotic for the counting function $M(\Omega, \c, \d, B)$:
\begin{align*}
M(\Omega,\c, \d, B) 
&=\sum_{\n\subseteq \O_K} \mu(\n)\left(\kappa'\frac{\#V(\c\n,\d)\cdot N_{K/\Q}(\c\n_1)^{|\w'|}N_{K/\Q}(\d)^{\frac{|\w'|}{e(f)}}}{[\O_K^{n+1}:\Lambda(\c\n_1)]} \left(\frac{B }{N_{K/\Q}(\n)^{e(f)}}\right)^{|\w'|/e(f)}\right.\\
& \left.\hspace{40mm}+ O\left(\left(\frac{B}{N_{K/\Q}(\n)^{e(f)}}\right)^{\frac{d|\w'|-w'_{\min}}{e(f)d}}\right)\right)\\
&=\kappa' B^{|\w'|/e(f)}\sum_{\n\subseteq \O_K}  \mu(\n)\frac{\#V(\c\n,\d)\cdot N_{K/\Q}(\c^{e(f)}\d)^{\frac{|\w'|}{e(f)}}}{N_{K/\Q}(\n_0)^{|\w'|}[\O_K^{n+1}:\Lambda(\c\n_1)]}\\ 
& \hspace{20mm} +O\left(B^{\frac{d|\w'|-w'_{\min}}{e(f)d}} \sum_{\n\subseteq \O_K}\frac{1}{N_{K/\Q}(\n)^{|\w'|-w'_{\min}/d}}\right)\\
&=\kappa' B^{|\w'|/e(f)}\sum_{\n\subseteq \O_K}  \mu(\n)\frac{\#V(\c\n,\d)\cdot N_{K/\Q}(\c^{e(f)}\d)^{\frac{|\w'|}{e(f)}}}{N_{K/\Q}(\n_0)^{|\w'|}[\O_K^{n+1}:\Lambda(\c\n_1)]}\\ 
& \hspace{21mm} 
+ \begin{cases}
O\left(B^{1/e(f)}\log(B)\right) & {\substack{\text{ if } \w'=(1,1)\\ \text{and}\ K=\Q,}}\\
O\left(B^{\frac{d|\w'|-w'_{\min}}{e(f)d}}\right) & \text{ else. }
\end{cases}
\end{align*}

Noting that $\#V(\c\n,\d)=\#V(\c\n_1,\d)$ and letting $\widehat{\mfD}_f$ denote the set of ideals of $\O_K$ divisible only by primes in the set of prime ideals $\widetilde{\mfD}_f$, we have that
\begin{align*}
&\sum_{\n\subseteq \O_K}  \mu(\n)\frac{\#V(\c\n,\d)\cdot N_{K/\Q}(\c^{e(f)}\d)^{\frac{|\w'|}{e(f)}}}{N_{K/\Q}(\n_0)^{|\w'|}[\O_K^{n+1}:\Lambda(\c\n_1)]}\\
&\hspace{1cm} = \left(\sum_{\n_1\in \widehat{\mfD}_f } \mu(\n_1)\frac{\#V(\c\n_1,\d)\cdot N_{K/\Q}(\c^{e(f)}\d)^{\frac{|\w'|}{e(f)}}}{[\O_K^{n+1}:\Lambda(\c\n_1)]}\right)
\left(\sum_{\n_0\not\in \widehat{\mfD}_f}  \mu(\n_0)\frac{1}{N_{K/\Q}(\n_0)^{|\w'|}}\right)\\
&\hspace{1cm} = \left(\sum_{\n_1\in \widehat{\mfD}_f } \mu(\n_1)\frac{\#V(\c\n_1,\d)\cdot N_{K/\Q}(\c^{e(f)}\d)^{\frac{|\w'|}{e(f)}}}{[\O_K^{n+1}:\Lambda(\c\n_1)]}\right)
\frac{1}{\zeta_K(|\w'|)}\prod_{\p\in \widetilde{\mfD}_f} \frac{1}{1-N_{K/\Q}(\p)^{-|\w'|}}.
\end{align*}
Note that the remaining sum is finite since $\mu(\n_1)\neq 0$ only if $\n_1$ is squarefree, and there are only finitely many squarefree elements in $\widehat{\mfD}_f$, since we have assumed that the set of defects $\mfD_f$ is finite. Define the constant
\[
\kappa(\c,\d)\defeq  \frac{\kappa'}{\zeta_K(|\w'|)} \left(\sum_{\n_1\in \widehat{\mfD}_f } \mu(\n_1)\frac{\#V(\c\n_1,\d)\cdot N_{K/\Q}(\c^{e(f)}\d)^{\frac{|\w'|}{e(f)}}}{[\O_K^{n+1}:\Lambda(\c\n_1)]}\right)
\prod_{\p\in \widetilde{\mfD}_f} \frac{1}{1-N_{K/\Q}(\p)^{-|\w'|}}.
\]
Summing over the defects $\d\in \mfD_f$, we have that
\begin{align*}
M(\Omega, \c, B)&=\sum_{\d\in \mfD_f}M(\Omega, \c, \d, B)\\
&= \sum_{\d\in \mfD_f} \kappa(\c,\d) B^{|\w'|/e(f)}
 + \begin{cases}
O\left(B^{1/e(f)}\log(B)\right) & \text{ if } \w'=(1,1) \text{ and } K=\Q,\\
O\left(B^{\frac{d|\w'|-w'_{\min}}{e(f)d}}\right) & \text{ else. }
\end{cases}
\end{align*}
 Finally, summing over the ideal class representatives $\c_i$ of the ideal class group $\Cl_K$, we obtain Theorem \ref{thm:WProjFin}.
\end{proof}

\subsection{Infinitely many local conditions}\label{subsec:WPROJinfinite}

We now address the situation in which infinitely many local conditions are imposed. We will call a full set of local conditions $(\Omega_v)_{v\in \Val(K)}$ \textbf{admissible} if it satisfies each of the following properties:
\begin{enumerate}[label=(\roman*)]
\item For all $v\in \Val(K)$ the set $\Omega_v^{\aff}\subseteq K_v^{n+1}$ is measurable and has boundary $\partial \Omega_v^{\aff}$ of measure zero.
\item For all bounded measurable subsets $\Psi\subset K_\infty^{n+1}$ such that $m_\infty(\Psi)>0$ and $m_\infty(\delta \Psi)=0$, we have that
\begin{align}\label{eq:WPEk}
\lim_{M\to\infty}\limsup_{B\to\infty}\frac{\#\{x\in \O_K^{n+1} \cap (B\ast_{\w'} \Psi) : x\not\in \Omega_v^\aff\ \text{\small for some finite $v$ with } N_{K/\Q}(\p_v)>M\}}{B^{d|\w'|}}=0.
\end{align}
\end{enumerate}

\begin{theorem}\label{thm:WProjInfty}
 Let $(\Omega_v)_{v\in \Val(K)}$ be a full admissible set of local conditions. Then there exists an explicit constant $\kappa$ such that
\begin{align*}
\#\{x\in \varphi_f\left(\mcP(\w')(K)\right):\Ht_{\w}(x)\leq B,\ x\in \varphi_f(\Omega)\} \sim \kappa B^{|{\w}'|/e(f)},
\end{align*}
as a function of $B$. 
\end{theorem}

Maintaining notation from Subsection \ref{subsec:WPROJfinite} and setting
\[
\kappa'=\frac{m_\infty(\mcF(1))}{\varpi_{K,\w'} |\Delta_K|^{(n+1)/2}} 
m_\infty(\Omega_\infty^{\aff}\cap \mcF(1)) 
\prod_{v\in \Val_0(K)} m_{\p_v}(\Omega^{\aff}_{\p_v} \cap \O_{K,\p_v}^{n+1}),
\]
an expression for the leading coefficient in Theorem \ref{thm:WProjInfty} is
\[
\kappa=\frac{\kappa'}{\zeta_K(|\w'|)}\sum_{i=1}^{h_K}\sum_{\d\in \mfD_f} \sum_{\n_1\in \widehat{\mfD}_f} \frac{\# V(\c_i\n_1,\d) N_{K/\Q}(\c_i)^{|\w'|} N_{K/\Q}(\d)^{\frac{|\w'|}{e(f)}}}{[\O_K^{n+1}:\Lambda(\c_i\n_1)]} \prod_{\p\in \widetilde{\mfD}_f} \frac{1}{1-N_{K/\Q}(\p)^{|\w'|}}.
\]

\begin{proof}
The proof is the same as the proof of Theorem \ref{thm:WProjFin}, but with Proposition \ref{prop:KCRTfin} replaced by Proposition \ref{prop:KEk}. In order to apply Proposition \ref{prop:KEk} to give an estimate for $M'(\Omega, \a, \d, B)$ we must check that the local conditions
\begin{align*}
\Theta_\infty &\defeq  \{x\in \Omega_\infty^{\aff} \cap \mcF : |\varphi_f(x)|_{\w,\infty}\leq N_{K/\Q}(\a^{e(f)}\d)\},\\
\Theta_\p &\defeq  \{x\in \Omega_\p^{\aff}\cap \O_{K,\p}^{n+1} : \mfI_{\w'}(x)_\p=\a_\p,\ \delta_f(x)_\p=\d_\p\} 
\end{align*}
satisfy the hypotheses of Proposition \ref{prop:KEk}. 

The sets $\Theta_\p$ are measurable since they are each the finite intersection of measurable sets.
Since $\mcF(1)$ is a bounded definable set by Lemma \ref{lem:F(1)Bounded} and Lemma \ref{lem:F(1)Definable}, it is measurable by Proposition \ref{prop:DefinableMeasurable}. It follows that $\Theta_\infty$ is measurable since it is the intersection of two measurable sets. 

We will now show that the $\Theta_\p$ satisfy condition (\ref{eq:KEk}) of Proposition \ref{prop:KEk}. By the assumption (\ref{eq:WPEk}) it suffices to show (\ref{eq:KEk}) for the sets 
\[
\Theta'_\p\defeq \{x\in \O_{K,\p}^{n+1} : \mfI_{\w'}(x)_\p=\a_\p,\ \delta_f(x)_\p=\d_\p\}.
\] 
Note that for $\p$ sufficiently large 
\[
\Theta'_\p=\{x\in \O_{K,\p}^{n+1} : \mfI_{\w'}(x)_\p=\O_{K,\p},\ \delta_f(x)_\p=\O_{K,\p}\}.
\]
 Applying Lemma \ref{lem:Ek-Bha} with the subscheme corresponding to the ideal $(x_0,\dots,x_n)\cap (f_0(x),\dots,f_n(x))$ shows that the sets
\[
\Theta''_\p\defeq \{x\in \O_{K,\p}^{n+1} : \mfI_{\w'}(x)_\p=\O_{K,\p},\ \mfI_\w(\varphi_f(x))_\p=\O_{K,\p}\}
\]
satisfy condition (\ref{eq:KEk}). But $\Theta''_\p\subseteq \Theta'_\p$ for all sufficiently large $\p$, and thus the $\Theta'_\p$ must also satisfy (\ref{eq:KEk}). 
\end{proof}

In the case that $\w=\w'$ and $f$ is the identity, one may compute the volume of $\mcF(1)$ as in \cite[Proposition 5.3]{Den98} to obtain
\[
m_\infty(\mcF(1))=\left(2^{r_1+r_2}\pi^{r_2}\right)^{n+1}R_K|\w|^{r_1+r_2-1}.
\]
This differs by a factor of $2^{(n+1)r_2}$ from Deng's result since we are using the usual Haar measure on $K_\infty$ rather than the Lebesgue measure. We also have that the set of defects $\mfD_{f}$ is just $\{\O_K\}$ when $f$ is the identity. From these observations, Theorem \ref{thm:WProjFin} and Theorem \ref{thm:WProjInfty} give the following corollary:

\begin{corollary}\label{cor:WPcount-f=id}
Let $(\Omega_v)_{v\in \Val(K)}$ be a full admissible set of local conditions. Define a product of local measures
\[
\kappa_\Omega=m_\infty (\Omega_\infty^{\aff}\cap\ \mcF(1))\prod_\p m_\p(\{x\in \Omega_\p^\aff\cap \O_{K,\p}^{n+1}\}).
\]
Then 
\begin{align*}
&\#\{x\in \mcP(\w)(K):\Ht_\w(x)\leq B,\ x\in \Omega_v \text{ for all } v\in \Val(K)\}\\
&\hspace{7mm}\sim \kappa_\Omega \frac{h_K \left(2^{r_1+r_2}\pi^{r_2}\right)^{n+1}R_K|\w|^{r_1+r_2-1}}{\varpi_{K,\w}|\Delta_K|^{(n+1)/2}\zeta_K(|\w|)}B^{|\w|}.
\end{align*}
Moreover, if $\Omega_\infty$ is definable in $K_\infty^{n+1}$ and $\Omega_v=\mcP(\w)(K_v)$ for all but finitely many $v\in\Val_0(K)$, then
\begin{align*}
&\#\{x\in \mcP(\w)(K):\Ht_\w(x)\leq B,\ x\in \Omega_v \text{ for all } v\in \Val(K)\}\\
&\hspace{15mm} =\kappa_\Omega \frac{h_K \left(2^{r_1+r_2}\pi^{r_2}\right)^{n+1}R_K|\w|^{r_1+r_2-1}}{\varpi_{K,\w}|\Delta_K|^{(n+1)/2}\zeta_K(|\w|)}B^{|\w|}+ \begin{cases}
O\left(B^{|\w|-w_{\min}/d}\log(B)\right) &  {\substack{\text{ if } \w=(1,1)\\ \text{and}\ K=\Q,}}\\
O\left(B^{|\w|-w_{\min}/d}\right) & \text{ else. }
\end{cases}
\end{align*}
\end{corollary}

The next result, which follows immediately from Lemma \ref{lem:Ek-Bha}, gives a criterion for checking the limit condition in Theorem \ref{thm:WProjInfty}:

\begin{lemma}\label{lem:WProj-Ek-Bha}
Let $K$ be a number field and $Y\subset \mcP_{\O_K}(\w')$ a closed subscheme of codimension $k>1$. For each prime ideal $\p\subset \O_K$, let
\[
\Omega_\p=\{x\in \mcP(\w')(\O_{K,\p}) : x\Mod{\p}\not\in Y(\F_\p)\}.
\]
Then, for all bounded subsets $\Psi\subset K_\infty^{n+1}$ with positive measure and boundary of measure zero, 
we have that
\begin{align*}
&\#\{x\in \O_K^{n+1} \cap B\ast_{\w'} \Psi : x\Mod{\p} \not\in \Omega^\aff_\p \text{ for some prime } \p \text{ with } N_{K/\Q}(\p)>M\}\\
&\hspace{1cm}=O\left(\frac{B^{|\w'|d}}{M^{k-1}\log(M)}\right),
\end{align*}
where the implied constant depends only on $\Psi$, $\w$, and $Y$. In particular, the limit condition (\ref{eq:WPEk}) holds in this situation.
\end{lemma}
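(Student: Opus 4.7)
The plan is to reduce the weighted projective statement to the affine statement of Lemma \ref{lem:Ek-Bha} by passing to the affine cone of $Y$. Concretely, let $\tilde{Y}\subset \A^{n+1}_{\O_K}$ denote the closed subscheme corresponding to the pullback of $Y$ under the canonical morphism $\A^{n+1}_{\O_K}\setminus\{0\}\to \P(\w')_{\O_K}$ (i.e.\ the affine cone over $Y$, together with the origin). This $\tilde Y$ is $\G_m$-invariant under the weighted action $\ast_{\w'}$, and since $Y$ has dimension $n-k$ in $\P(\w')$, adding one dimension along the $\G_m$-fibre shows that $\tilde{Y}$ has codimension $k$ in $\A^{n+1}_{\O_K}$; in particular $k>1$, so $\tilde{Y}$ satisfies the hypothesis of Lemma \ref{lem:Ek-Bha}.

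Next I would translate the ``bad'' local condition. For a prime $\p\subset \O_K$, set
\[
\widetilde{\Omega}_\p:=\{x\in \O_{K,\p}^{n+1}: x\bmod \p \notin \tilde{Y}(\F_\p)\}.
\]
For any $x\in \O_K^{n+1}$ whose reduction mod $\p$ is nonzero, the image $[x]\in \P(\w')(\O_{K,\p})$ lies in $\Omega_\p$ exactly when $x\in \widetilde{\Omega}_\p$, and hence $x\notin \Omega_\p^\aff$ forces $x\in \O_{K,\p}^{n+1}\setminus \widetilde{\Omega}_\p$. The only extra vectors to worry about are those $x$ with $x\equiv 0\pmod{\p}$, but these automatically lie in $\tilde Y(\F_\p)$ (since $0\in \tilde Y$), so they are already counted on the affine side. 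Consequently
\[
\{x\in \O_K^{n+1}\cap B\ast_{\w'}\Psi : x\notin \Omega_\p^\aff \text{ for some } N(\p)>M\}\subseteq \{x\in \O_K^{n+1}\cap B\ast_{\w'}\Psi : x\notin \widetilde{\Omega}_\p \text{ for some } N(\p)>M\}.
\]

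Finally, apply Lemma \ref{lem:Ek-Bha} to the closed subscheme $\tilde Y\subset \A^{n+1}_{\O_K}$, the weights $\w'$, and the bounded set $\Psi$ (which has $m_\infty(\partial\Psi)=0$ and positive measure by hypothesis). This directly yields the upper bound $O(B^{|\w'|d}/(M^{k-1}\log M))$ on the right-hand side of the displayed inequality above, completing the proof; the condition (\ref{eq:WPEk}) follows by letting $M\to\infty$ after dividing by $B^{d|\w'|}$.

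The essentially only nontrivial step is the codimension bookkeeping for the cone construction: one must be sure that replacing $Y\subset \P(\w')$ by its affine cone $\tilde Y\subset \A^{n+1}$ preserves the codimension, even in the stacky weighted projective setting. This is clear scheme-theoretically because the affine cone minus the origin is a $\G_m$-torsor over the coarse moduli space of $Y$, so $\dim \tilde Y=\dim Y+1$, while $\dim \A^{n+1}=\dim \P(\w')+1$; everything else is a direct appeal to the already-established affine geometric sieve.
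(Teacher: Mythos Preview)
Your proposal is correct and follows exactly the approach implicit in the paper, which simply states that the lemma ``follows immediately from Lemma~\ref{lem:Ek-Bha}'' without spelling out any details. You have supplied precisely the missing bookkeeping---passing to the affine cone $\tilde Y$, checking its codimension is preserved, and verifying the inclusion of bad sets---that makes this immediate deduction rigorous.
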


Let $k$ be a positive integer. An element $\alpha$ in $\O_K$ is said to be \textit{$k$-free} if there are no prime ideals $\p\subset \O_K$ such that $\alpha\in \p^k$. Let $[r]\in K^\times/(K^\times)^k$, let $a\in \O_K$, and let $\n\subset \O_K$ be an integral ideal. If there exists a $k$-free $t\in K^\times$ which is equivalent to $r$ in $K^\times/(K^\times)^k$ and which satisfies $t\equiv a\Mod{\n}$, then we will write $[r]\equiv a\Mod{\n}$. Using Corollary \ref{cor:WPcount-f=id} we can count elements of $\mcP(w_0)(K)$ of bounded height which satisfy a congruence condition.

\begin{corollary}\label{cor:k-free}
Let $K$ be a degree $d$ number field over $\Q$, let $w_0$ be a positive integer, let $a\in \O_K$ be an algebraic integer in $K$, and let $\n\subset \O_K$ be an integral ideal with prime factorization $\n=\p_1^{s_1}\cdots\p_n^{s_n}$. Define the ideal $\g\defeq \gcd((a),\n)$. Then the number of elements $x\in \mcP(w_0)(K)=K^\times/(K^\times)^{w_0}$ satisfying $x\equiv a\Mod{\n}$ and of height less than $B$ is
\[
\left(\prod_{\substack{\p_i|\n \\ \p_i| \g}}N_{K/\Q}(\p_i)^{-s_i}\prod_{\substack{\p_i|\n \\ \p_i\nmid \g}} \frac{N(\p_i)^{w_0-s_i}}{N_{K/\Q}(\p_i)^{w_0}-1}\right) \frac{h_K 2^{r_1+r_2}\pi^{r_2}R_K w_0^{r_1+r_2-1}}{\varpi_{K,w_0}|\Delta_K|^{1/2}\zeta_K(w_0)}B^{w_0}+
O\left(B^{w_0(1-1/d)}\right).
\]
\end{corollary}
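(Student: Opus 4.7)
The plan is to specialize Corollary \ref{cor:WPcount-f=id} to the one-coordinate weighted projective stack $\P(w_0)$ (so $n=0$, $\w=(w_0)$, and $\P(w_0)(K)=K^\times/(K^\times)^{w_0}$), encoding the congruence condition $[r]\equiv a\pmod{\n}$ as a collection of local conditions $\Omega_v$. At each archimedean place and at each finite prime $\p\nmid\n$ I take $\Omega_v=\P(w_0)(K_v)$, which contributes trivially to $\kappa_\Omega$. At each $\p_i\mid\n$ I let $\Omega_{\p_i}\subseteq K_{\p_i}^\times/(K_{\p_i}^\times)^{w_0}$ consist of those classes $[r]$ admitting a representative $t\in K_{\p_i}^\times$ with $v_{\p_i}(t)<w_0$ and $t\equiv a\pmod{\p_i^{s_i}}$. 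A weak-approximation argument then shows that $[r]\equiv a\pmod{\n}$ in the global sense defined in the paper if and only if $[r]\in\Omega_{\p_i}$ for every $\p_i\mid\n$; since only finitely many places impose a nontrivial condition, the infinite-local-conditions hypothesis of Corollary \ref{cor:WPcount-f=id} is automatic.

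The substantive step is the computation of the local density $m_{\p_i}(\Omega_{\p_i}^{\aff}\cap\O_{K,\p_i})$ at each $\p_i\mid\n$. Fix a uniformizer $\pi\in\O_{K,\p_i}$ and observe that every class $[t]\in K_{\p_i}^\times/(K_{\p_i}^\times)^{w_0}$ has a canonical representative $t'=\pi^{e_\ast}u$ with $e_\ast\in\{0,\dots,w_0-1\}$ and $u\in\O_{K,\p_i}^\times$, whose full orbit of representatives in $K_{\p_i}^\times$ is $\pi^{e_\ast}u\cdot(\O_{K,\p_i}^\times)^{w_0}\cdot\langle\pi^{w_0}\rangle$. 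Decomposing $\Omega_{\p_i}^{\aff}\cap\O_{K,\p_i}$ as a disjoint union over $j\geq 0$ of sets of the form $\pi^{w_0 j}\cdot\bigl((\O_{K,\p_i}^\times)^{w_0}\cdot(a+\p_i^{s_i})\bigr)$ and summing the resulting geometric series produces the two claimed local factors: $\frac{N(\p_i)^{w_0-s_i}}{N(\p_i)^{w_0}-1}$ when $\p_i\nmid\g$ (so $v_{\p_i}(a)=0$ and the congruence forces every representative to be a $\p_i$-adic unit), and $N(\p_i)^{-s_i}$ when $\p_i\mid\g$ (where $v_{\p_i}(a)\geq 1$ and a further subcase analysis according to whether $v_{\p_i}(a)\geq s_i$ or $v_{\p_i}(a)<s_i$ is needed).

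Substituting these local densities into the formula of Corollary \ref{cor:WPcount-f=id}, and specializing the leading constant to $n=0$ and $\w=(w_0)$ (so that $(n+1)=1$, $|\w|=w_0$, $|\w|^{r_1+r_2-1}=w_0^{r_1+r_2-1}$, $(2^{r_1+r_2}\pi^{r_2})^{n+1}=2^{r_1+r_2}\pi^{r_2}$, and $|\Delta_K|^{(n+1)/2}=|\Delta_K|^{1/2}$), produces the claimed main term. The error term $O(B^{w_0-1/d})$ is inherited from the error in Corollary \ref{cor:WPcount-f=id} together with the weighted geometric sieve developed in Section \ref{sec:Geometric-Sieve}, which allows one to control the finitely many nontrivial local factors uniformly. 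The main technical obstacle is the case $\p_i\mid\g$: the interplay between the residue class $a+\p_i^{s_i}$ and the subgroup $(\O_{K,\p_i}^\times)^{w_0}$ of $\O_{K,\p_i}^\times$ requires careful bookkeeping, and the subcases $v_{\p_i}(a)\geq s_i$ and $v_{\p_i}(a)<s_i$ must be treated separately before being reconciled into the uniform factor $N(\p_i)^{-s_i}$.
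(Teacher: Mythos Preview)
Your plan is essentially the paper's: specialize Corollary~\ref{cor:WPcount-f=id} to $\P(w_0)$ with local conditions at the $\p_i\mid\n$ encoding the congruence, then compute the local densities $m_{\p_i}(\Omega_{\p_i}^{\aff}\cap\O_{K,\p_i})$. The difference lies in how the primes $\p_i\mid\g$ are handled. The paper first treats the coprime case $\g=(1)$ directly---obtaining the factor $\frac{N(\p_i)^{w_0-s_i}}{N(\p_i)^{w_0}-1}$ from a one-line count of residues in $\O_{K,\p_i}/\p_i^{w_0}$---and then disposes of the general case in a single sentence by replacing $\n$ with $\n/\g$, applying the coprime case, and multiplying by $N(\g)^{-1}$ to account for the residual congruence modulo $\g$. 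You instead propose to compute the local density uniformly at every prime via the geometric-series decomposition over $j\geq 0$, absorbing the $\p_i\mid\g$ case into the same framework (with the subcase analysis on $v_{\p_i}(a)$ versus $s_i$ that you correctly flag). Your route is more systematic and makes the weak-approximation step explicit, which the paper does not address; the paper's reduction is shorter but rests on an equidistribution heuristic that is only sketched.
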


\begin{remark}
In the case that $K$ has class number $1$, this result counts the number of $w_0$-free integers of $K$ satisfying a congruence condition.
\end{remark}

\begin{proof}
As $x\in K^\times/(K^\times)^{w_0}$ and $\n=\p_1^{s_1}\cdots\p_n^{s_n}$, the congruence $x\equiv a \Mod{\n}$ only depends on the $s_i$ modulo $w_0$. We may thus assume $s_i<w_0$ for all $i$.

First consider the case $\g=(1)=\O_K$. We apply Corollary \ref{cor:WPcount-f=id} with local conditions
\[
\Omega_{v}\defeq\begin{cases}
\mcP(w_0)(K_v) 
& \text{ if } v|\infty \text{ or } v(\n)=0, \\
  \{x\in\mcP(w_0)(K_{v}) : x \equiv a \Mod{\p_i^{s_i}}\} & \text{ if } \p_v=\p_i \text{ for some $i$}.
  \end{cases}
\]
As
\[
\Omega^{\aff}_{\p_i}=\{x\in K_{\p_i}^\times : x\not\equiv 0 \Mod{\p_i^{w_0}},\ x \equiv a \Mod{\p_i^{s_i}}\},
\]
we have that
\[
m_{\p_i}(\Omega_{\p_i}^{\aff}\cap \O_{K,\p_i})=\frac{N_{K/\Q}(\p_i)^{w_0-s_j}}{N_{K/\Q}(\p_i)^{w_0}-1},
\]
since $N_{K/\Q}(\p_i)^{w_0-s_j}$ of the $N_{K/\Q}(\p_i)^{w_0}-1$ non-zero elements of $\O_{K,\p_i}/\p_i^{w_0}$ are equivalent to $a$ modulo $\p_i^{s_i}$. From this the desired result follows in the case $\g=(1)$.

If $\g\neq (1)$, then the desired result is obtained by first replacing $\n$ by $\n/\g$, then carrying out the same argument as in the $\g=(1)$ case, and finally multiplying by a factor of $N_{K/\Q}(\g)^{-1}=\prod_{\substack{\p_i|\n \\ \p_i| \g}} N_{K/\Q}(\p_i)^{-s_i}$ to account for the $1$ out of $N_{K/\Q}(\g)$ elements $x$ of $\O_K^\times$ within a single congruence class modulo $\g$.
\end{proof}




\subsection{Counting with a twisted height}\label{subsec:WPROJtwisted}

In this section, motivated by counting twists of elliptic curves, we count \textit{twists} of points on weighted projective stacks.

Let $\w=(w_0,\dots,w_n)\in \Z_{>0}^{n+1}$ and let $\tau$ be a positive integer which divides $\gcd(w_0,\dots,w_n)$. Let $t\in \mcP(\tau)(K)=K^\times/(K^\times)^\tau$. For $x=[x_0:\cdots:x_n]\in \mcP(\w)(K)$ we define the \textbf{twist}, $x^{(t)}$ of $x$ in $\mcP(\w)$ by $t$, to be
\[
x^{(t)}\defeq [t^{w_0/\tau}x_0:\dots:t^{w_n/\tau}x_n]\in \mcP(\w)(K).
\]

\begin{theorem}\label{thm:WProjTwisted}
Let $\tau|\gcd(w_0,\dots,w_n)$ be a positive integer such that
\[
\begin{cases}
-3\tau+|\w| \geq 0 & \text{ if } \tau\geq |\w'|/e(f),\\
\tau(|\w|-|\w'|/e(f)-1)-|\w|\geq 0 & \text{ if } \tau<|\w'|/e(f).
\end{cases}
\] 
 For ideals $\a,\b\subseteq \O_K$ with $\b$ $\tau$-free we write $\a|||\b$ if $\p^{s}||\a$ implies $\p^{s}||\b$ for all $\p|\a$. Let $\kappa_f$ denote the leading coefficient from Theorem \ref{thm:WProjInfty} when only trivial local conditions are imposed, and let
\[
\kappa=\kappa_f \frac{h_K 2^{r_1+r_2}\pi^{r_2}R_K \tau^{r_1+r_2-1}}{\varpi_{K,\tau}|\Delta_K|^{1/2}\zeta_K(\tau)}.
\]
Let $\lambda(\b)$ be the following product of local densities (to be defined in the proof):
\[
\lambda(\b)\defeq \prod_{\p} m_\p( \Omega(\b)_\p^\aff\cap \O_{K,\p}^{n+1}).
\]
Let $\b=\prod \p_v^{b_v}$ be the prime factorization of $\b$. For $s\in\Q$, let
\[
\sigma(\b,s)\defeq \sum_{\a|||\b} \left(N_{K/\Q}(\sqrt{\a})^{s}\prod_{\p|\a} \frac{1}{N_{K/\Q}(\p_v^{b_v})} \prod_{\p|\b \text{ and } \p\nmid \a}\left( 1-\frac{1}{N_{K/\Q}(\p_v^{b_v})}\right)\right),
\]
where the products are over prime ideals $\p$ satisfying the given divisibility conditions.
 Then
\begin{equation}
\begin{aligned}\label{eq:WProjTwisted}
&\#\{(x,t)\in \mcP(\w')(K)\times \mcP(\tau)(K) : \Ht_{\w}(\varphi_f(x)^{(t)})\leq B\}\\
&\hspace{1.5cm}\sim
\begin{cases}
\left(\frac{\tau}{|\w'|/e(f)-\tau}\kappa\sum\limits_{\substack{\b\subseteq \O_K\\ \tau-\text{free}}} \lambda(\b) \sigma(\b,|\w'|/\tau)\right)B^{|\w'|/e(f)}  & \text{ if } \tau < |\w'|/e(f),\\
\left(\tau \kappa\sum\limits_{\substack{\b\subseteq \O_K\\ \tau-\text{free}}} \lambda(\b) \sigma(\b,\tau) \right)B^{\tau}\log(B) & \text{ if } \tau=|\w'|/e(f), \\
\left(\frac{|\w'|}{e(f)\tau-|\w'|}\kappa\sum\limits_{\substack{\b\subseteq \O_K\\ \tau-\text{free}}} \lambda(\b) \sigma(\b,\tau) \right)B^{\tau} & \text{ if } \tau > |\w'|/e(f).
\end{cases}
\end{aligned}
\end{equation}
\end{theorem}

Before proving Theorem \ref{thm:WProjTwisted}, we state the product lemma \cite[Lemma 7.1]{Den98} (similar results can be found in \cite[Lemma 3.1, Lemma 3.2]{Wan21} and \cite[Proposition 2]{FMT89}).

\begin{lemma}[Product Lemma]\label{lem:product}
Let $C_X$ and $C_Y$ be counting functions on the sets $X$ and $Y$ respectively. Suppose that
\[
F_X(B)\defeq \#\{x\in X: C_X(x)\leq B\} \sim c_X B^{\alpha_X}
\]
and 
\[
F_Y(B)\defeq \#\{y\in Y: C_Y(y)\leq B\} \sim c_Y B^{\alpha_Y}.
\]
Then
\[
F(B)\defeq \#\{(x,y)\in X\times Y: C_X(x)\cdot C_Y(y)\leq B\} \sim \begin{cases}
\frac{c_X c_Y \alpha_X}{\alpha_Y-\alpha_X} B^{\alpha_Y} & \text{ if } \alpha_X<\alpha_Y,\\
c_X c_Y \alpha_X B^{\alpha_X}\log(B) & \text{ if } \alpha_X=\alpha_Y, \\
\frac{c_X c_Y \alpha_Y}{\alpha_X-\alpha_Y} B^{\alpha_X} & \text{ if } \alpha_X>\alpha_Y.
\end{cases}
\]
\end{lemma}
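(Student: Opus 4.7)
The plan is to express $F(B)$ as a Riemann--Stieltjes integral and then reduce to an explicit one-variable integral via the given asymptotic densities, where the three cases correspond to the three regimes of convergence/divergence for that integral. Concretely, I would write
\[
F(B) = \sum_{x \in X} F_Y\!\left(\tfrac{B}{C_X(x)}\right) = \int_{0}^{\infty} F_Y(B/u)\, dF_X(u),
\]
which is supported on $u$ in the range where both $F_X(u)$ and $F_Y(B/u)$ contribute. The core step is to substitute $F_Y(T) \sim c_Y T^{\alpha_Y}$ for large $T$ and treat $dF_X(u)$ as the measure with density $c_X \alpha_X u^{\alpha_X - 1}$, reducing $F(B)$ to an explicit integral that can be evaluated in each case.

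After this substitution, the leading-order expression becomes
\[
c_X c_Y \alpha_X \int_{1}^{B} u^{\alpha_X - 1}\bigl((B/u)^{\alpha_Y} - 1\bigr)\, du = c_X c_Y \alpha_X B^{\alpha_Y}\!\! \int_{1}^{B}\! u^{\alpha_X - \alpha_Y - 1}\, du \ -\ c_X c_Y \alpha_X \int_{1}^{B}\! u^{\alpha_X - 1}\, du.
\]
The three cases follow from direct evaluation. When $\alpha_X < \alpha_Y$, the first integral converges as $B \to \infty$ to $1/(\alpha_Y - \alpha_X)$ while the second is $O(B^{\alpha_X}) = o(B^{\alpha_Y})$, yielding $\tfrac{c_X c_Y \alpha_X}{\alpha_Y - \alpha_X} B^{\alpha_Y}$. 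When $\alpha_X = \alpha_Y$ the first integral is $\log B$ and dominates, producing $c_X c_Y \alpha_X B^{\alpha_X}\log B$. When $\alpha_X > \alpha_Y$ both terms scale as $B^{\alpha_X}$, and the algebraic simplification $\alpha_X/(\alpha_X - \alpha_Y) - 1 = \alpha_Y/(\alpha_X - \alpha_Y)$ is what converts the coefficient to the stated $\tfrac{c_X c_Y \alpha_Y}{\alpha_X - \alpha_Y}$.

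The main technical obstacle, and the step where I would focus most of the work, is rigorously justifying the substitutions inside the Stieltjes integral from only the asymptotic hypotheses on $F_X$ and $F_Y$. My approach would be a dyadic decomposition of the $u$-range combined with Abel summation: on each dyadic block $[T, 2T]$ the asymptotic $F_Y(T') \sim c_Y (T')^{\alpha_Y}$ applies uniformly, and Abel summation against the smooth function $u \mapsto c_Y(B/u)^{\alpha_Y}$ converts the discrete sum against $dF_X$ into the target integral with the density $c_X \alpha_X u^{\alpha_X - 1}\, du$. Particular care is required near the endpoints $u \asymp 1$ and $u \asymp B$, where the asymptotics are weakest; there one uses the trivial bounds $F_Y(B/u) \leq F_Y(B)$ and $F_X(u) \leq F_X(B)$ to show the boundary contributions are strictly lower order than the stated main terms. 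The most delicate regime is the middle case $\alpha_X = \alpha_Y$, where the main term exceeds $B^{\alpha_X}$ only by a logarithmic factor, so the error analysis must preserve this gain.
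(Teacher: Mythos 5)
The paper itself gives no proof of this lemma --- it is quoted from Deng \cite{Den98} (cf.\ \cite{FMT90}, \cite{Wan21}) --- so the comparison here is between your argument and the statement itself, and your proposal has a genuine gap at exactly its central step. The step ``treat $dF_X(u)$ as the measure with density $c_X\alpha_X u^{\alpha_X-1}\,du$'' is the whole content of the problem, and it is only legitimate when the resulting $u$-integral diverges, i.e.\ in the case $\alpha_X=\alpha_Y$, where only the tail behaviour of $F_X$ matters and your dyadic/Abel-summation plan does yield $c_Xc_Y\alpha_X B^{\alpha_X}\log B$. When $\alpha_X<\alpha_Y$ the relevant integral $\int u^{-\alpha_Y}\,dF_X(u)$ \emph{converges}, and its value is the honest sum $\sum_{x\in X}C_X(x)^{-\alpha_Y}$, which is not determined by the single asymptotic $F_X(B)\sim c_XB^{\alpha_X}$; replacing the Stieltjes measure by a continuous density changes this constant. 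Concretely, take $X=Y=\mathbb{N}$ with $C_X(n)=n^2$ and $C_Y(n)=n$, so $c_X=c_Y=1$, $\alpha_X=1/2$, $\alpha_Y=1$: then $F(B)=\sum_{m\le\sqrt{B}}\lfloor B/m^2\rfloor\sim\zeta(2)\,B$, whereas the displayed formula predicts $\tfrac{c_Xc_Y\alpha_X}{\alpha_Y-\alpha_X}B=B$. Thus no argument can derive the unequal-exponent cases from the stated hypotheses alone: the true leading constants are $c_Y\sum_{x}C_X(x)^{-\alpha_Y}$ when $\alpha_X<\alpha_Y$ and, symmetrically, $c_X\sum_{y}C_Y(y)^{-\alpha_X}$ when $\alpha_X>\alpha_Y$; these collapse to the displayed rational expressions only in the idealized continuous model $F_X(u)=c_X(u^{\alpha_X}-1)$, $F_Y(u)=c_Y(u^{\alpha_Y}-1)$, which is precisely the model your integrand $u^{\alpha_X-1}\bigl((B/u)^{\alpha_Y}-1\bigr)$ encodes.

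A related inaccuracy is your claim that the endpoint contributions are always of strictly lower order. When $\alpha_X>\alpha_Y$, the pairs with $C_X(x)\asymp B$ and $C_Y(y)=O(1)$ account for a positive proportion of $F(B)\asymp B^{\alpha_X}$ (visible in the exact constant $c_X\sum_y C_Y(y)^{-\alpha_X}$, which is dominated by the $y$ of bounded $C_Y$-value), so the trivial bounds $F_Y(B/u)\le F_Y(B)$ and $F_X(u)\le F_X(B)$ cannot dismiss that region; in your computation it is exactly the source of the ``$-1$'' correction, and exactly where the discrete data enters and the continuous approximation fails. The salvageable parts are: (i) the case $\alpha_X=\alpha_Y$, where your decomposition, together with the $O(B^{\alpha_X})$ bound for the range $C_X(x)>B/T$, is the standard and correct proof; and (ii) the unequal cases, provided the conclusion is restated with the Dirichlet-series constants above (or stronger hypotheses are imposed). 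Note that this caveat also bears on how Lemma \ref{lem:product} is invoked with $\tau\neq|\w'|/e(f)$ in the proof of Theorem \ref{thm:WProjTwisted}.
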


\begin{proof}[Proof of Theorem \ref{thm:WProjTwisted}.]
For $x\in \varphi_f\left(\mcP({\bf{w'}})(K)\right)$ and $v\in \Val_0(K)$, let $\pi_v$ be a uniformizer at $v$ and define a non-negative integer $b_{x,v}$ as follows:
\begin{align*}
b_{x,v}\defeq \min \{b\in \{1,\dots, \tau-1\} : |x^{(\pi_v^b)}|_{{\w},v} < |x|_{{\w},v}\}, 
\end{align*}
where we define the minimum of the empty set to be zero, and where $x^{(\pi_v^b)}$ is the twist of $x$ by $\pi_v^b$ as defined in Subsection \ref{subsec:WPSheights}. Let $\b_x$ be the integral ideal
\[
\b_x=\prod_{v\in\Val_0(K)} \p_v^{b_{x,v}}.
\]
Let $\b\subseteq \O_K$ be any $\tau$-free integral ideal, i.e., the prime factorization 
\[
\b=\prod_{v\in \Val_0(K)}\p_v^{b_v}
\]
 of $\b$ is such that $b_v<\tau$ for all $v$. Let $V_\b\defeq \{v\in \Val_0(K): \p_v|\b \}$. Let $\a$ be an ideal dividing $\b$ such that there exists a subset of places $V_\a\subset \Val_0(K)$ for which
\[
\a=\prod_{v\in V_\a} \p_v^{b_v}.
\]

For each $t\in \mcP(\tau)(K)$ let $\t\subseteq \O_K$ denote the integral ideal generated by the non-zero elements $t'\in \O_K-\{0\}$ for which $[t']=t$ in $\mcP(\tau)(K)$.
For $\b$ a $\tau$-free ideal and $\a|||\b$, define the set
\[
\mcA(\b,\a)\defeq \left\{(x,t)\in \mcP({\w}')(K)\times \mcP(\tau)(K) : \b_{\varphi_f(x)}=\b,\ \p_v^{b_v}|\t\  \forall v\in V_\a,\ \p_v^{b_v}\nmid \t\ \forall v\in V_\b-V_\a\right\}.
\]

Observe that
\[
\prod_{v\in V} |\pi_v|_v^{-1}=\prod_{v\in V} [\O_{K,v}:\pi_v \O_{K,v}]=N_{K/\Q}(\sqrt{\a}),
\]
where $\sqrt{\a}$ is the radical ideal of $\a$. Then, for any $(x,t)\in \mcA(\b,\a)$, we have
\begin{align*}
\Ht_{\w}(x^{(t)})=\frac{N_{K/\Q}(\t)^{1/\tau}\Ht_{\w}(x)}{N_{K/\Q}(\sqrt{\a})}.
\end{align*}

Therefore
\begin{align*}
&\#\{(x,t)\in \mcP(\w')(K)\times \mcP(\tau)(K) : \Ht_{\w}(\varphi_f(x)^{(t)})\leq B\}\\
&\hspace{1cm}=\sum_{\substack{\b\subseteq \O_K\\ \tau\text{--free}}}\sum_{\a|||\b} \#\{(x,t)\in \mcA(\b,\a) : N_{K/\Q}(\t)^{1/\tau}\Ht_{\w}(x)\leq N_{K/\Q}(\sqrt{\a}) B\}.
\end{align*}
Our strategy will be to count each summand on the right using the product lemma. 

Let $z\in \varphi_f\left(\mcP({\w}')(K)\right)$ and let $[z_0:\cdots:z_n]$ be a representative of $z$ which is integral (i.e., $z_i\in \O_K$ for all $i$).

By the definition of $|\cdot|_{\w,v}$, the inequality $|z^{(\pi_v^m)}|_{{\w},v} < |z|_{{\w},v}$ is equivalent to 
\[
\min_j \left\{ \left\lfloor \frac{v(\pi_v^{w_jm/\tau}z_j)}{w_j}\right\rfloor\right\} > \min_j \left\{\left\lfloor \frac{v(z_j)}{w_j}\right\rfloor\right\}.
\]
Letting $s$ be an index such that $v(z_s)/w_s=\min_j\{v(z_j)/w_j\}$, it follows that
\[
b_{z,v}=\begin{cases}
0 & \text{ if } v(z_s)/w_s-\lfloor v(z_s)/w_s\rfloor < 1/\tau\\
\left\lceil \tau\left(1+\lfloor \frac{v(z_s)}{w_s}\rfloor-\frac{v(z_s)}{w_s}\right)\right\rceil & \text{ otherwise.}
\end{cases}
\]

This allows us to translate the condition $\b_z=\b=\prod_v \p_v^{b_v}$ into the following set of local conditions, 
\begin{align*}
\Omega(\b)_v\defeq \begin{cases}
\left\{x\in \mcP({\w}')(K_v): v(f_s(x))-w_s\left\lfloor \frac{v(f_s(x))}{w_s}\right\rfloor < \frac{w_s}{\tau} \right\} & \text{ if } b_v=0,\\
 \left\{x\in \mcP({\w}')(K_v): \frac{w_s(\tau-b_v)}{\tau}\leq v(f_s(x))-w_s\left\lfloor \frac{v(f_s(x))}{w_s}\right\rfloor <\frac{w_s(\tau-b_v+1)}{\tau} \right\} & \text{ if } b_v\neq 0.
\end{cases}
\end{align*}

For almost all $v$ (namely those with $b_v=0$), we have the inclusion of local conditions $\{x\in \O_{K,\p_v}^{n+1} : \mfI_\w(\varphi_f(x))_{\p_v}=\O_{K,\p_v}\}\subset \Omega(\b)_v^{\aff}$. As we saw in the proof of Theorem \ref{thm:WProjInfty}, it follows from Lemma \ref{lem:Ek-Bha} that the local conditions $\{x\in \O_{K,\p}^{n+1} : \mfI_\w(\varphi_f(x))_\p=\O_{K,\p}\}$ satisfy the limit condition (\ref{eq:WPEk}) of Theorem \ref{thm:WProjInfty}. Therefore the local conditions $\Omega(\b)$ satisfy the conditions of Theorem \ref{thm:WProjInfty}. By Theorem \ref{thm:WProjInfty} we obtain the following asymptotic:
\begin{align}\label{eq:b-count}
\#\{x\in \varphi_f\left(\mcP({\w}')(K)\right):\b_x=\b,\ \Ht_{\w}(x)\leq B\}
\sim \kappa_f \lambda(\b) B^{\frac{|\w'|}{e(f)}}. 
\end{align}

By Corollary \ref{cor:k-free}, we have that
\begin{equation}\begin{aligned}\label{eq:a-count}
&\#\{t\in \mcP(\tau)(K): \p_v^{b_v}|\t\ \forall v\in V_\a,\ \p_v^{b_v}\nmid \t\ \forall v\in V_\b-V_\a,\ N(\t)^{1/\tau}\leq B\}\\
&\hspace{1cm}\sim \frac{h_K 2^{r_1+r_2}\pi^{r_2}R_K \tau^{r_1+r_2-1}}{\varpi_{K,\tau}|\Delta_K|^{1/2}\zeta_K(\tau)} \prod_{v\in V_\a} \frac{1}{N_{K/\Q}(\p_v^{b_v})} \prod_{v\in V_\b-V_\a}\left( 1-\frac{1}{N_{K/\Q}(\p_v^{b_v})}\right)B^\tau.
\end{aligned}
\end{equation}

Applying the Product Lemma (Lemma \ref{lem:product}) with the asymptotics (\ref{eq:a-count}) and (\ref{eq:b-count}), and summing over all $\a$ and $\b$ gives the asymptotic (\ref{eq:WProjTwisted}).


We now check that the leading coefficients in (\ref{eq:WProjTwisted}) converge. Assume $\tau\geq |\w'|/e(f)$. In these cases it suffices to show that the sum
\begin{align}\label{eq:infinite_sum}
\sum\limits_{\substack{\b\subseteq \O_K\\ \tau-\text{free}}} \lambda(\b) \sigma(\b,\tau)
\end{align}
converges.

Applying a bound for the growth rate of the sum-of-divisors function (Theorem \ref{thm:bound-sum-of-divisors}) to $(\sqrt{\b})^\tau/\b$, which makes sense because $\b$ is $\tau$-free, we obtain an upper bound for $\sigma(\b,\tau)$:
\begin{align*}
\sigma(\b,\tau)
&=\sum_{\a|||\b} \left(N_{K/\Q}(\sqrt{\a})^{\tau}\prod_{\p_v|\a} \frac{1}{N_{K/\Q}(\p_v^{b_v})} \prod_{\p_v|\b \text{ and } \p_v\nmid \a}\left( 1-\frac{1}{N_{K/\Q}(\p_v^{b_v})}\right)\right)\\
&\leq \sum_{\a|||\b} \left(N_{K/\Q}(\sqrt{\a})^{\tau}\cdot \frac{1}{N_{K/\Q}(\a)} \cdot 1 \right)\\
&=\sum\limits_{\a|||\b} \frac{N_{K/\Q}(\sqrt{\a})^\tau}{N_{K/\Q}(\a)}\\
&=O\left( \frac{N_{K/\Q}(\sqrt{\b})^\tau}{N_{K/\Q}(\b)} \log\log\left(\frac{N_{K/\Q}(\sqrt{\b})^\tau}{N_{K/\Q}(\b)}\right)\right).
\end{align*}
To estimate the product of local measures $\lambda(\b)$ we first estimate the measures $m_v(\Omega(\b)_v)$ for $\p_v|\b$. From the definition of the local conditions $\Omega(\b)_v$ we obtain
\[
m_v(\Omega(\b)_v)=O\left(N_{K/\Q}(\p_v)^{\frac{-|\w|(\tau-b_v)}{\tau}}\right).
\]
Taking the product over $\p_v|\b$, we have the upper bound
\[
\lambda(\b)=O\left(\prod_{\p_v|\b}N_{K/\Q}(\p_v)^{\frac{-|\w|(\tau-b_v)}{\tau}}\right)=O\left(\left(\frac{N_{K/\Q}(\sqrt{\b})^\tau}{N_{K/\Q}(\b)}\right)^{-|\w|/\tau}\right).
\]
It follows that
\begin{align*}
\lambda(\b) \sigma(\b,\tau)
&=O\left(\left(\frac{N_{K/\Q}(\sqrt{\b})^\tau}{N_{K/\Q}(\b)}\right)^{1-|\w|/\tau}\log\log\left(\frac{N_{K/\Q}(\sqrt{\b})^\tau}{N_{K/\Q}(\b)}\right)\right)\\
&=O\left(\left(\frac{N_{K/\Q}(\sqrt{\b})^\tau}{N_{K/\Q}(\b)}\right)^{-2}\log\log\left(\frac{N_{K/\Q}(\sqrt{\b})^\tau}{N_{K/\Q}(\b)}\right)\right),
\end{align*}
where we have used the assumption that $-3\tau+|\w|\geq 0$ in obtaining the second equality. Note that the map
\begin{align*}
\{\b\subseteq \O_K: \b \text{ is } \tau\text{-free}\} &\to \{\b\subseteq \O_K: \b \text{ is } \tau\text{-free}\}\\
\b=\prod_{\p_v|\b} \p_v^{b_v}&\mapsto \frac{N_{K/\Q}(\sqrt{\b})^\tau}{N_{K/\Q}(\b)}=\prod_{\p_v|\b} \p_v^{\tau-b_v}
\end{align*}
is a bijection. Therefore
\[
\sum\limits_{\substack{\b\subseteq \O_K\\ \tau-\text{free}}} \lambda(\b) \sigma(\b,\tau)=\sum\limits_{\substack{\b\subseteq \O_K\\ \tau-\text{free}}} O\left(N_{K/\Q}(\b)^{-2}\log\log(N_{K/\Q}(\b))\right)=O(1),
\]
where the implied constant is absolute.
This shows that the sum (\ref{eq:infinite_sum}) converges. A similar argument shows that the infinite sum in the $\tau<|\w'|/e(f)$ case converges provided that 
\[
\tau(|w|-|\w'|/e(\varphi)-1)-|\w|\geq 0.
\]
\end{proof}

\begin{remark}
It is interesting to note the similarity of the proof of Theorem \ref{thm:WProjTwisted} with the work of Wang on Malle's Conjecture for $S_n\times A$ extensions \cite{Wan17,Wan21}. One of the key features of Wang's work is dealing with the discriminant of the compositum of number fields, which is roughly the product of the discriminants, but with some `defect' making the problem more difficult. The situation of our Theorem \ref{thm:WProjTwisted} is similar in that the twisted height is roughly the product of $N_{K/\Q}(\t)^{1/\tau}$ and $\Ht_\w(x)$, but with the `defect' $N_{K/\Q}(\sqrt{\a})$.
\end{remark}

We will apply Theorem \ref{thm:WProjTwisted} to modular curves isomorphic to $\mcP(2,2)$. The case of interest will be when $\tau=2$, $\w=(1,1)$, and $\w'=(4,6)$. Although $\mcP(2,2)$ is not isomorphic to  $\mcP(1,1)\times \mcP(2)$ (they have different Picard groups), there is an explicit bijection of $K$-points
\begin{salign}\label{eq:theta_bijection}
\theta:\mcP(1,1)(K)\times \mcP(2)(K) &\to \mcP(2,2)(K)\\
([x:y],t) &\mapsto [tx : ty].
\end{salign} 
Define $\theta^{-1}([tx:ty])_1 \defeq [x:y]$ and $\theta^{-1}([tx:ty])_2 \defeq t$.


For future reference, we record the following special case of Theorem \ref{thm:WProjTwisted}.

\begin{corollary}\label{cor:WProjTwisted}
 Let $\a,\b\subseteq \O_K$ be ideals with $\b$ square-free. Let $\kappa_f$ denote the leading coefficient from Theorem \ref{thm:WProjInfty} when only trivial local conditions are imposed, let
\[
\kappa=\kappa_f \frac{h_K 2^{2(r_1+r_2)}\pi^{r_2}R_K}{\varpi_{K}|\Delta_K|^{1/2}\zeta_K(2)},
\]
and let $\lambda(\b)$ be the following product of local densities (as defined in the proof of Theorem \ref{thm:WProjTwisted}):
\[
\lambda(\b)\defeq \prod_{\p} m_\p( \Omega(\b)_\p^\aff\cap \O_{K,\p}^{2}).
\]
Let
\[
\sigma(\b,2)= \sum_{\a|\b} \left(N_{K/\Q}(\sqrt{\a})^{2}\prod_{\p|\a} \frac{1}{N_{K/\Q}(\p_v)} \prod_{\p|\b \text{ and } \p\nmid \a}\left( 1-\frac{1}{N_{K/\Q}(\p_v)}\right)\right),
\]
and let $\theta$ be the bijection (\ref{eq:theta_bijection}).
 Then
\begin{equation}
\begin{aligned}
&\#\{(x\in \mcX : \Ht_{\w}\left(\varphi_f(\theta^{-1}(x)_1)^{(\theta^{-1}(x)_2)}\right)\leq B\}\\
&\hspace{1.5cm}\sim
\begin{cases}
\left(2 \kappa\sum\limits_{\substack{\b\subseteq \O_K\\ \text{square-free}}} \lambda(\b) \sigma(\b,2) \right)B^{2}\log(B) & \text{ if } e(f)=1, \\
\left(\frac{\kappa}{e(f)-1}\sum\limits_{\substack{\b\subseteq \O_K\\ \text{square-free}}} \lambda(\b) \sigma(\b,2) \right)B^{2} & \text{ if } e(f)>1.
\end{cases}
\end{aligned}
\end{equation}
\end{corollary}

\section{Counting elliptic curves}\label{sec:EC}

We now apply the results of the previous section to count elliptic curves with prescribed level structures.

\subsection{Counting elliptic curves with prescribed level structure}\label{subsec:CountingEC}

For $G$ a subgroup of $\GL_2(\Z/N\Z)$ let $N_{\GL_2(\Z/N\Z)}(G)$ denote its normalizer in $\GL_2(\Z/N\Z)$. Let $I\in \GL_2(\Z/N\Z)$ denote the identity matrix. Define
\[
r(G)\defeq\begin{cases}
[N_{\GL_2(\Z/N\Z)}(G):G] & \text{ if } -I\in G,\\
[N_{\GL_2(\Z/N\Z)}(G):G]/2 & \text{ if } -I\not\in G.
\end{cases}
\]

We will need the following lemma, which was proven by Cullinan, Kenney, and Voight \cite[Lemma 3.1.8]{CKV22}. They proved their result for elliptic curves over the rational numbers, however the proof works over arbitrary number fields.

\begin{lemma}\label{lem:fiber}
Let $G$ be a subgroup of $\GL_2(\Z/N\Z)$ and let $\psi:\mcX_G\to \mcX_{\GL_2(\Z)}$ be the morphism that forgets the level structure. Then, for any elliptic curve $E$ over a number field $K$ with automorphism group $\Aut_{\overline{K}}(E)\cong \Z/2\Z$ and which admits a $G$-level structure, there exists a subgroup $G_E\leq G$ for which
\[
\# \psi^{-1}(E)=r(G_E),
\]
where $\# \psi^{-1}(E)$ denotes the  cardinality of the fiber $\psi^{-1}(E)$.
\end{lemma}

We now prove Theorem \ref{thm:EllipticCount}.

\begin{reptheorem}[\ref{thm:EllipticCount}]
Let $G$ be a subgroup of $\GL_2(\Z/N\Z)$.
 Let $K$ be a degree $d$ number field which contains $K_G$. Suppose that for some pair of positive integers $\w'=(w'_0,w'_1)$ there is an isomorphism of $K_G$-stacks from $\mcX_G$ to $\mcP(\w')$. 
Assume that $(w_0, w_1)=1$ or $e(\varphi_G))=1$.
Then there exists an explicit constant $\kappa_G$ such that the number of isomorphism classes of elliptic curves over $K$ admitting a $G$-level structure and which have naive height less than $B$ is
\begin{align*}\label{eq:thm_statment}
\kappa_G B^{\frac{|\w'|}{12 e(G)}}+O\left(B^{\frac{d|\w'|-w'_{\min}}{12 e(G) d}}\right).
\end{align*}
Defining
\[
\kappa'=\frac{m_\infty(\mcF(1))}{r(G) \varpi_{K,\w'}|\Delta_K|},
\]
the leading coefficient can be expressed as
\[
\kappa_G=\frac{\kappa'}{\zeta_K(|\w'|)}
\sum_{i=1}^{h_K}\sum_{\d\in \mfD_f} \sum_{\n_1\in \widehat{\mfD}_f} \frac{\# V(\c_i\n_1,\d) N_{K/\Q}(\c_i)^{|\w'|} N_{K/\Q}(\d)^{\frac{|\w'|}{12 e(G)}}}{[\O_K^{2}:\Lambda(\c_i\n_1)]} \prod_{\p\in \widetilde{\mfD}_f} \frac{1}{1-N_{K/\Q}(\p)^{|\w'|}}.
\]
\end{reptheorem}

\begin{proof}
We first count isomorphism classes of elliptic curves \textit{equipped} with a $G$-level structure. To do this we apply Theorem \ref{thm:WProjFin} with $\mcP(\w')=\mcX_G$ and $\mcP(\w)=\mcP(4,6)\cong \mcX_{\GL_2(\Z)}$ and trivial local conditions $\Omega_\p=\mcP(\w')(K_\p)$ for all $\p$. Let $\psi:\mcX_G\to \mcX_{\GL_2(\Z)}$ be the map which forgets the level structure and let $\varphi:\mcP(\w')\to\mcP(4,6)$ be the corresponding map of weighted projective stacks. Note that $\varphi$ has finite defect by Proposition \ref{prop:finite-defect}.
The degree of $\varphi$ is
 \begin{equation}\label{eq:wps_degree}
 	\deg(\varphi)=e(\varphi)\frac{\lcm(4,6)}{\lcm(w'_0,w'_1)} =  e(\varphi)\frac{12}{\lcm(w'_0,w'_1)}=
 \begin{cases}
 e(\varphi) \frac{24}{w_0',w_1'} & \text{ if } \gcd(w_0',w_1')=2,\\
 	e(\varphi) \frac{12}{w_0',w_1'} & \text{ if } \gcd(w_0',w_1')=1,\\
 \end{cases}
 \end{equation}
 where the cases correspond to whether the generic residual gerbe of $\mcP(w_0', w_1')$ is $\mu_2$ (when $\gcd(w_0',w_1')=2$) or trivial (when $\gcd(w_0',w_1')=1$).
  The degree of $\psi$ is  
\begin{equation}\label{eq:modular_degree}
\deg(\psi)=
\begin{cases}
[\GL_2(\Z/N\Z):G]=[\SL_2(\Z):\Gamma_G] & \text{ if } -I\in G,\\
[\GL_2(\Z/N\Z):G]/2=[\SL_2(\Z):\Gamma_G]/2 & \text{ if } -I\notin G,
\end{cases}
\end{equation}
 where the cases correspond to whether the generic residual gerbe of $\mcX_G$ is $\mu_2$ (when $-I\in G$) or trivial (when $-I\notin G$).  As $\deg(\varphi)=\deg(\psi)$, equating (\ref{eq:wps_degree}) and (\ref{eq:modular_degree}) shows that
 \[
e(G)=e(\varphi)=\frac{w'_0 w'_1}{24}[\SL_2(\Z):\Gamma_G].
\]
 
Taking into account that the naive height on $\mcX_{\GL_2(\Z)}$ is the twelfth power of the height $\Ht_\w$ on $\mcP(4,6)$, Theorem \ref{thm:WProjFin} gives the following asymptotic for the number of isomorphism classes of elliptic curves equipped with a $G$-level structure:
\[
r(G)\kappa_G B^{\frac{|\w'|}{12 e(G)}}+O\left(B^{\frac{d|\w'|-w'_{\min}}{12 e(G) d}}\right).
\]

To refine this to a count of isomorphism classes of elliptic curves that \textit{admit} a $G$-level structure, we use Lemma \ref{lem:fiber}. For $B\in \R_{\geq 0}$ let $S(B)$ be the set of elliptic curves over $K$ which admit a $G$-level structure and are of height bounded by $B$. By Theorem \ref{thm:WProjFin} and Lemma \ref{lem:fiber}, to prove the theorem it will suffice to show that 
\begin{align}\label{eq:admit_level_structure}
 \#\{E \in S(B) : \#\psi^{-1}(E)\neq r(G)\}=o\left(B^{\frac{|\w'|}{12 e(G)}}\right). 
 \end{align}
 
If $\Aut_{\overline{K}}(E)\not\cong \Z/2\Z$, then $\Aut_{\overline{K}}(E)\in \{\Z/4\Z, \Z/6\Z\}$. Suppose $\Aut_{\overline{K}}(E)\cong \Z/6\Z$. Then $E$ has $j$-invariant zero, and can be written as $E:y^2=x^2+B$. 
By Theorem \ref{thm:WProjMorphisms}, we know that $\psi$ can be given by polynomials, say $\psi_1,\psi_2\in K[x,y]$. Note that in order for the elliptic curve 
\[
E_{a,b}: y^2 = x^3 + \psi_1(a,b) x + \psi_2(a,b)
\]
to have $j$-invariant zero, $\psi_1(a,b)$ must equal $0$. Thus, for any $m\in \Z$, each fiber $(E,\rho)\in \mcX_G$ above $E$  satisfies the local condition 
\[
\Omega_v^{\aff}(m)\defeq \{(a,b)\in K_v^2 : v(\psi_{1}(a,b))\geq m\}.
\]
These sets are measurable, have boundary of measure zero, and
\[
m_v(\Omega_v^{\aff}(m)\cap \O_{K,v}^{2})\to 0 \quad \text{ as  }\quad m\to\infty.
\]
 Therefore, by Theorem \ref{thm:WProjFin}, the number of elliptic curves with $G$-level structure and $\Aut_{\overline{K}}(E)\cong \Z/6\Z$ is $O\left(B^{\frac{d|\w'|-w'_{\min}}{12 e(G) d}}\right)$. An analogous argument gives this bound also in the case $\Aut_{\overline{K}}(E)\cong \Z/4\Z$. 

It remains to address the cases in which $\Aut_{\overline{K}}(E)\cong \Z/2\Z$ and $\#\psi^{-1}(E)\neq r(G)$. As $\mcX_G\cong \mcP(\w')$, by Theorem \ref{thm:WProjMorphisms} the morphism $\psi:\mcX_G\to \mcX_{\GL_2(\Z)}$ that forgets the level  structure can be described by weighted homogeneous polynomials. This gives a parametrization of elliptic curves with $G$-level, away from elliptic curves with extra automorphisms. However, we have already dealt with those elliptic curves with extra automorphisms in the previous paragraph. Thus, by Hilbert's Irreducibility Theorem, the set of points of $\mcX_G$ corresponding to elliptic curves which admit a $G'$-level structure for some proper subgroup $G'<G$ is thin. Then (\ref{eq:admit_level_structure}) follows from Theorem \ref{thm:WProjFin} by taking $\Omega_\infty$ to be the Zariski thin set above (so that $\kappa=0$).
\end{proof}

Let $\mcE_K(B)$ denote the set of isomorphism classes of elliptic curves over $K$ with height less than $B$. In the special case that $G=\GL_2(\Z)$ we obtain the following result using Corollary \ref{cor:WPcount-f=id}.

\begin{corollary}\label{cor:EllipticCount}
Letting 
\[
\kappa=\frac{h_K (2^{r_1+r_2}\pi^{r_2})^{n+1} R_K 10^{r_1+r_2-1}\gcd(2,\varpi_K)}{\varpi_K |\Delta_K| \zeta_K(10)},
\]
we have
\[
\#\mcE_K(B)=\kappa B^{5/6}+O(B^{5/6-1/3d}).
\]
\end{corollary}

For more special cases of Theorem \ref{thm:EllipticCount} see Table \ref{tab:EllipticCount}.

\begin{remark}
Theorem \ref{thm:WProjFin} may also be used to count elliptic curves with prescribed local conditions and level structure; giving results along the lines of \cite[Theorem 3.7 and Corollary 3.10]{CJ23b},  \cite[Theorem 1.4]{CJ23a}, and \cite[Theorem 1.1.2]{Phi25}. This is needed for applications to bounding average ranks (see \cite{CJP24+} for more on this).
\end{remark}

We now prove our main result for counting elliptic curves parametrized by a modular curve isomorphic to $\mcP(2,2)$.

\begin{reptheorem}[\ref{thm:EllipticTwistCount}]
Let $G$ be a subgroup of $\GL_2(\Z/N\Z)$ and let $K$ be a number field of degree $d$ containing $K_G$. Suppose that over $K_G$, the modular curve $\mcX_G$ is isomorphic to $\mcP(2,2)$. 
Define 
\begin{align*}
e'(G)\defeq \frac{[\SL_2(\Z):\Gamma_{G}]}{12}.
\end{align*}
Let $\kappa$ denote the constant $\kappa_f$ in Corollary \ref{cor:WProjTwisted}.
Then the number of isomorphism classes of elliptic curves over $K$ admitting a $G$-level structure and which have naive height less than $B$ is asymptotic to
\[
\begin{cases}
\left(\frac{\kappa}{6}\sum\limits_{\substack{\b\subseteq \O_K\\ \text{square-free}}}\lambda(\b) \sigma(\b,2)\right) B^{1/6}\log(B) & \text{ if } e'(G)=1,\\
\left(\frac{\kappa}{e'(G)-1}\sum\limits_{\substack{\b\subseteq \O_K\\ \text{square-free}}} \lambda(\b) \sigma(\b,2)\right) B^{1/6} & \text{ if } e'(G)>1.
\end{cases}
\]
\end{reptheorem}

\begin{proof}
Let $\psi: \mcX_G\to \mcX(1)$   be the morphism that forgets the level structure, and let $\pi$ be the degree two cover
\begin{align*}
	\pi: \mcP(1,1)&\to \mcP(2,2)\\
	[x:y] &\mapsto [x^2:y^2].
\end{align*}
Let $\varphi_f=\psi\circ \pi$. Then we have the following commutative diagram:
\begin{center}
\begin{tikzcd}
\mcP(1,1) \arrow[d, "\pi"'] \arrow[rd, "\varphi_f"]  \\
 \mcX_G\cong \mcP(2,2) \arrow[r, "\psi"]  & \mcX(1)\cong \mcP(4,6).
\end{tikzcd}
\end{center}
We now compute the degree $\varphi_f$ in two ways. 
As $\varphi_f$ is a morphism of weighted projective stacks, its degree is
\begin{align}\label{eq:deg-wps}
	\deg(\varphi_f)=e(\varphi_f) \frac{\lcm(4,6)}{\lcm(1,1)}=24 \cdot e(\varphi_f).
\end{align}
On the other hand, 
\begin{align}\label{eq:deg-modular}
	\deg(\varphi_f)=\deg(\pi)\cdot \deg(\psi)=2\cdot [\GL_2(\Z/N\Z): G]=2\cdot[\SL_2(\Z): \Gamma_G].
\end{align}
Therefore, $e(\varphi_f)=[\SL_2(\Z): \Gamma_G]/12=e'(G)$.

The desired result then follows from Corollary \ref{cor:WProjTwisted}, taking into account that the naive height on $\mcX_{\GL_2(\Z)}$ is the twelfth power of the height $\Ht_{(4,6)}$ on $\mcP(4,6)$.
\end{proof}

In Theorem \ref{thm:EllipticTwistCount}, the condition that the modular curve $\mcX_G$ is isomorphic to $\mcP(2,2)$ may seem restrictive. However, it turns out that if $\mcX_G$ is a $\mu_2$-gerbe over $\P^1$, then it must be isomorphic to $\mcP(2,2)$. We explain this below, in Proposition \ref{prop:mu_2-gerbe_K-rational} and Proposiiton \ref{prop:mu_2-gerbe_non-trivial}.

\begin{proposition}\label{prop:mu_2-gerbe_K-rational}
	Up to isomorphism, the only $\mu_2$-gerbes over $\P^1_K$ with a $K$-rational point are $\P^1\times \mcP(2)$ and $\mcP(2,2)$. 
\end{proposition}

\begin{proof}
Isomorphism classes of $\mu_2$-gerbes over $\P^1_{K}$ are classified by elements of the \'{e}tale cohomology group 
\begin{equation}
H^2(\P^1_K, \mu_2)\cong H^2(\P^1_{\overline{K}}, \mu_2) \oplus H^2(K,\mu_2) \cong \Z/2\Z \oplus \Br(K)[2],
\end{equation}
where $H^2(\P^1_{\overline{K}}, \mu_2)\cong \Pic(\P^1_{\overline{K}})/2\Pic(\P^1_{\overline{K}})\cong \Z/2\Z$ is generated by the class of $\mathcal{O}(1)$, and where
$\Br(K)[2]$ is the $2$-torsion in the Brauer group of $K$. Let \[
\alpha=(n,\beta)\in H^2(\P^1_K, \mu_2)\cong  \Z/2\Z \oplus \Br(K)[2],
\]
 and let 
$\pi:\mcG\to \P^1_K$ be the $\mu_2$-gerbe over $\P^1_K$ corresponding to $\alpha$. The condition that $\mcG$ has a $K$ rational point is equivalent to the existence of a section $s:\Spec(K)\to \mcG$ of $\pi$ restricted to the fiber $\mcG_{\pi(s)}$. In terms of cohomology, the existence of such a section is equivalent to the vanishing of the pullback class $(\pi\circ s)^{\ast}(\alpha)\in \Br(K)[2]$. We evaluate the pullback on the two components of $\alpha=(n,\beta)$:
\begin{itemize}
	\item The map $(\pi\circ s)^\ast:\Pic(\P^1_K)\to \Pic(K)$ is the zero map because the Picard group of any field vanishes, $\Pic(K)=0$.
	\item The map $(\pi\circ s)^\ast: \Br(K)[2]\to \Br(K)[2]$ is the identity map because $\pi\circ s:\Spec(K) \to \P^1_K$ is a section of the structure morphism $\P^1_K\to \Spec(K)$. 
\end{itemize}
Therefore $(\pi\circ s)^\ast(\alpha)=\beta$, which vanishes exactly when $\beta=0$. This leaves the two cases $\alpha\in \{(0,0), (1,0)\}$. 
\begin{itemize}
\item If $\alpha=(0,0)$, then $\mcG$ is the trivial $\mu_2$-gerbe over $\P^1_K$, which is isomorphic to $\P^1_K\times \mcP(2)$.
\item If $\alpha=(1,0)$, then $\mcG$ is the gerbe of square roots of $\mathcal{O}(1)$, which is isomorphic to the weighted projective stack $\mcP(2,2)$.
\end{itemize}
\end{proof}

The next proposition shows that any modular curve with generic $\mu_2$ stabilizer is a non-trivial $\mu_2$ gerbe.

\begin{proposition}\label{prop:mu_2-gerbe_non-trivial}
	Let $G\subseteq \GL_2(Z/N\Z)$ be a subgroup containing the negative identity matrix, and let $\mcC\defeq\mcX_G\mathbin{\!\!\pmb{\fatslash}} \mu_2$ denote the rigidification\footnote{See, e.g., \cite[Appendix C]{AGV08} for details about rigidification.} along the generic $\mu_2$ stabilizer. Then $\mcX_G$ is not isomorphic to $\mcC\times \mcP(2)$.
\end{proposition}

\begin{proof}
		For contradiction, suppose that $\mcX_G$ is isomorphic to $\mcC\times \mcP(2)$, the trivial $\mu_2$-gerbe over $\mcC$. Then there is a section 
	\begin{equation}
		s: \mcC \to \mcX_G
	\end{equation}
splitting the gerbe. Let $\mcE$ denote the stacky, universal, elliptic curve over $\mcX_G$. Pulling back along the section $s$ gives a family of elliptic curves $E\defeq s^\ast \mcE\to \mcC$, with the property that the pullback of $E$ along the gerbe map $\varphi:\mcX_G\to \mcC$ recovers $\mcE$, i.e., $\varphi^\ast E = \mcE$. However, for a geometric point $x\in \mcX$ corresponding to an elliptic curve $E_x$ with $G$-level structure, the stabilizer $\mu_2$ acts non-trivially on $E_x$ via the usual elliptic involution. It follows that the universal elliptic curve $\mcE$ cannot descend along $\varphi$, contradicting that $\mcX_G$ is isomorphic to $\mcC\times \mcP(2)$.
\end{proof}

Proposition \ref{prop:mu_2-gerbe_K-rational} and Proposition \ref{prop:mu_2-gerbe_non-trivial} together imply that any modular curve isomorphic to a $\mu_2$ gerbe over $\P^1$ must be isomorphic to $\mcP(2,2)$.

\begin{remark}\label{rem:X0(3)}
As mentioned in the introduction, the modular curve $\mcX_{G_0(3)}$ is isomorphic to the weighted projective stack $\mcP(2,6)$. However, Theorem \ref{thm:EllipticCount} does not apply since 
\[
e(G_0(3))=\frac{2\cdot 6}{24} [\SL_2(\Z): \Gamma_0(3)] = 2,
\]
so that $\gcd(w'_0,e(G_0))=\gcd(2,2)=2\neq 1$. If Theorem \ref{thm:EllipticCount} were to apply, then it would give an asymptotic growth rate of $B^{1/3}$, which does not agree with the growth rate of $B^{1/2}$ given in \cite{PPV20}. This illustrates the necessity of assuming the morphism forgetting the level structure has finite defect.

 Similarly, a naive modification of Theorem \ref{thm:EllipticTwistCount} would allow one to relate the asymptotic in the case of $\mcX_{G_0(3)}$ to the asymptotic in the case $\mcX_{G_1(3)}$ (which we can handle using Theorem \ref{thm:EllipticCount}). However, this also leads to the incorrect growth rate of $B^{1/3}$. 
 
 A key difference between the $\mcX_{G_0(3)}$ case and the cases we have considered in this article is that the leading term of the asymptotic in the $\mcX_{G_0(3)}$ case is only accounting for ellipic curves with $j$-invariant zero. From the view of the modular curve, this means that the count is being dominated by the point of $\mcX_{G_0(3)}$ with $\mu_6$-stabilizer (i.e., the order $3$ elliptic point). In contrast, none of the cases considered in this article have an \textit{accumulating substack}.
\end{remark}


\appendix

\section{Bounding the Sum of Divisors Function Over Number Fields}

In this appendix we give an estimate for the sum of divisors function over an arbitrary number field. We adapt the classical argument (see, e.g., \cite[Theorem 6.28]{LeV77}).

Let $K$ be a number field.
 For $\a\subset \O_K$ an ideal with prime factorization $\a=\p_1^{a_1}\dots \p_j^{a_j}$ define the function
\[
\varphi_K(\a)\defeq \prod_{i=1}^j \#(\O_K/\p^{a_i})^\times,
\]
which is an analog of Euler's totient function.

One can show that $\varphi_K$ satisfies the product formula
\begin{align}\label{eq:PhiProductFormula}
\varphi_K(\a)=N_K(\a)\prod_{\p|\a} \left(1-\frac{1}{N_{K/\Q}(\p)}\right)
\end{align}
(see, e.g., \cite{APR20}).

Define the sum of divisors function of an ideal $\a\subset \O_K$ as follows:
\begin{align*}
\sigma_K(\a)\defeq \sum_{\d|\a} N_{K/\Q}(\d),
\end{align*}
where the sum ranges over all divisors $\d$ of $\a$.


\begin{proposition}
Let $\a=\prod_{i=1}^j \p_i^{a_i}$. The function $\sigma_K$ satisfies the following product formula:
\[
\sigma_K(\a)=N_{K/\Q}(\a)\prod_{i=1}^j \frac{1-N_{K/\Q}(\p_i)^{-(a_i+1)}}{1-N_{K/\Q}(\p_i)^{-1}}.
\]
\end{proposition}

\begin{proof}
First we show that $\sigma_K$ is multiplicative. Let $\a_1,\a_2\subset \O_K$ be relatively prime ideals (i.e., if $\p|\a_1$ then $\p\nmid\a_2$). From the multiplicativity of the norm $N_{K/\Q}$, we have that
\begin{align*}
\sigma_K(\a_1\a_2)&=\sum_{\d|\a_1\a_2}N_{K/\Q}(\d)\\
&=\sum_{\substack{\d_1|\a_1 \\ \d_2|\a_2}} N_{K/\Q}(\d_1\d_2)\\
&=\sum_{\d_1|\a_1} N_{K/\Q}(\d_1)\sum_{\d_2|\a_2} N_{K/\Q}(\d_2)\\
&=\sigma_K(\a_1)\cdot\sigma_K(\a_2).
\end{align*}
In particular, 
\[
\sigma_K(\a)=\sum_{i=1}^j \sigma_K(\p_i^{a_i}).
\]
We see that
\begin{align*}
\sigma_K(\p^a)&=\sum_{\d|\p^a} N_{K/\Q}(\d)\\
&=\sum_{i=0}^{a} N_{K/\Q}(\p)^{i}\\
&=\frac{N_{K/\Q}(\p)^{a+1}-1}{N_{K/\Q}(\p)-1}\\
&=N_{K/\Q}(\p)^{a}\frac{1-N_{K/\Q}(\p)^{-(a+1)}}{1-N_{K/\Q}(\p)^{-1}}.
\end{align*}
The desired result now follows by summing over the prime ideals dividing $\a$.
\end{proof}

The following is a generalization, due to Rosen\cite[Lemma 1.1]{Ros99}, of a (weak version) of a result of Mertens \cite{Mer74}.

\begin{lemma}\label{lem:KMertens}
Let $K$ be a number field and let $B\in \R_{>0}$ be a positive constant. Then, as a function of $B$,
\[
\sum_{N_{K/\Q}(\p)\leq B} \frac{1}{N_{K/\Q}(\p)}=\log\log(B)+O(1).
\] 
\end{lemma}

We now give a lower bound for $\varphi_K(\a)$.

\begin{lemma}\label{lem:PhiLowerBound}
For $K$ a number field we have 
\[
\varphi_K(\a)\gg \frac{N_{K/\Q}(\a)}{\log\log(N_{K/\Q}(\a))}.
\]
\end{lemma}

\begin{proof}
From the product formula (\ref{eq:PhiProductFormula}) for $\varphi_K$, we have that
\[
\frac{\varphi_K(\a)}{N_{K/\Q}(\a)}=\prod_{\p|\a} \left(1-\frac{1}{N_{K/\Q}(\p)}\right).
\] 
Thus
\begin{align*}
\log\left(\frac{\varphi_K(\p)}{N_{K/\Q}(\p)}\right)&=\sum_{\p|\a} \log\left(1-\frac{1}{N_{K/\Q}(\p))}\right)\\
&=-\sum_{\p|\a} \frac{1}{N_{K/\Q}(\p)} + \sum_{\p|\a}\left(\frac{1}{N_{K/\Q}(\p)}+\log\left(1-\frac{1}{N_{K/\Q}(\p)}\right)\right).
\end{align*}
For the second sum we have
\begin{align*}
\sum_{\p|\a}\left(\frac{1}{N_{K/\Q}(\p)}+\log\left(1-\frac{1}{N_{K/\Q}(\p)}\right)\right)
&>\sum_{\p|\a}\left(\frac{1}{N_{K/\Q}(\p)}-\frac{1}{N_{K/\Q}(\p)-1}\right)\\
&=-\sum_{\p|\a} \frac{1}{N_{K/\Q}(\p)^2-N_{K/\Q}(\p)}\\
& >-\sum_{N_{K/\Q}(\p)\leq N_{K/\Q}(\a)}\frac{1}{N_{K/\Q}(\p)^2-N_{K/\Q}(\p)}.
\end{align*}
Note that this final sum converges an $N_{K/\Q}(\a)\to\infty$, and therefore 
\[
\log\left(\frac{\varphi_K(\a)}{N_{K/\Q}(\a)}\right)>-\sum_{\p|\a} \frac{1}{N_{K/\Q}(\p)}+O(1).
\]

Partition the set of primes $\p$ dividing $\a$ into the sets
\[
S_1\defeq \{\p|\a : N_{K/\Q}(\p)<\log(N_{K/\Q}(\a))\}
\]
and
\[
S_2\defeq \{\p|\a : N_{K/\Q}(\p)\geq\log(N_{K/\Q}(\a))\}
\]
so that
\[
\sum_{\p|\a} \frac{1}{N_{K/\Q}(\p)}=\sum_{\p\in S_1} \frac{1}{N_{K/\Q}(\p)}+\sum_{\p\in S_2} \frac{1}{N_{K/\Q}(\p)}.
\]
Using Lemma \ref{lem:KMertens} (and the fact that $N_{K/\Q}(\p)\leq \log(N_{K/\Q}(\a))$ for all $\p\in S_1$) we have 
\[
\sum_{\p\in S_1}\frac{1}{N_{K/\Q}(\p)}<\log\log\log(N_{K/\Q}(\a))+O(1).
\]
Since $\log(N_{K/\Q}(\a))\leq N_{K/\Q}(\p)$ for each $\p\in S_2$, we find that
\[
\log(N_{K/\Q}(\a))^{\# S_2}\leq \prod_{\p\in S_2} N_{K/\Q}(\p)\leq N_{K/\Q}(\a).
\]
Taking logarithms and isolating $\# S_2$, we obtain the bound
\[
\# S_2\leq \frac{\log(N_{K/\Q}(\a))}{\log\log(N_{K/\Q}(\a))}.
\]
Therefore
\[
\sum_{\p\in S_2} \frac{1}{N_{K/\Q}(\p)}\leq \sum_{\p\in S_2} \frac{1}{\log(N_{K/\Q}(\a))}=\frac{\# S_2}{\log(N_{K/\Q}(\a))}\leq \frac{1}{\log\log(N_{K/\Q}(\a))}=o(1).
\]
It now follows that
\[
\log\left(\frac{\varphi_K(\a)}{N_{K/\Q}(\a)}\right)>-\sum_{\p|\a} \frac{1}{N_{K/\Q}(\a)}+O(1)\geq -\log\log\log(N_{K/\Q}(\a))+O(1),
\]
and thus
\[
\frac{\varphi_K(\a)}{N_{K/\Q}(\a)}\gg \frac{1}{\log\log(N_{K/\Q}(\a))}.
\]
\end{proof}

We are now ready to give an upper bound for he sum of divisors function.

\begin{theorem}\label{thm:bound-sum-of-divisors}
For $K$ a number field we have that
\[
\sigma_K(\a)\ll N_{K/\Q}(\a)\log\log(N_{K/\Q}(\a)).
\]
\end{theorem}

\begin{proof}
Let $\a=\prod_{i=1}^j \p_i^{a_i}$. Using the product formulas for $\sigma_K$ and $\varphi_K$, we find that
\begin{align*}
\sigma_K(\a)\cdot \varphi_K(\a)&=N_{K/\Q}(\a)\prod_{i=1}^j \frac{1-N_{K/\Q}(\p_i)^{-(a_i+1)}}{1-N_{K/\Q}(\p_i)^{-1}} N_{K/\Q}(\a)\prod_{i=1}^j \left(1-\frac{1}{N_{K/\Q}(\p_i)}\right)\\
&=N_{K/\Q}(\a)^2 \prod_{i=1}^j \left(1-\frac{1}{N_{K/\Q}(\p_i)^{a_i+1}}\right)\\
&<N_{K/\Q}(\a)^2.
\end{align*}
Therefore $\sigma_K(\a)/N_{K/\Q}(\a)<N_{K/\Q}(\a)/\varphi_K(\a)$, so that by Lemma \ref{lem:PhiLowerBound} we have
\[
\frac{\sigma_K(\a)}{N_{K/\Q}(\a)}\ll \log\log(N_{K/\Q}(\a)).
\]
\end{proof}

\bibliographystyle{alpha}
\bibliography{bibfile}

\end{document}